\newtheorem{thm}{Theorem}[section]
\newtheorem{lem}[thm]{Lemma}
\newtheorem{prop}[thm]{Proposition}
\newtheorem{claim}[thm]{Claim}
\newtheorem{quest}[thm]{Question}
\theoremstyle{remark}
\newtheorem{rem}[thm]{Remark}
\theoremstyle{definition}
\newcommand{\norm}[1]{\left\Vert#1\right\Vert}
\newcommand{\abs}[1]{\left\vert#1\right\vert}
\newcommand{\set}[1]{\left\{#1\right\}}
\newcommand{\R}{\mathbb{R}}
\newcommand{\C}{\mathbb{C}}
\newcommand{\N}{\mathbb{N}}
\newcommand{\Z}{\mathbb{Z}}
\renewcommand{\dim}{\mathbf{dim}}
\newcommand{\hdim}{\dim_\mathrm{H}}
\newcommand{\loc}{\mathrm{loc}}
\newcommand{\len}{\mathbf{len}}
\newcommand{\dist}{\mathbf{dist}}
\newcommand{\eqdef}{\overset{\mathrm{def}}=}
\newcommand{\diam}{\mathbf{diam}}
\begin{document}

\title{Newhouse Phenomena in the Fibonacci Trace Map}

\author[W. Yessen]{William Yessen}
\thanks{The author was supported in part by the National Science Foundation Postdoctoral Research Fellowship DMS-1304287}
\address{Department of Mathematics, Rice University, Houston, TX 77005}
\email{yessen@rice.edu}

\subjclass[2010]{Primary: 37D05, 37D20, 37D45.}
\keywords{Homoclinic tangencies, the Newhouse phenomenon, trace map, standard map, mixed behavior.}

\date{\today}

\begin{abstract}

We study dynamical properties of the Fibonacci trace map - a polynomial map that is related to numerous problems in geometry, algebra, analysis, mathematical physics, and number theory. Persistent homoclinic tangencies, stochastic sea of full Hausdorff dimension, infinitely many elliptic islands - all the conservative Newhouse phenomena are obtained for many values of the Fricke-Vogt invariant. The map has all the essential properties that were obtained previously for the Taylor-Chirikov standard map, and can be suggested as another candidate for the simplest conservative system with highly non-trivial dynamics.

\end{abstract}

\maketitle

\tableofcontents

\section{Introduction}\label{sec:intro}

In this paper we study the dynamics of the \emph{Fibonacci trace map}
\begin{equation}\label{e.tracemap}
T:\mathbb{R}^3\to \mathbb{R}^3, \ \ T(x, y, z)=(2xy-z, x, y).
\end{equation}
This map appears naturally in representation theory and geometry \cite{Cantat2009,Goldman2009,Roberts1994b}, analysis and mathematical physics (see \cite{Damanik2013,Mei2014,Baake1999} for surveys of old and recent results on applications in aperiodic lattice models, \cite{Romanelli2009,Sutherland1986,Baake1999} for quantum kicked systems and quantum walks, and \cite{Cantat2009,Iwasaki2007,Cantat2009b} for applications to Painlev\'e VI equation), and number theory (see \cite{Bowditch1998,Sasaki2008} for applications to Markov triples). Let us briefly describe some of these connections, and set the stage for our main result.

Consider the free group $\mathrm{F}_2 \eqdef \langle \alpha, \beta \rangle$. Let $\mathrm{Rep(\mathrm{F}_2})$ denote the set of representations of $\mathrm{F}_2$ into $\mathrm{SL}(2,\C)$. The group $\mathrm{SL}(2,\C)$ acts on $\mathrm{Rep}(\mathrm{F}_2)$ by conjugation, and for all $\rho \in \mathrm{Rep}(\mathrm{F}_2)$, the traces
\begin{align*}
x\eqdef \mathrm{Tr}(\rho(\alpha)), \hspace{5mm} y\eqdef \mathrm{Tr}(\rho(\beta)),\hspace{5mm} z \eqdef \mathrm{Tr}(\rho(\alpha \beta))
\end{align*}
are preserved under the action. We obtain a bijective correspondence $\rho \mapsto (x, y, z)$, $\rho \in \mathrm{Rep}(\mathrm{F}_2)$, between the algebraic quotient $\mathrm{Rep}(\mathrm{F}_2)/\mathrm{SL}(2,\C)$ (where $\mathrm{SL}(2, \C)$ acts by conjugation) and $\C^3$ (for details, see, for example, \cite{Goldman2009}). Now if $\mathrm{Aut}(\mathrm{F}_2)$ acts on $\mathrm{Rep}(\mathrm{F}_2)$ by composition, then this induces an action on $\C^3$ by polynomial diffeomorphisms (see \cite{Roberts1994b} and references therein for further details). These diffeomorphisms are called \emph{trace maps}.

It turns out that the trace of the commutator $[\rho(\alpha), \rho(\beta)]$ is invariant under the associated trace map and this leads to the {\it Fricke-Vogt invariant}\footnote{The Fricke-Vogt invariant appeared first more than a century ago in the results of Fricke-Klein and Vogt in a similar context; see \cite{Fricke1897,Fricke1896,Vogt1889} for the original works, and \cite{Goldman2009} for a modern exposition.}, the cubic polynomial
\begin{equation}\label{e.FV}
I(x, y, z)=x^2+y^2+z^2-2xyz-1.
\end{equation}
Therefore it is natural to consider the cubic level surfaces
\begin{align*}
S_V=\{(x, y, z)\ |\ I(x, y, z)=V\}
\end{align*}
that are invariant under the trace maps.

By studying the dynamics of trace maps, Cantat in \cite{Cantat2009} was able to give a detailed description of the dynamics of mapping class groups of the once punctured torus and the four punctured sphere. For example, in the (simpler) case of the once punctured torus, $\mathbb{T}_1$, we have $\pi_1(\mathbb{T}_1) = \mathrm{F}_2$ (the loop around the puncture corresponds to the commutator $[\alpha, \beta]$). Then the action of $\mathrm{Aut}(\mathrm{F}_2)$ on $\mathrm{Rep}(\mathrm{F}_2)$ induces an action of the mapping class group $\mathrm{MCG}^*(\mathbb{T}_1)$ on $\C^3$. Using the invariance of $S_V$, for each fixed $V\in \C$ we obtain a morphism from $\mathrm{MCG}^*(\mathbb{T}_1)$ to the group of polynomial diffeomorphisms on $S_V$ (see also \cite{Goldman2006,Goldman2003}).

In analysis, trace maps have been useful in the study of the Painlev\'e VI equation (see, for example, \cite{Cantat2009,Iwasaki2007}).

In physical applications, trace maps arise as renormalization maps in the spectral theory and the statistical mechanics of one-dimensional quasicrystal models, quasiperiodically kicked systems, and quantum walks. In this context, one-dimensional quasicrystals are modeled by endomorphisms (and, more often, automorphisms) of $\mathrm{F}_2$ (for a short exposition, see, for example, \cite{Baake1999}). For example, the widely studied case is given by $s\in \mathrm{Aut}(\mathrm{F}_2)$, $s(a) = ab$, $s(b) = a$, which leads to the trace map \eqref{e.tracemap}. Then a one-dimensional quasicrystal consisting of two "atoms" is represented by the infinite sequence obtained from the infinite repeated application of $s$:
\begin{align*}
s^n(a) = \underbrace{s\circ\cdots\circ s}_{n\text{ times}}(a), \hspace{2mm} n\rightarrow \infty.
\end{align*}
Given a model, it turns out that there exists a representation
\begin{align*}
\rho: \mathrm{F}_2 \rightarrow \mathrm{SL}(2, \C),
\end{align*}
such that much of the relevant physical information about the model is encapsulated in the traces of the matrices $\rho(s^n(a))$, $n\in\N$. Moreover, one is often interested in the limits as $n\rightarrow\infty$, which comes down to the dynamics of
\begin{align*}
T_V \eqdef T|_{S_V}
\end{align*}
for various values of $V\in \mathbb{K}$ ($\mathbb{K} = \R$ or $\mathbb{K} = \C$ depending on the model in question). This method has yielded great progress in the past few years (see \cite{Casdagli1986,Suto1987,Kohmoto1983,Ostlund1983} for the classical results and \cite{Yessen2011a,Yessen2012a,Yessen2011,Mei2013,Mei2014,Damanik2009c,Damanik2010,Damanik2008,Damanik2012,Damanik2013a,Damanik2013b,Damanik2009,Damanik2014e,Damanik2013X,Fillman2015x,DFG2014} and references therein for the more recent contributions).

The Fibonacci trace map belongs to a class of trace maps displaying the richest dynamics (see \cite{Cantat2009}). This class consists of trace maps associated to those automorphisms $s\in\mathrm{Aut}(F_2)$ that satisfy the condition that for some $k\in\N$, both "words" $s^k(a)$ and $s^k(b)$ contain the "letters" $a$ and $b$. In turns out that the dynamics of the trace maps from this class is qualitatively the same. Perhaps for historical reasons (see \cite{Kohmoto1983,Ostlund1983,Casdagli1986}) authors often choose the Fibonacci trace map as a representative example of this class.

In many physically motivated problems only maps $\{T_V\}$ that correspond to real and positive values  of the Fricke-Vogt invariant  are relevant (although in \cite{Yessen2012a} complex $V$ had to be considered in connection with some models in statistical mechanics). For example, in many of the aforementioned papers by Damanik et. al. where the Fibonacci trace map \eqref{e.tracemap} is used to study the spectral properties of the Fibonacci Hamiltonian (one of the most prominent one-dimensional aperiodic models), the map $T_{\frac{\lambda^2}{4}}$ is "responsible" for the spectral properties of the operator with the coupling constant $\lambda$.

The dynamics of the maps $T_V$ for $V\ge 0$ is completely understood. It is known that the set $\Lambda_V$ of bounded orbits (in forward and backward time) of $T_V$ for $V>0$ coincides with the nonwandering set and is a totally disconnected topologically mixing compact locally maximal hyperbolic set \cite{Casdagli1986,Cantat2009,Damanik2009}. As a consequence, points that do not escape to infinity in forward (backward) time are precisely those that form the stable (unstable) lamination. All other points escape to infinity either in forward or backward time (or both) depending whether they lie on the unstable but not stable, or stable but not unstable (or neither) lamination, respectively. The escape rate of points is superexponential, and escape regions have also been studied \cite{Roberts1996}. The topological entropy
\begin{align*}
h_{top}(T_V)=\log\frac{\sqrt{5}+1}{2}
\end{align*}
is the same for all $V\ge 0$ \cite{Cantat2009}. Also, it is known that
\begin{equation}\label{e.dimH}
\lim_{V\to 0+}\hdim\, \Lambda_V=2, \ \ \text{see \cite{Damanik2010}}, \ \ \text{and}
\end{equation}
\begin{equation}\label{e.dimH1}
\lim_{V\to +\infty}(\log V)\cdot \hdim\, \Lambda_V=4\log(1+\sqrt{2}), \ \ \text{see \cite{Damanik2008}}.
\end{equation}
For $V=0$ the corresponding surface $S_0$, called the \emph{Cayley cubic}, has four conic singularities,
 \begin{align}\label{eq:singularities}
  P_1 = (1,1,1),\hspace{2mm} P_2 = (-1,-1,1),\hspace{2mm} P_3 = (1,-1,-1),\hspace{2mm} P_4 = (-1,1,-1),
 \end{align}
and is smooth in other points. For $V > 0$, the surface deforms into a topological sphere with four punctures (see Figure \ref{fig:fvplots} (b)-(c)).

For $V \leq 0$, define
\begin{align}\label{eq:center-part}
 \mathbb{S}_V \eqdef S_V\cap [-1, 1]^3.
\end{align}
$\mathbb{S}_V$ is nonempty for $V \geq -1$, and is invariant under $T_V$. The map $T_0$ restricted to the central part of the Cayley cubic, $\mathbb{S}_0$, is a pseudo-Anosov transformation, and is a factor of the linear automorphism of the torus generated by the hyperbolic map
\begin{align}\label{eq:anosov}
\mathcal{A} = \begin{pmatrix}
1 & 1 \\
1 & 0
\end{pmatrix}.
\end{align}
The factor map is given by
\begin{align}\label{eq:factor}
\mathcal{F}(\theta, \phi) = (\cos 2\pi(\theta + \phi), \cos 2\pi\theta, \cos 2\pi\phi).
\end{align}

\begin{figure}[t!]
\subfigure[$V < 0$]{
\includegraphics[scale=0.22]{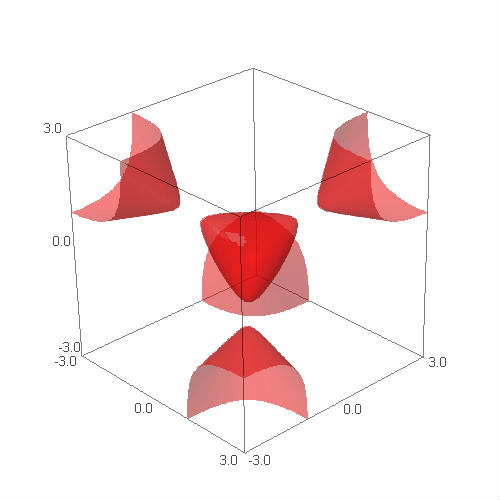}
}
\subfigure[$V = 0$]{
\includegraphics[scale=0.22]{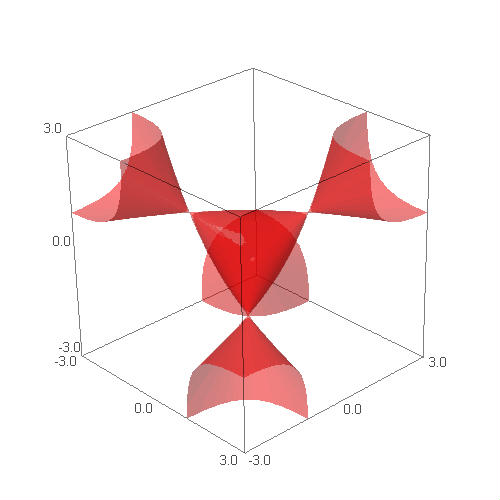}
}
\subfigure[$V > 0$]{
\includegraphics[scale=0.22]{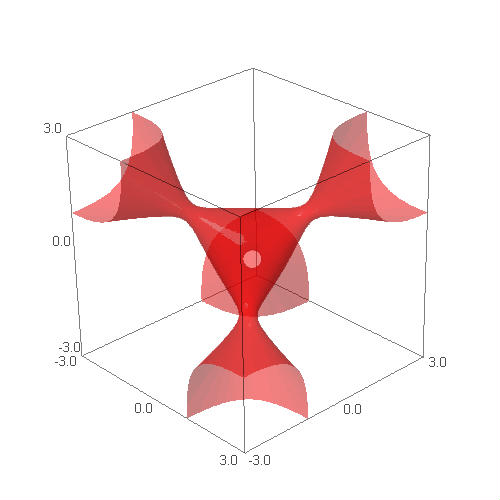}
}
\caption{Plots of the Fricke-Vogt invariant surfaces for a few values of $V$.}\label{fig:fvplots}
\end{figure}

For $V\in (-1,0)$ the surface $S_V$ consists of five connected components (see Figure \ref{fig:fvplots} (a)), one of them, $\mathbb{S}_V$ as defined in \eqref{eq:center-part}, is diffeomorphic to the two-dimensional sphere, and is invariant under the map $T_V$, and four others are non-compact, diffeomorfic to a disc. At $V = -1$, the bounded component degenerates into the point at the origin, and for $V < -1$, the bounded component disappeares completely.

Dynamics of the trace map on the unbounded components is trivial: every orbit escapes to infinity under both positive and negative iterates (it should be mentioned, however, that a famous open problem on Markov triples---the so-called {\it Frobenious Conjecture}---can be reformulated in terms of properties of some orbits of a semigroup generated by some trace map acting on the unbounded components of $S_V$; see \cite{Bowditch1998,Sasaki2008} for details).

In this paper we study the dynamics of the map $T_V$ restricted to the compact component $\mathbb{S}_V$, $V\in (-1, 0)$. This regime appears naturally in the models related to quantum walks in aperiodic media and quantum aperiodically kicked double rotators (see, for example, \cite{Romanelli2009,Sutherland1986} and a short review in \cite[Section 5]{Baake1999})\footnote{Kicked rotators are also sometimes called \emph{quantum standard maps} as they relate to the Chirikov standard map, discussed below, and were first studied in \cite{Casati1979}.}.

Notice that $T$ preserves the Euclidean volume. Then for any ${V}_1, {V}_2 \in (-1, 0)$ with ${V}_1 < {V}_2$, $T$ restricted to $\bigcup_{V \in ({V}_1, {V}_2)}\mathbb{S}_V$ also preserves the Euclidean volume, from which we see that for all $V\in (-1, 0)$, there exists an analytic area form on $\mathbb{S}_V$ that is preserved by $T_V$ and is analytic in $V$. Thus all the questions typically asked about (families of) low-dimensional conservative (and, in our case, analytic) systems can be asked about $T_V$.

Periodic orbits of the Fibonacci trace map $T_V$ (and other trace maps) for $V < 0$ were studied in \cite{Humphries2007,Humphries2015}. In the physics literature relating to the dynamics of $T_V$, $V\in (-1, 0)$ as it connects to the aforementioned models (\cite{Romanelli2009} and references therein, \cite{Sutherland1986}), the results are numeric and suggest a \emph{mixing} phenomenon similar to the one observed in the well known \emph{standard map} (which we discuss in more detail below).

\subsection{Statement of the main result (Theorem \ref{thm:main})}\label{sec:mainresult}
In this paper we prove

\begin{figure}[ht]
\subfigure[$V = -0.95$]{
\includegraphics[scale=0.28]{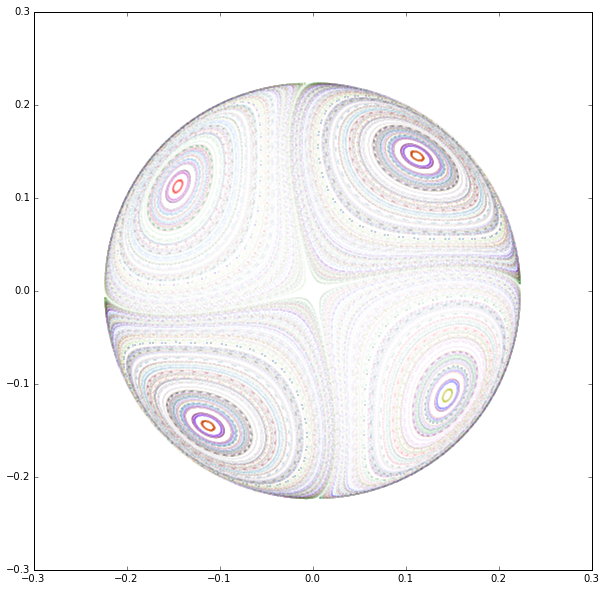}
}
\subfigure[$V = -0.7$]{
\includegraphics[scale=0.28]{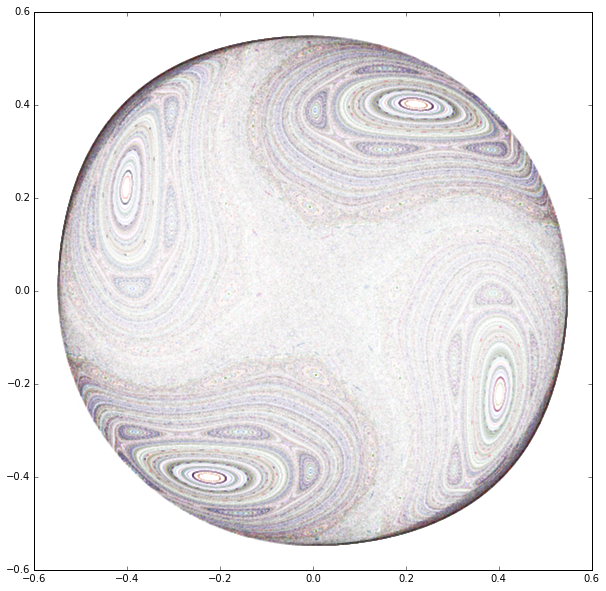}
}
\subfigure[$V = -0.5$]{
\includegraphics[scale=0.28]{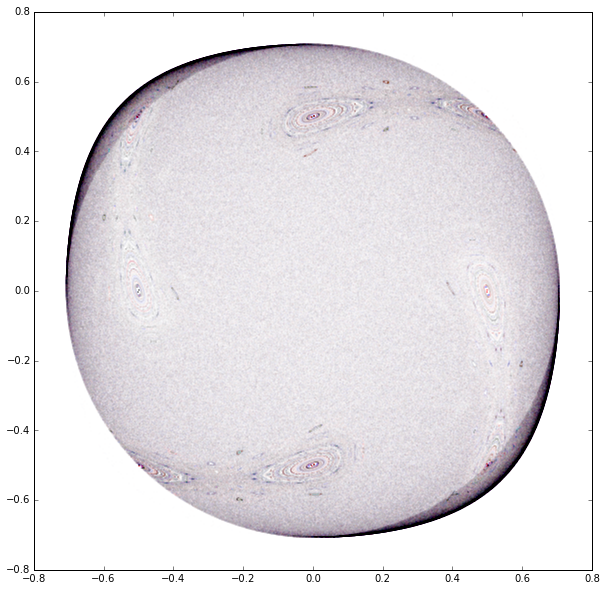}
}
\subfigure[$V = -0.2$]{
\includegraphics[scale=0.28]{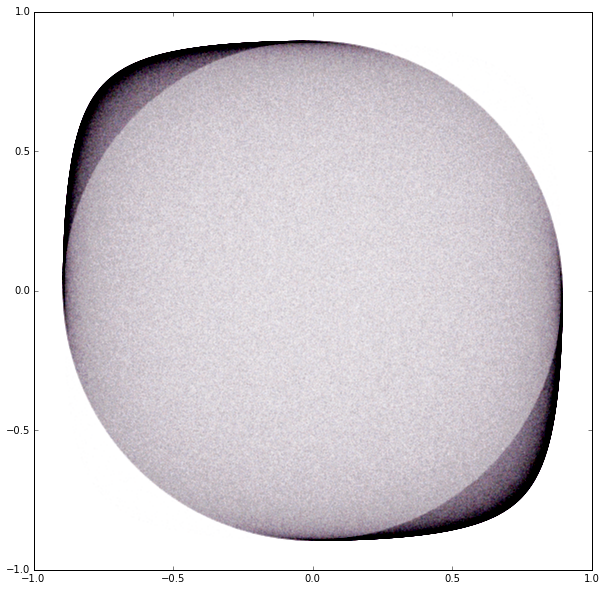}
}
\caption{Poincar\'e sections showing orbits of the Fibonacci trace map restricted to $\mathbb{S}_V$ for various values of $V < 0$. }\label{fig:tmap}
\end{figure}

\begin{thm}\label{thm:main}
 There exists $V_0 \in (-1, 0)$ such that for all $V \in (V_0, 0)$, the map $T_V$ has a locally maximal compact hyperbolic set $\Lambda_V$ in $\mathbb{S}_V$ with the following properties.
 \begin{enumerate}\itemsep0.5em
  \item The sequence $\set{\Lambda_V}$ is dynamically monotone; that is, for $V_2 > V_1$, $\Lambda_{V_2}$ contains the continuation of $\Lambda_{V_1}$.

  \item The Hausdorff distance from $\Lambda_V$ to $\mathbb{S}_0$ tends to zero as $V$ tends to zero.

  \item The Hausdorff dimension of $\Lambda_V$ tends to two as $V$ tends to zero.

  \item $\Lambda_V$ exhibits quadratic homoclinic tangencies that unfold generically with the parameter $V$.

  Moreover, there exists a residual set $\mathcal{R}\subset (V_0, 0)$ such that for all $V\in \mathcal{R}$ the following properties hold.

  \item The map $T_V$ has a nested sequence of invariant totally disconnected  hyperbolic sets $\Lambda_V^{(0)}\subseteq \Lambda_V^{(1)}\subseteq\cdots$, with $\Lambda_V^{(0)} = \Lambda_V$, and the Hausdorff dimension of $\Lambda_V^{(n)}$ tends to two as $n$ tends to infinity.

  \item The set $\Omega_V = \overline{\bigcup_{n\in\N} \Lambda_V^{(n)}}$ is a transitive invariant set of $T_V$ whose Hausdorff dimension is equal to two.

  \item Each point of $\Omega_V$ is accumulated by elliptic islands of $T_V$.
 \end{enumerate}
\end{thm}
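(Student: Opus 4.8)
The plan is to connect the map $T_0$ restricted to the central part $\mathbb{S}_0$ of the Cayley cubic to the hyperbolic toral automorphism $\mathcal{A}$ via the factor map $\mathcal{F}$, and then to treat $\{T_V\}_{V\in(-1,0)}$ as a perturbation of a pseudo-Anosov (hence uniformly hyperbolic, modulo singularities) map in the analytic conservative category. First I would understand the singular structure: $T_0$ on $\mathbb{S}_0$ is a factor of $\mathcal{A}$ under $\mathcal{F}(\theta,\phi)=(\cos 2\pi(\theta+\phi),\cos 2\pi\theta,\cos 2\pi\phi)$, and $\mathbb{S}_0$ has the four conic singularities $P_1,\dots,P_4$ coming from the four fixed points of $-\mathrm{Id}$ on the torus. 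Away from neighborhoods of these singularities, $T_0$ is smooth and uniformly hyperbolic, so one can select a large compact horseshoe-type locally maximal hyperbolic subset $\Lambda_0\subset \mathbb{S}_0$ (the coded orbit closure avoiding a small disk around each $P_i$) whose Hausdorff dimension is close to $2$; as the excluded disks shrink, $\hdim \Lambda_0 \to \hdim \mathbb{S}_0 = 2$. The hyperbolic continuation theorem then produces, for $V$ in a one-sided neighborhood $(V_0,0)$ of $0$, a locally maximal hyperbolic set $\Lambda_V\subset\mathbb{S}_V$ depending continuously (indeed analytically) on $V$, with $\Lambda_V\to\Lambda_0$ in Hausdorff distance and $\hdim\Lambda_V\to 2$; dynamical monotonicity (item 1) follows from choosing a nested exhaustion by subshifts of finite type and using the canonical continuation of each compact piece. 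Items 2 and 3 are then immediate from $\Lambda_0$ approximating $\mathbb{S}_0$ arbitrarily well.

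The analytic backbone of items 4--7 is the Newhouse--Duarte machinery for conservative surface maps. For item 4 I would exhibit a quadratic homoclinic tangency: because $T_0$ is a factor of $\mathcal{A}$, its invariant laminations are the images under $\mathcal{F}$ of the linear foliations; these have transverse intersections generically, but near the singular points $P_i$ the factor map $\mathcal{F}$ folds, and I would show that for a suitable hyperbolic periodic point $p_V$ of $T_V$ (continuing a periodic point of $T_0$) the stable and unstable manifolds develop a quadratic tangency as $V$ crosses a specific value, with the tangency unfolding at nonzero speed in $V$ — this is checked by a transversality-of-jets computation using the analyticity in $V$ and the explicit form of $T$. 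The genericity of unfolding then lets me invoke Duarte's theorems (the area-preserving analogues of Newhouse's results, e.g. ``Abundance of elliptic isles'' and ``Plenty of elliptic islands''): in the conservative category, a homoclinic tangency associated with a thick hyperbolic set yields, for a residual set of parameters $\mathcal{R}\subset(V_0,0)$, persistent tangencies, a nested sequence of hyperbolic sets $\Lambda_V^{(n)}$ whose thickness (and hence Hausdorff dimension) tends to the maximum $2$, a transitive ``stochastic sea'' $\Omega_V=\overline{\bigcup_n\Lambda_V^{(n)}}$ of full dimension $2$, and infinitely many elliptic islands accumulating on every point of $\Omega_V$. Items 5, 6, 7 are precisely these conclusions once the hypotheses (thickness bounded below, persistent quadratic tangency, area preservation, analyticity) are verified.

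The key technical points requiring care are: (a) the singularities of $\mathbb{S}_0$ — one must confirm the hyperbolic set $\Lambda_0$ really does avoid them and that the conservative perturbation theory applies on the smooth part, which is fine since $\mathbb{S}_V$ for $V<0$ is genuinely smooth (the singularities only appear at $V=0$), so $T_V$ is an analytic area-preserving diffeomorphism of the smooth sphere $\mathbb{S}_V$; (b) establishing that the relevant hyperbolic sets are ``thick'' in Duarte's sense, i.e. that their stable and unstable thicknesses can be taken $\to\infty$, which follows from the dimension $\to 2$ statement together with the bounded-distortion / conformal-like structure inherited from the near-linear model $\mathcal{A}$; and (c) verifying the generic unfolding of the tangency, which is the one genuinely map-specific computation and where the explicit polynomial form $T(x,y,z)=(2xy-z,x,y)$ and the parametrization of $\mathbb{S}_V$ must be used.

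I expect the main obstacle to be item (c) combined with locating the tangency: one must find an honest homoclinic tangency for $T_{V_\ast}$ at some $V_\ast$ near $0$ and prove it is quadratic and unfolds generically. The factor map $\mathcal{F}$ is the natural source — its critical locus (where $\sin 2\pi\theta$ or $\sin 2\pi\phi$ vanishes) is exactly where the pushed-forward laminations can be tangent — so the strategy is to track a heteroclinic/homoclinic point of $\mathcal{A}$ whose $\mathcal{F}$-image lies on this critical locus, show the resulting tangency of $W^s(p_0)$ and $W^u(p_0)$ for $T_0$ is quadratic, and then use the analytic dependence on $V$ (via the area form analytic in $V$ and the continuation of $p_V$) to get a nonzero unfolding velocity; if the velocity vanishes at $V=0$ one perturbs to a nearby $V_\ast$. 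Once a single generically-unfolding quadratic tangency associated to a thick hyperbolic set is in hand, the remainder is a direct citation of Duarte's conservative Newhouse theory, so the real work is concentrated in the hyperbolic continuation with dimension control and in this tangency computation.
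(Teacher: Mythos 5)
The decisive gap is in item (4), at the step you yourself flag as the main obstacle: the existence and location of the tangency. Your mechanism --- find a homoclinic point of $\mathcal{A}$ whose $\mathcal{F}$-image lies on the ``critical locus where $\sin 2\pi\theta$ or $\sin 2\pi\phi$ vanishes,'' obtain a tangency of $W^s(p_0)$ and $W^u(p_0)$ for $T_0$, and continue it in $V$ --- does not work. The critical set of $\mathcal{F}$ consists only of the four points $(\theta,\phi)\in\{0,\tfrac12\}^2$ (the preimages of the conic singularities), not of those curves; away from the singularities $\mathcal{F}$ is a local diffeomorphism, so the stable and unstable laminations of $T_0$ on $\mathbb{S}_0$ are uniformly transversal and $T_0$ has \emph{no} homoclinic tangency to continue --- and your $\Lambda_0$ was built precisely to avoid neighborhoods of the singular points. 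In the paper the tangencies exist only for $V<0$ and are created by the passage of the smooth surface $\mathbb{S}_V$ around the cone point $P_1$: the center-stable and center-unstable manifolds of the normally hyperbolic period-two curve through $P_1$ are quadratically tangent to $\mathbb{S}_0$ along the strong-stable/strong-unstable manifolds, so unstable arcs carried through this region fold relative to the stable foliation. Even granting the folding, the existence of an honest tangency between the laminations --- for \emph{every} $V\in(V_0,0)$, which is what item (4) asserts, not just at an isolated $V_\ast$ --- is not obtained by a transversality-of-jets computation: the paper constructs stable and unstable foliations, shows their mutual tangencies form a $C^1$ curve transversal to both, intersects that curve with the two laminations to get two Cantor sets of large thickness, and applies Newhouse's gap lemma to force them to meet; quadraticity and nonzero unfolding speed then require the curvature-growth and bounded-distortion estimates near the singularity, not analyticity in $V$ alone (nor does ``perturb to a nearby $V_\ast$'' repair a degenerate unfolding inside a fixed one-parameter family).

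A second, related gap is your point (b): thickness tending to infinity does \emph{not} follow from $\hdim\to 2$; the implication runs the other way, via $\hdim \geq \log 2/\log(2+1/\tau)$, and large Hausdorff dimension gives no lower bound on thickness, so the gap-lemma step has no foundation unless thickness is controlled directly at the construction stage (Markov partitions whose elements covering the singularity preimages shrink, making gaps small relative to bridges) and then transported to $\mathbb{S}_V$ by $C^2$-continuity of thickness --- which is also why the paper tracks thickness rather than dimension in proving (1)--(3), where your outline otherwise matches the paper's. Finally, for (5)--(7), Duarte's conservative Newhouse results alone do not give transitivity of $\Omega_V$, $\hdim\Omega_V=2$, and accumulation of elliptic islands at every point of $\Omega_V$; the paper gets these from Gorodetski's homoclinic-class refinement, so that step needs the stronger citation or an additional argument.
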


Plots of some orbits of $T_V$ for various values of $V$ are given in Figure \ref{fig:tmap}. It is apparent that for values of $V$ close to $-1$, most of the phase space is laminated by invariant circles. As $V$ increases, elliptic islands survive, but the \emph{stochastic sea} (i.e. the set of orbits with positive Lyapunov exponent) "floods" the phase space. In Figure \ref{fig:tmap} (d) it looks like the chaotic sea fills most of the phase space completely; this is not the case, as Theorem \ref{thm:main} (7) asserts, but its Hausdorff dimension indeed increases to 2, and it indeed approaches $\mathbb{S}_0$ in the Hausdorff metric.

\subsection{Comparison with the Taylor-Chirikov standard family}\label{sec:connections}

It turns out that the dynamical properties of the family of maps $\{T_V\}_{V\in (-1,0)}$ are extremely close to the dynamics of the famous standard family (see \cite{Duarte1994,Gorodetski2012} and references therein). The main open problem in the modern  low-dimensional conservative dynamics is existence of the {\it mixed behavior} for analytic symplectic maps (see, for example, Section 6 in \cite{Pesin2010} and notice also the reference to automorphisms of $K3$ surfaces introduced by McMullen in \cite{McMullen2002} as another set of promising candidates). Mixed behavior means that together with a set of positive measure filled by the invariant families of elliptic curves (existence of this set in many cases is guarantied by the KAM theory) the stochastic sea also has positive measure. The classical and most famous specific system in this context is the standard family given by
 \begin{align}\label{eq:standard-map}
  f_k(x,y) = (x + y + k\sin(2\pi x), y + k\sin(2\pi x))\mathrm{~mod~} \Z^2.
 \end{align}
 It was suggested by Chirikov \cite{Chirikov1971} (see also \cite{Chirikov1979} for an extended exposition and \cite{Izraelev1980,Shepelyansky1995} for further discussion on the underlying physics) and Taylor\footnote{The British physicist J. B. Taylor was among the first to study the model, but the reports were never published. The first occurance of the term "Chirikov-Taylor map" traces back to \cite{Lichtenberg1992}. "Taylor-Chirikov map" is also a name given often to the standard map.} in the 1960s as a description of some phenomena in plasma physics, and also due to relations with the Frenkel-Kontorova model \cite{Kontorova1938}. Sinai conjectured that $f_k$ has positive measure-theoretic entropy for many values of the parameter $k>0$ \cite{Sinai1994}; due to Pesin's theory \cite{Pesin1977}, this is equivalent to the conjecture that $f_k$ exhibits mixed behavior.

In \cite{Gorodetski2012}, Gorodetski proved the following theorem

\begin{thm}[Gorodetski 2012]\label{thm:Gor1}
There exists $k_0 > 0$ and a residual set $\mathcal{R} \subset [k_0, \infty)$ such that for every $k\in\mathcal{R}$ there exists an infinite sequence of transitive locally maximal hyperbolic sets of the map $f_k$,
\begin{align*}
\Lambda_k^{(0)} \subseteq \Lambda_k^{(1)} \subseteq \cdots \subseteq \Lambda_k^{(n)}\subseteq\cdots
\end{align*}
that has the following properties:

\begin{enumerate}

\item The family of sets $\set{\Lambda_k^{(0)}}_{k\geq k_0}$ is dynamically increasing: for small $\epsilon > 0$, $\Lambda_{k+\epsilon}^{(0)}$ contains the continuation of $\Lambda_k^{(0)}$ at parameter $k + \epsilon$;

\item The set $\Lambda_k^{(0)}$ is $\delta_k$-dense in $\mathbb{T}^2$ for $\delta_k = \frac{4}{k^{1/3}}$;

\item The Hausdorff dimension $\hdim(\Lambda_k^{(n)})\rightarrow 2$ as $n\rightarrow \infty$;

\item $\Omega_k \eqdef \overline{\cup_{n\in\N}\Lambda_k^{(n)}}$ is a transitive invariant set of the map $f_k$, and $\hdim(\Omega_k) = 2$;

\item For any $x\in\Omega_k$, $k\in \mathcal{R}$, and any $\epsilon > 0$, the Hausdorff dimension
\begin{align*}
\hdim(B_\epsilon(x))\cap \Omega_k = \hdim(\Omega_k) = 2,
\end{align*}
where $B_\epsilon(x)$ is the open ball of radius $\epsilon$ centered at $x$;

\item Each point of $\Omega_k$ is an accumulation point of elliptic islands of the map $f_k$.

\end{enumerate}

\end{thm}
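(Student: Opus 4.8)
\textbf{Proof proposal for Theorem \ref{thm:Gor1} (as stated, for the standard map $f_k$).}

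The plan is to exploit the well-understood hyperbolic structure that emerges in the standard family in the large-parameter regime and then feed it into the abstract Newhouse-type machinery for area-preserving surface diffeomorphisms. First I would recall the classical anti-integrable (or "pre-limit") analysis of $f_k$: after the rescaling $y \mapsto y/k$ one sees that, for $k$ large, $f_k$ acts near the two fixed points $x = 0, 1/2$ as a strongly hyperbolic map, and more importantly that there is a locally maximal hyperbolic horseshoe $\Lambda_k^{(0)}$ whose symbolic dynamics persists for all $k \ge k_0$. The key quantitative input is Duarte's work (the reference \cite{Duarte1994} already cited in the paper): for $k$ large there exists such a horseshoe that is transitive, locally maximal, varies continuously (indeed $C^1$-continuously, hence admits analytic continuation $\Lambda_{k+\epsilon}^{(0)}$ of $\Lambda_k^{(0)}$), and whose stable/unstable Cantor sets have Hausdorff dimension close to $1$, with the dimension tending to $1$ as $k \to \infty$, so $\hdim \Lambda_k^{(0)} \to 2$. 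Duarte also establishes that this horseshoe becomes $\delta_k$-dense in $\mathbb{T}^2$ with $\delta_k$ of order $k^{-1/3}$; tracking his constants yields the bound $\delta_k = 4 k^{-1/3}$ of item (2). This gives items (1), (2) and the $n=0$ instance of the dimension in (3) essentially from the literature.

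Next I would build the nested tower. The mechanism is that a large hyperbolic set of a two-dimensional area-preserving map with stable/unstable dimensions close to $1$ necessarily exhibits homoclinic tangencies — this is the conservative Newhouse phenomenon; Duarte proved that for $k$ large $f_k$ has persistent homoclinic tangencies associated with $\Lambda_k^{(0)}$, and these tangencies are quadratic and unfold generically as $k$ varies. The iterative step is: given $\Lambda_k^{(n)}$ (with $\hdim$ close to $2$ but not equal), pick a tangency between its stable and unstable laminations; the return map near the tangency, after renormalization, is close to a Hénon-like map (Mora–Viana / Palis–Takens / Duarte in the conservative category), so for a large-measure (in fact open dense in the parameter, for well-chosen perturbation within the family) set of parameters near $k$ one obtains a new horseshoe $\widehat\Lambda$ of dimension arbitrarily close to $2$; enlarge $\Lambda_k^{(n)} \cup \widehat\Lambda$ to a locally maximal hyperbolic set $\Lambda_k^{(n+1)} \supseteq \Lambda_k^{(n)}$. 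Doing this along a countable dense set of "tangency parameters" and intersecting the countably many residual conditions produces the residual set $\mathcal{R} \subset [k_0,\infty)$ on which the full nested sequence exists with $\hdim \Lambda_k^{(n)} \to 2$, giving (3). Then $\Omega_k = \overline{\bigcup_n \Lambda_k^{(n)}}$ is $f_k$-invariant, and transitivity follows because each $\Lambda_k^{(n)}$ is transitive and they are nested with dense union inside $\Omega_k$; $\hdim \Omega_k \ge \sup_n \hdim \Lambda_k^{(n)} = 2$ and trivially $\le 2$, so (4) holds. Item (5), local fullness of dimension on $\Omega_k$, follows by arranging (which Gorodetski does, and which one must check carefully) that the newly created horseshoes in the tower are placed so as to become dense in $\Omega_k$ while still having dimension tending to $2$ — concretely, for every ball $B_\epsilon(x)$ with $x \in \Omega_k$, some $\Lambda_k^{(n)}$ meets $B_\epsilon(x)$ in a piece whose dimension is as close to $2$ as we like. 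Item (6), accumulation by elliptic islands, comes from the generic unfolding of the quadratic tangencies: a generic one-parameter unfolding of a conservative homoclinic tangency creates, for parameters in an open dense subset of intervals shrinking to the tangency parameter, elliptic periodic points (Newhouse, Duarte), and by the density arrangement these islands accumulate on every point of $\Omega_k$; this is again folded into the residual set $\mathcal{R}$.

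The organizational subtlety — and the step I expect to be the main obstacle — is the simultaneous bookkeeping of the three infinite families of genericity conditions (for the dimension growth, for the density of the tower inside $\Omega_k$, and for the elliptic-island accumulation) so that their intersection is still residual, \emph{and} the verification that each renormalized return map near a tangency remains close enough to the conservative Hénon family that the quantitative dimension estimates (dimension $\to 2$) actually go through in the area-preserving category, where one cannot shrink dimensions freely. The honest thing to say is that essentially all of these ingredients are due to Duarte for the standard family specifically and to Gorodetski for the abstract packaging; our contribution here is only to assemble them into the stated list, so the "proof" is largely a citation-and-assembly argument, with the one genuinely delicate point being the uniformity of constants (e.g. the $4 k^{-1/3}$ in (2)) which one extracts by inspecting Duarte's estimates. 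I would therefore present this theorem as a consequence of \cite{Duarte1994,Gorodetski2012} with a sketch of how the seven items line up, reserving the novel work for the trace-map analogue treated in the rest of the paper.
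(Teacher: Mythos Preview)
The paper does not prove Theorem~\ref{thm:Gor1}: it is stated there purely as a quoted result from \cite{Gorodetski2012} (with the $\Lambda_k^{(0)}$ input attributed to Duarte \cite{Duarte1994}), and no argument is given beyond the citation. Your proposal correctly identifies this --- you end by saying the theorem should be presented as a consequence of \cite{Duarte1994,Gorodetski2012} with the novel work reserved for the trace-map analogue --- and your sketch of the Duarte/Gorodetski mechanism (large-$k$ horseshoe, persistent tangencies, renormalization to conservative H\'enon-like maps, nested tower via countably many residual conditions, elliptic islands from generic unfoldings) is an accurate outline of how that original proof goes. So there is nothing to compare: your ``proof'' and the paper's ``proof'' agree in that both defer to the cited sources.
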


The family of hyperbolic sets $\set{\Lambda_k^{(0)}}$ that satisfies properties (1) and (2) was constructed by Duarte in \cite{Duarte1994}. He also showed that
\begin{align*}
\hdim(\Lambda_k^{(0)})\rightarrow 2\hspace{2mm}\text{ as }\hspace{2mm} k\rightarrow\infty,
\end{align*}
and that for topologically generic parameters the set $\Lambda_k^{(0)}$ is accumulated by elliptic islands. For an open set of parameters, Gorodetski's theorem provides invariant hyperbolic sets of Hausdorff dimension arbitrarily close to 2. Namely, the following theorem was also proved in \cite{Gorodetski2012}.

\begin{thm}[Gorodetski 2012]\label{thm:Gor2}
There exists $k_0 > 0$ such that for any $\xi > 0$ there exists an open and dense subset $U\subset [k_0, \infty)$ such that for every $k\in U$ the map $f_k$ has an invariant locally maximal hyperbolic set of Hausdorff dimension greater than $2 - \xi$ which is also $\delta_k$-dense in $\mathbb{T}^2$ for $\delta_k = \frac{4}{k^{1/3}}$.
\end{thm}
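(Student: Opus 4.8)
The plan is to derive the statement from Theorem~\ref{thm:Gor1} by a soft persistence-and-Baire-category argument; the analytic depth (renormalization near persistent homoclinic tangencies producing ``thick'' horseshoes) is already packed into Theorem~\ref{thm:Gor1} and Duarte's construction. Let $k_0$ and the residual set $\mathcal{R}\subset[k_0,\infty)$ be as in Theorem~\ref{thm:Gor1}; the same $k_0$ will serve here. Fix $\xi>0$. For each $k_*\in\mathcal{R}$, property~(3) of Theorem~\ref{thm:Gor1} provides an index $n=n(k_*,\xi)$ with $\hdim\big(\Lambda_{k_*}^{(n)}\big)>2-\xi/2$; put $\Gamma_{k_*}\eqdef\Lambda_{k_*}^{(n)}$, a basic set of $f_{k_*}$ which, by property~(2) and the inclusion $\Lambda_{k_*}^{(0)}\subseteq\Lambda_{k_*}^{(n)}$, contains the $\delta_{k_*}$-dense set $\Lambda_{k_*}^{(0)}$.

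Next I would fix, for each such $k_*$, an isolating open neighborhood $W_{k_*}$ of $\Gamma_{k_*}$ (so that $\Gamma_{k_*}=\bigcap_{m\in\Z}f_{k_*}^m(W_{k_*})$) and pass to the hyperbolic continuation $\Gamma_{k'}\eqdef\bigcap_{m\in\Z}f_{k'}^m(W_{k_*})$ for $k'$ in a small interval $V_{k_*}\ni k_*$. On $V_{k_*}$ I need two facts. \emph{(i)}~Since $k\mapsto f_k$ is real-analytic and the Hausdorff dimension of a basic set of a $C^\infty$ surface diffeomorphism depends continuously on the diffeomorphism, $\hdim(\Gamma_{k'})>2-\xi$ for $k'$ sufficiently close to $k_*$; the $\xi/2$ slack in the choice of $n$ is exactly what absorbs the continuity error. \emph{(ii)}~The set $W_{k_*}$ is in particular an open neighborhood of $\Lambda_{k_*}^{(0)}$, and Duarte's set $\Lambda_{k'}^{(0)}$ depends continuously on $k'$ in the Hausdorff metric; hence $\Lambda_{k'}^{(0)}\subset W_{k_*}$ for $k'$ near $k_*$, and being $f_{k'}$-invariant it lies in $\Gamma_{k'}$, which is therefore $\delta_{k'}$-dense because $\Lambda_{k'}^{(0)}$ is (property~(2) at parameter $k'$, with $\delta_{k'}=\frac{4}{(k')^{1/3}}$). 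Shrinking $V_{k_*}$ if needed, (i) and (ii) both hold on all of $V_{k_*}$.

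Finally, set $U\eqdef\bigcup_{k_*\in\mathcal{R}}V_{k_*}$. This is open by construction and contains $\mathcal{R}$, which is residual and hence dense in the Baire space $[k_0,\infty)$; thus $U$ is open and dense. Given any $k'\in U$, pick $k_*\in\mathcal{R}$ with $k'\in V_{k_*}$: then $\Gamma_{k'}$ is an invariant locally maximal hyperbolic set of $f_{k'}$ of Hausdorff dimension $>2-\xi$ that is $\delta_{k'}$-dense, as required.

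The main obstacle I anticipate is step~(i): one must keep the Hausdorff dimension of the hyperbolic continuation above $2-\xi$ on an entire parameter neighborhood, which relies on the continuity of $\hdim$ for basic sets of surface diffeomorphisms. Everything else is a formal consequence of local maximality (which supplies both the continuation and its persistent $\delta_{k'}$-density) together with the Baire property of $[k_0,\infty)$.
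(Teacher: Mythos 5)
First, a point of comparison: the paper does not prove Theorem \ref{thm:Gor2} at all --- it is quoted from \cite{Gorodetski2012}, where it is obtained from the same homoclinic-tangency/renormalization machinery as Theorem \ref{thm:Gor1} rather than deduced from it. So your proposal is necessarily a different route: you try to get the open--dense statement as a soft corollary of the residual-set statement via structural stability, continuity of Hausdorff dimension for basic sets of surface diffeomorphisms, and Baire. The ingredients in step~(i) are indeed standard and correct (continuations of locally maximal hyperbolic sets exist, their Hausdorff dimension varies continuously with the map on surfaces, and a residual subset of $[k_0,\infty)$ is dense), so the dimension half of the conclusion does persist on a parameter neighborhood of each $k_*\in\mathcal{R}$.

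The genuine gap is step~(ii), i.e. the $\delta_{k'}$-density on a \emph{two-sided} neighborhood of $k_*$. You assert that Duarte's family $k\mapsto\Lambda_k^{(0)}$ is continuous in the Hausdorff metric and use this to place $\Lambda_{k'}^{(0)}$ inside the isolating neighborhood $W_{k_*}$, hence inside $\Gamma_{k'}$. Neither Theorem \ref{thm:Gor1} nor Duarte's theorem as quoted here gives such continuity: what is available is only that the family is \emph{dynamically increasing}, i.e. $\Lambda_{k'}^{(0)}$ contains the continuation of $\Lambda_{k_*}^{(0)}$ for $k'>k_*$. That rules out collapse but not sudden growth: $\Lambda_{k'}^{(0)}$ may acquire new pieces far from $\Lambda_{k_*}^{(0)}$ (and the family must grow substantially as $k$ increases, since each $\Lambda_k^{(0)}$ is a nowhere dense Cantor set while $\delta_k\to 0$; in Duarte's construction new pieces appear at discrete parameter values, exactly the kind of jump that breaks Hausdorff continuity), so the inclusion $\Lambda_{k'}^{(0)}\subset W_{k_*}$ is unjustified. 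Without it, all your construction yields inside $\Gamma_{k'}$ is the continuation of $\Lambda_{k_*}^{(0)}$, which is Hausdorff-close to $\Lambda_{k_*}^{(0)}$ and hence only (approximately) $\delta_{k_*}$-dense; for $k'>k_*$ one needs $\delta_{k'}$-density with $\delta_{k'}<\delta_{k_*}$, and the quoted statements contain no quantitative slack in the constant $4$ to absorb the difference. As written, the argument therefore gives the required density only for $k'\le k_*$, i.e. on a non-open set of parameters, and the openness of $U$ is not established. Closing the gap requires either a quantitative (locally uniform in $k$) margin in Duarte's density estimate, or producing at each nearby parameter $k'$ a single locally maximal hyperbolic set that contains a $\delta_{k'}$-dense subset at that same parameter --- and that is precisely the non-soft part of the work carried out in \cite{Gorodetski2012}.
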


\begin{figure}[ht]
\subfigure[$k = 0.4$]{
\includegraphics[scale=0.28]{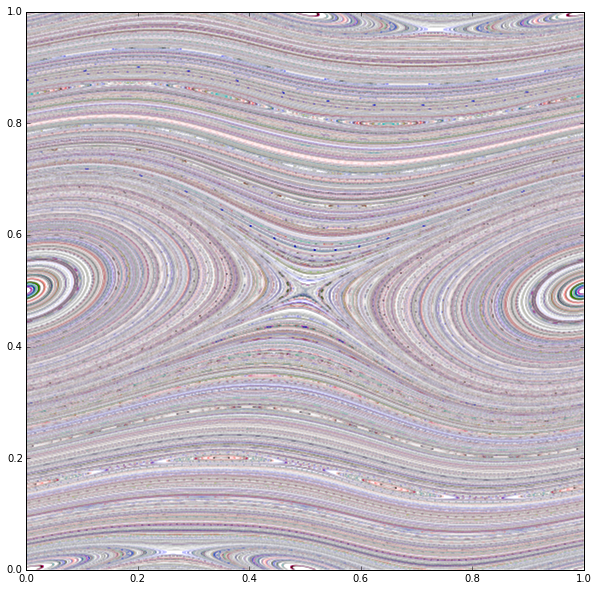}
}
\subfigure[$k = 0.8$]{
\includegraphics[scale=0.28]{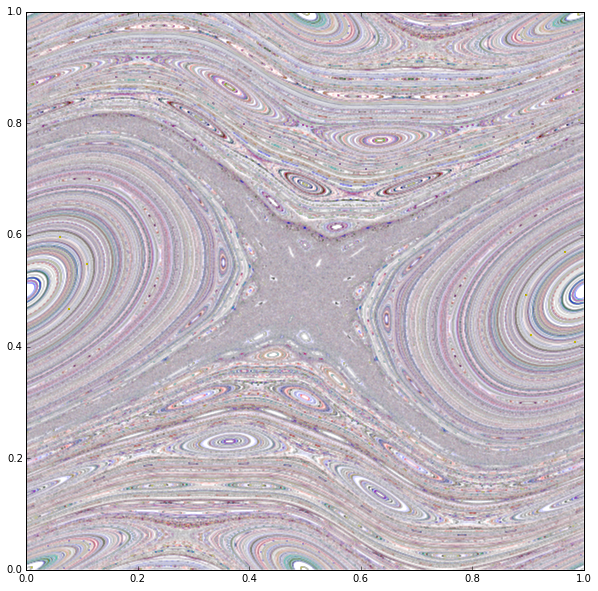}
}
\subfigure[$k = 1.5$]{
\includegraphics[scale=0.28]{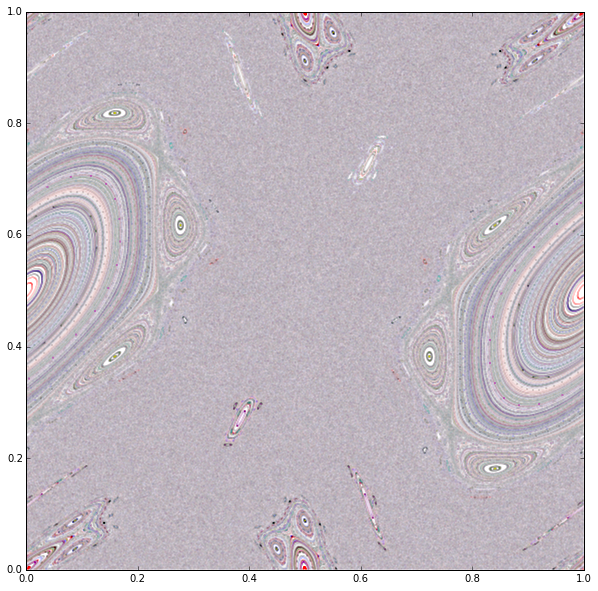}
}
\subfigure[$k = 5$]{
\includegraphics[scale=0.28]{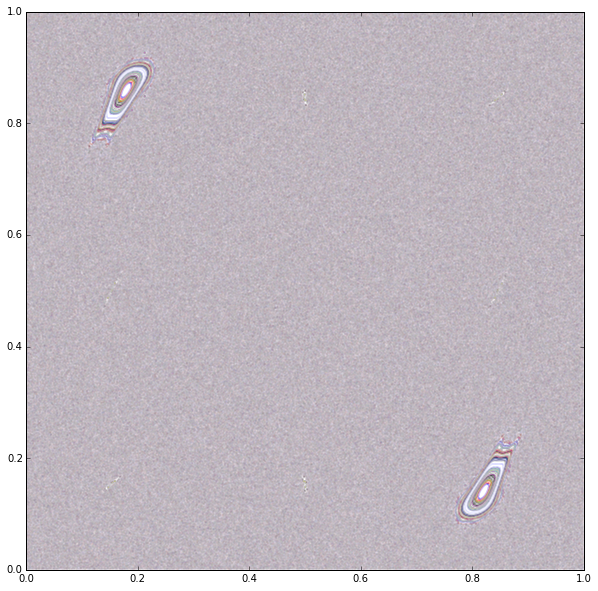}
}
\caption{Orbits of the standard map for various values of the parameter $k$. }\label{fig:stdmap}
\end{figure}

Notice that the properties of the family \eqref{eq:standard-map} that are outlined above also hold for the trace map $T_V$ as the value of $V$ approaches $0$ from below (compare Figures \ref{fig:tmap} and \ref{fig:stdmap}). Essentially all the questions about the standard family can be reformulated for the family $\set{T_V}$, and one must face the same difficulties in studying $\set{T_V}$ as in studying the standard family.

Theorems \ref{thm:Gor1} and \ref{thm:Gor2} for the standard family, and Theorem \ref{thm:main} for the Fibonacci trace map explain the difficulty associated with proving positive measure of the stochastic sea, as outlined in the introduction of \cite{Gorodetski2012}. Namely, one way to "inflate" the stochastic sea is to extend a hyperbolic set to a larger portion of the phase space through homoclinic bifurcations by varying the parameter of the family. On the other hand, small changes in the parameter create \emph{Newhouse domains} leading to loss of hyperbolicity (see \cite{Newhouse1970,Newhouse1974,Newhouse1979,Gonchenko2007,Robinson1986} for the dissipative case, and \cite{Duarte1994b,Duarte2000,Duarte2008,Gronchenko2005} for the conservative case). Then one could try to prove that the set of parameters for which Newhouse domains appear is in some sense small. Indeed, when the Hausdorff dimension of the initial Cantor set is smaller than one, the measure of the set of parameters corresponding to Newhouse domains is small and has zero density at the critical value \cite{Newhouse1976,Palis1987} (in our case the critical values are $k = \infty$ for the standard map and $V = 0$ for the trace map). When the Hausdorff dimension of the initial hyperbolic set is slightly larger than one, Palis and Yoccoz in \cite{Palis2009} give a similar result. They also conjecture that the result should hold for initial sets of arbitrary Hausdorff dimension, but for this their methods would need to be significantly sharpened (the proofs in \cite{Palis2009} are already extremely intricate); see the last paragraph on p. 3 of \cite{Palis2009}. Now we see that if one is to follow this approach, given Theorems \ref{thm:Gor1}, \ref{thm:Gor2}, and Theorem \ref{thm:main}, one must work through these difficulties.

\subsection{Simplest candidate for a map with mixed behavior, and some related questions}

Of course, as mentioned above, the main question associated with the standard family (and one we can ask about the Fibonacci trace map) is about the measure of the stochastic sea. One could ask more generally the following question \cite{Bunimovich2001,Xia2002}.

\begin{quest}
Does there exists an analytic symplectic map of a connected manifold with coexisting stochastic sea of positive measure and the KAM tori?
\end{quest}

Examples of $C^\infty$ maps with such behavior are given in \cite{Bunimovich2001,Donnay1988,Przytycki1982,Wojtkowski1981}. Some analytic non-uniformly hyperbolic maps on surfaces have also been shown to display this type of behavior \cite{Gerber2007,Gerber1985,Cantat2009}. To this day, however, no example of a real analytic symplectomorphism with mixed behavior has been given.

We believe that the family $\set{T_V}$ can be suggested alternatively to the standard family as a "simplest canditate" for an analytic symplectic map with mixed behavior; it also has the advantage of being a family of polynomial maps, so there is a chance that some of the advances in polynomial dynamics (as in \cite{Bedford1993,Bedford2004,Bedford2006}, for example) could yield a better understanding of its properties.

The following question has been posed to the author by John Franks.

\begin{quest}
Is the set of periodic points of $T_V$ for $V < 0$ dense in $\mathbb{S}_V$? At least for all $V<0$ sufficiently close to zero?
\end{quest}

We believe the answer is \emph{yes}, at least for all $V < 0$ close to zero. The techniques of the present paper do not yield this result. We should mention that the periodic points of $T_V$, $V < 0$, and other trace maps have been studied in \cite{Humphries2007,Humphries2015}, but the density question remains open. The main problem is that while every periodic point can be continued from $\mathbb{S}_0$ to $\mathbb{S}_V$ for $V < 0$ close to zero (due to hyperbolicity), it seems hard to prove that densely many of them can be continued in this way uniformly. The reason for this is that many periodic points become elliptic as they are continued along $V$. If $p$ is an elliptic periodic point on $\mathbb{S}_{V_0}$, $V_0 < 0$, then $p$ undergoes a period doubling bifurcation in $V$ for $V > V_0$, bifurcating into two hyperbolic periodic points.

\subsection*{Acknowledgment}

The author is indebted to Anton Gorodetski for suggesting this problem, and for a number of very helpful discussions.

\section{Proof of Theorem \ref{thm:main}}\label{sec:proof}

\subsection{Construction of locally maximal hyperbolic sets of large dimension}\label{sec:the-proof}

We begin with construction of compact locally maximal hyperbolic sets on $\mathbb{S}_0$ of large Hausdorff dimension. More importantly (as this will be used later in the proof), large Hausdorff dimension will follow as a consequence of large thickness, $\tau$, due to the following relation
\begin{align}\label{eq:hdim-thickness}
 \hdim(A) \geq \frac{\log 2}{\log\left(2 + \frac{1}{\tau(A)}\right)}.
\end{align}
(see Chapter 4 in \cite{Palis1993}).

Before we continue, let us recall the definition of thickness (for further details, see Chapter 4 in \cite{Palis1993}). Let $C \subset \mathbb{R}$ be a Cantor set\footnote{By a Cantor set we mean a nonempty compact perfect set with empty interior.} and denote by $I$ its convex hull. Any connected component of $I\setminus C$ is called a \emph{gap} of $C$. A \emph{presentation} of $C$ is given by an ordering $\mathcal{U} = \{U_n\}_{n \ge 1}$ of the gaps of $C$. If $u\in C$ is a boundary point of a gap $U$ of $C$, we denote by $K$ the connected component of $I\setminus (U_1\cup U_2\cup\ldots \cup U_n)$ (with $n$ chosen so that $U_n = U$) that contains $u$ and write
\begin{align*}
\tau(C, \mathcal{U}, u)=\frac{|K|}{|U|}.
\end{align*}
With this notation, the \emph{thickness} $\tau(C)$ is given by
\begin{align*}
\tau(C) = \sup_{\mathcal{U}} \inf_{u} \tau(C, \mathcal{U}, u).
\end{align*}

\begin{lem}\label{lem:horseshoes-on-S0}
 Fix $\epsilon_0 > 0$ small. For all $\epsilon \in (0, \epsilon_0]$, there exists a compact totally disconnected locally maximal hyperbolic set $\Upsilon_\epsilon\subset \mathbb{S}_0$ (away from the four conic singularities) such that
 \begin{align}\label{eq:horseshoes-on-S0-1}
  \text{for all}\hspace{2mm}\epsilon_1 > \epsilon_2,\hspace{2mm}\Upsilon_{\epsilon_1}\subseteq \Upsilon_{\epsilon_2},
 \end{align}
 and for any periodic $q\in \Upsilon_\epsilon$,
 \begin{align}\label{eq:horseshoes-on-S0-2}
  \lim_{\epsilon\rightarrow 0}\tau(\Upsilon_\epsilon\cap W_\loc^s(q)) = \infty\hspace{5mm}\text{and}\hspace{5mm}\lim_{\epsilon\rightarrow 0}\tau(\Upsilon_\epsilon\cap W_\loc^u(q)) = \infty
 \end{align}
\end{lem}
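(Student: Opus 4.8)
The plan is to exploit the explicit semiconjugacy in \eqref{eq:factor} between $T_0|_{\mathbb{S}_0}$ and the linear hyperbolic automorphism $\mathcal{A}$ of $\mathbb{T}^2$. First I would work on the torus: for the Anosov map $\mathcal{A}$ I would build, for each small $\epsilon$, a locally maximal hyperbolic set $\widetilde{\Upsilon}_\epsilon \subset \mathbb{T}^2$ obtained as a Markov subshift of the natural Markov partition for $\mathcal{A}$, chosen so that $\widetilde{\Upsilon}_\epsilon$ avoids an $\epsilon$-neighborhood of the four points of $\mathbb{T}^2$ that $\mathcal{F}$ sends to the conic singularities $P_1,\dots,P_4$ (these are the half-integer points, i.e.\ the fixed/period-two points of $\mathcal A$ whose images are the singularities), and so that the family is nested: $\epsilon_1 > \epsilon_2 \Rightarrow \widetilde{\Upsilon}_{\epsilon_1} \subseteq \widetilde{\Upsilon}_{\epsilon_2}$. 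Concretely one removes from the subshift the finitely many symbols (after passing to a sufficiently fine refinement of the Markov partition, whose mesh is controlled by $\epsilon$) whose rectangles meet the bad neighborhoods; what remains is still a topologically mixing SFT because the excluded region is a proper sub-collection and $\mathcal A$'s partition graph is primitive, and the nesting is automatic from using a single refining sequence of partitions. The key quantitative point is that as $\epsilon \to 0$ one removes a vanishing proportion of symbols, so the stable/unstable Cantor sets $\widetilde{\Upsilon}_\epsilon \cap W^{s/u}_{\mathrm{loc}}(\tilde q)$ — which for the \emph{linear} map are genuine affine Cantor sets in the stable/unstable lines with ratios governed by how many partition intervals survive at each level — have thickness tending to infinity. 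This is a direct computation with affine Cantor sets: if at each step a fraction $1 - o(1)$ of subintervals is retained, the bridge-to-gap ratios blow up, hence $\tau \to \infty$.

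Next I would push this set to $\mathbb{S}_0$. The factor map $\mathcal{F}\colon \mathbb{T}^2 \to \mathbb{S}_0$ is a local diffeomorphism away from the four branch points (the preimages of the singularities), and it is two-to-one, identifying $(\theta,\phi)$ with $(-\theta,-\phi)$. Since $\widetilde{\Upsilon}_\epsilon$ is bounded away from the branch points, $\mathcal{F}$ restricts to a local diffeomorphism on a neighborhood of it; setting $\Upsilon_\epsilon \eqdef \mathcal{F}(\widetilde{\Upsilon}_\epsilon)$ gives a compact $T_0$-invariant set, and because $\mathcal{F}$ conjugates $\mathcal{A}$ to $T_0$ on this region, $\Upsilon_\epsilon$ is hyperbolic for $T_0$. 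Local maximality and total disconnectedness transfer because $\mathcal F$ is a local homeomorphism there (shrink the isolating neighborhood so it has exactly two, homeomorphic, preimage components, or pass to the $\mathbb Z/2$-equivariant picture and note $\widetilde\Upsilon_\epsilon$ can be chosen symmetric). Nesting \eqref{eq:horseshoes-on-S0-1} is inherited from the nesting of the $\widetilde{\Upsilon}_\epsilon$. For \eqref{eq:horseshoes-on-S0-2}: thickness of a Cantor set in a curve is a $C^{1+\alpha}$-invariant up to the distortion of the diffeomorphism — more precisely, a bi-Lipschitz (indeed $C^1$) change of coordinates distorts all the ratios $|K|/|U|$ by at most a bounded factor depending on the derivative bounds of $\mathcal F$ and $\mathcal F^{-1}$ on a fixed compact neighborhood of $\mathbb{S}_0$ minus the singularities. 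Since all the $\widetilde\Upsilon_\epsilon$ live in that fixed compact region, these distortion bounds are uniform in $\epsilon$, so $\tau(\Upsilon_\epsilon \cap W^{s}_{\mathrm{loc}}(q)) \geq c\,\tau(\widetilde\Upsilon_\epsilon \cap W^{s}_{\mathrm{loc}}(\tilde q)) \to \infty$, and likewise for the unstable side. (One also needs that $W^{s/u}_{\mathrm{loc}}$ of $T_0$ corresponds under $\mathcal F$ to $W^{s/u}_{\mathrm{loc}}$ of $\mathcal A$, which is immediate from the conjugacy.)

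The main obstacle I anticipate is the quantitative thickness estimate: making precise that a Markov sub-partition of $\mathcal A$'s partition, refined to mesh $\sim \epsilon$ and with only the $O(\epsilon)$-fraction of rectangles meeting the four bad neighborhoods deleted, yields stable/unstable Cantor sets whose thickness genuinely diverges rather than merely staying bounded below. One has to track, level by level in the refinement, that the retained intervals form long unbroken "bridges" — i.e.\ that the deleted intervals are sparse in each dyadic-type generation, not just globally — and that the distortion coming from the fact that $\mathcal A$'s Markov partition rectangles are not exactly uniform (and that $\mathcal F$ is nonlinear) does not overwhelm this. A clean way to organize it is to fix the eigendirections of $\mathcal A$ as linear coordinates, in which $\mathcal A$ is literally a linear expansion/contraction by $\frac{\sqrt5+1}{2}$, so the stable/unstable slices of the subshift are exactly self-affine Cantor sets with explicitly computable gap structure; then the thickness divergence is a transparent calculation, and only the final transfer through $\mathcal F$ carries a bounded (hence harmless) distortion. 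A secondary, more bookkeeping-type obstacle is checking that topological mixing (hence local maximality of a \emph{transitive} set, which later parts of the argument use) survives the deletions — handled by choosing the surviving symbol set to contain a primitive sub-block of the transition graph, possible since we only ever delete a small proportion of a very fine partition.
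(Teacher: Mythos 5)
Your construction is the same skeleton as the paper's: forbid orbits from entering small Markov rectangles around the preimages of the singularities, get compactness, hyperbolicity, nesting and total disconnectedness from the semiconjugacy $\mathcal{F}$, and transfer thickness through $\mathcal{F}$ using a uniform Lipschitz bound near the periodic point. The genuine gap is exactly the step you flag as your ``main obstacle'': the divergence of the thickness. The heuristic you offer in its place --- ``a fraction $1-o(1)$ of subintervals is retained at each generation, hence the bridge-to-gap ratios blow up'' --- is not valid as stated. Thickness is governed by the \emph{worst local} bridge-to-gap ratio, so if two forbidden rectangles (more precisely, two crossings of the stable arc by forward images of forbidden rectangles of the same generation) happen to land adjacent to each other, the bridge between the two resulting gaps is comparable to the gaps themselves and $\tau$ stays bounded no matter how small the deleted proportion is. What is really needed is that, along a fixed stable arc, the crossings of the images of the deleted rectangles are separated by bridges that are long compared with the stable widths of those images, uniformly over all generations; this is the level-by-level sparseness estimate you acknowledge but do not supply, so the central inequality of the lemma is left unproved in your proposal.

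The paper sidesteps this bookkeeping with a comparison argument rather than a direct computation. Fix $\epsilon_0$ and let $\gamma$ be the convex hull of the Cantor set $K(\epsilon_0)\subset W^s_\loc(\tilde q)\cap\mathcal{F}^{-1}(\Upsilon_{\epsilon_0})$; its thickness is some positive number, and no estimate on it is needed. Because the deleted rectangles are nested in $\epsilon$ (third property of the partitions $\mathcal{P}_\epsilon$) and the gaps of $K(\epsilon)$ are components of $\bigcup_n\widetilde{\mathcal{A}}^n(E_{\epsilon,(i_0,j_0)})\cap\gamma$, every gap $\tilde U_n$ of $K(\epsilon)$ sits inside the corresponding gap $U_n$ of $K(\epsilon_0)$ with $|U_n|/|\tilde U_n|\geq \delta/\epsilon$, where $\delta$ is the minimal stable width of the $\epsilon_0$-rectangles, while the adjacent bridges can only grow as $\epsilon$ decreases. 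Hence $\tau(K(\epsilon))\geq(\delta/\epsilon)\,\tau(K(\epsilon_0))\to\infty$, with no need to control run lengths of surviving symbols or the affine structure of the slices. Either import this comparison trick, or genuinely carry out the separation estimate for your self-affine model; with that point repaired, the remainder of your outline (pushing forward by $\mathcal{F}$, uniform distortion bounds, nesting) matches the paper and goes through.
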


\begin{proof}
 For the sake of convenience, let us set $P = \set{P_i}_{i = 1,\dots,4}$ -- the set of conic singularities of $\mathbb{S}_0$ (see equation \eqref{eq:singularities} in Section \ref{sec:intro} above). With $\epsilon_0$ chosen sufficiently small, for $\epsilon\in (0,\epsilon_0]$, construct a Markov partition $\mathcal{P}_\epsilon$ on $\mathbb{T}^2$ for the map $\mathcal{A}$ from \eqref{eq:anosov}, satisfying the following conditions (below the map $\mathcal{F}$ is the semiconjugacy from equation \eqref{eq:factor}).
 \it
 \begin{itemize}\itemsep0.5em
  \item Each $p\in \mathcal{F}^{-1}(P)$ is contained in the interior of an element of $\mathcal{P}_\epsilon$;

  \item if $E_\epsilon$ is an element of $\mathcal{P}_\epsilon$ containing $p$, then $\diam(E) \leq \epsilon$;


  \item If $\epsilon_1 > \epsilon_2$ and if $E_{\epsilon_1}\in\mathcal{P}_{\epsilon_1}$ and $E_{\epsilon_2}\in\mathcal{P}_{\epsilon_2}$ both contain $p$, then $E_{\epsilon_2}\subset E_{\epsilon_1}$.
 \end{itemize}
\rm
 Now projecting the interior of the elements of $\mathcal{P}_\epsilon$ containing $\mathcal{F}^{-1}(P)$ into $\mathbb{S}_0$ via $\mathcal{F}$ results in an open neighborhood $U_\epsilon$ of $P$ in $\mathbb{S}_0$. Define for convenience
 \begin{equation}\label{eq:away-from-sing}
  \mathbb{S}_{0,\epsilon} \eqdef \mathbb{S}_0\setminus U_\epsilon.
 \end{equation}
 Certainly with $\epsilon_0$ chosen sufficiently small in the beginning, for all $\epsilon\in (0,\epsilon_0]$ there exist points $x\in\mathbb{S}_{0,\epsilon}$ such that $\mathcal{O}_{T}(x)$, the full orbit of $x$ under $T$, does not intersect $U_\epsilon$ (periodic points of $T_0$ form a dense subset of $\mathbb{S}_0$). Denote the set of all such points by $\Upsilon_\epsilon$. It is easily seen that $\Upsilon_\epsilon$ is compact and invariant under $T_0$; furthermore, $\Upsilon_\epsilon$ is hyperbolic, $T_0|_{\Upsilon_\epsilon}$ being a factor of $\mathcal{A}|_{\mathcal{F}^{-1}(\Upsilon_\epsilon)}$ via the smooth map $\mathcal{F}$ (away from singularities). It is also clear that $\set{\Upsilon_\epsilon}$ forms a dynamically monotone family (i.e. equation \eqref{eq:horseshoes-on-S0-1} holds).

 Now take a point in $U_\epsilon$. Its stable and unstable manifolds are dense in $\mathbb{S}_0$ (since their lifts to $\mathbb{T}^2$ by $\mathcal{F}^{-1}$ are dense in $\mathbb{T}^2$). It follows that $\Upsilon_\epsilon$ is totally disconnected. It is also easy to see that $\Upsilon_\epsilon$ is locally maximal.

 It remains to show \eqref{eq:horseshoes-on-S0-2}.

 Pick a periodic point $q\in\Upsilon_{\epsilon_0}$, say of period $k$. Let $\tilde{U}\subset \mathbb{T}^2$ be a small neighborhood of a point $\tilde{q}$ in $\mathcal{F}^{-1}(q)$, in particular so that $\mathcal{F}(\tilde{U})\cap U_{\epsilon_0} = \emptyset$. Since the differential $D\mathcal{F}$ is bounded on $\tilde{U}$, $\mathcal{F}|_{\tilde{U}}$ is Lipschitz with a Lipschitz constant that does not depend on $\epsilon$. Thus the distortion of thickness of any subset of $\tilde{U}$ under application of $\mathcal{F}$ is bounded at most by multiplication by some positive constant that does not depend on $\epsilon$. It is therefore enough to prove
 \begin{align}\label{eq:arb-1}
 \begin{split}
  \lim_{\epsilon\rightarrow 0}\tau(\tilde{U}\cap W_\loc^s(\tilde{q})\cap \mathcal{F}^{-1}(\Upsilon_\epsilon)) &= \infty\hspace{5mm}\text{and} \\
  \lim_{\epsilon\rightarrow 0}\tau(\tilde{U}\cap W_\loc^u(\tilde{q})\cap \mathcal{F}^{-1}(\Upsilon_\epsilon)) &= \infty.
  \end{split}
 \end{align}

 Let us concentrate on the stable manifold, $W^s_\loc(\tilde{q})$ (the argument for the unstable manifold is similar). Also, henceforth we work with the map $\widetilde{\mathcal{A}} = \mathcal{A}^m$, where $m$ is a common multiple of $k$ and $6$. While this is not necessary, it ensures that $\tilde{q}$ as well as the points of $\mathcal{F}^{-1}(P)$ are fixed under $\widetilde{\mathcal{A}}$.

 Notice that $W_\loc^s(\tilde{q})\cap \mathcal{F}^{-1}(\Upsilon_\epsilon)$ is a Cantor set; let $\gamma\subset W_\loc^s(\tilde{q})$ be an arc which is the convex hull of a Cantor subset $K(\epsilon_0)$ of $W_\loc^s(\tilde{q})\cap \mathcal{F}^{-1}(\Upsilon_{\epsilon_0})$. Then for all $\epsilon\in(0,\epsilon_0]$, $\gamma$ is also the convex hull of the Cantor subset $K(\epsilon)$, and in fact for $\epsilon_1 > \epsilon_2$, $K(\epsilon_1)\subset K(\epsilon_2)$. Denote by ${E}_{\epsilon,(i,j)}$ the interior of the elements of $\mathcal{P}_\epsilon$ containing $\mathcal{F}^{-1}(P)$, with the two preimages of $P_i$, $i = 2, 3, 4$, lying in ${E}_{\epsilon,(i,1)}$ and ${E}_{\epsilon,(i,2)}$ ($P_1$ has a unique preimage); moreover, we may assume that the seven open sets $\{{E}_{\epsilon,(i,j)}\}$ are pairwise disjoint. The gaps of the Cantor set $K(\epsilon)$ in $\gamma$ can be classified into seven types, for each choice of $i_0\in \set{1,\dots,4}$ and $j_0\in\set{1,2}$ (when $i_0 = 1$, $j_0 = 1$): these are connected components of $\bigcup_{n\in\N}\widetilde{\mathcal{A}}^n({E}_{\epsilon,(i_0,j_0)})\cap \gamma$. The lengths of these connected components are controlled by the size of the rectangular elements ${E}_{\epsilon,(i_0, j_0)}$, which is in turn controlled by $\epsilon$, and the gaps for $\epsilon_1$ are embedded in those for $\epsilon_2$ if $\epsilon_1 > \epsilon_2$. We get \eqref{eq:arb-1} from this as follows.

 Let $\mathcal{U} = \set{U_n}_{n\in\N}$ be a presentation of gaps as in the definition of thickness above for the Cantor set $K(\epsilon_0)$. Take any $\epsilon\in (0, \epsilon_0)$ and  let $\tilde{\mathcal{U}} = \set{\tilde{U}_n}_{n\in\N}$ be a presentation of gaps for $K(\epsilon)$ such that $\tilde{U}_n\subset U_n$. Let
 \begin{align*}
 \delta = \min_{(i_0, j_0)}\set{\abs{E_{\epsilon_0, (i_0, j_0)}}}\hspace{2mm}\text{ and }\hspace{2mm}\tilde{\delta}=\max_{(i_0, j_0)}\set{\abs{E_{	\epsilon, (i_0, j_0)}}},
 \end{align*}
where in this case $\abs{\cdot}$ is the width of the rectangular elements of the Markov partition measured along the stable direction. Given $V$ an element of a presentation, denote its length by $\abs{V}$. Then we have
 \begin{align*}
 \frac{\abs{U_n}}{\abs{\tilde{U}_n}}\geq \frac{\delta}{\tilde{\delta}}\geq \frac{\delta}{\epsilon}.
 \end{align*}
 Now let $B_n^l$ and $B_n^r$ denote the two intervals left after removing $U_1, \dots, U_n$ from the convex hull of $K(\epsilon_0)$ such that $B_n^l$ contains one endpoint of $U_n$ and $B_n^r$ contains the other (respectively, $\tilde{B}_n^l$ and $\tilde{B}_n^r$ with $\tilde{U}$ in place of $U$). Denote the length as above by $\abs{\cdot}$. We have
 \begin{align*}
 \abs{\tilde{B}_n^{l, r}}\geq \abs{B_n^{l, r}}
 \end{align*}
and
 \begin{align*}
 \min\set{\frac{\abs{\tilde{B}_n^l}}{\abs{\tilde{U}_n}}, \frac{\abs{\tilde{B}_n^r}}{\abs{\tilde{U}_n}}} \geq \frac{\abs{U_n}}{\abs{\tilde{U}_n}}\min\set{\frac{\abs{B_n^l}}{\abs{U_n}}, \frac{\abs{B_n^r}}{\abs{U_n}}}.
 \end{align*}
It follows that
\begin{align*}
\tau(K(\epsilon))\geq \tau(K(\epsilon_0))\frac{\delta}{\epsilon}.
\end{align*}
\end{proof}

We also have the following simple
\begin{lem}\label{lem:prox-haus}
The sequence of sets $\set{\Upsilon_\epsilon}$ constructed above approaches $\mathbb{S}_0$ in the Hausdorff metric as $\epsilon\rightarrow 0$.
\end{lem}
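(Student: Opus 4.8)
The plan is to reduce convergence in the Hausdorff metric to the density in $\mathbb{S}_0$ of a single concrete subset of $\bigcup_{\epsilon\in(0,\epsilon_0]}\Upsilon_\epsilon$, and then to identify that subset as the periodic points of $T_0$ lying off the four singularities. First I would observe that, since $\Upsilon_\epsilon\subseteq\mathbb{S}_0$, one has $d_H(\Upsilon_\epsilon,\mathbb{S}_0)=\sup_{x\in\mathbb{S}_0}\dist(x,\Upsilon_\epsilon)$, so it suffices to show that for every $\delta>0$ there is some $\epsilon>0$ with $\Upsilon_\epsilon$ $\delta$-dense in $\mathbb{S}_0$; by the nesting \eqref{eq:horseshoes-on-S0-1} this then persists for all smaller $\epsilon$, giving the limit. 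In turn, this reduces — via a routine compactness argument (cover the compact set $\mathbb{S}_0$ by finitely many $\frac{\delta}{2}$-balls, catch a point of each ball inside some $\Upsilon_{\epsilon_i}$, and take $\epsilon=\min_i\epsilon_i$, invoking \eqref{eq:horseshoes-on-S0-1} once more) — to the single assertion that $\bigcup_{\epsilon\in(0,\epsilon_0]}\Upsilon_\epsilon$ is dense in $\mathbb{S}_0$.

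For the density, the point is that this union contains every periodic point of $T_0$ whose full orbit misses $P=\{P_1,\dots,P_4\}$. A one-line computation shows that $T$ fixes $P_1$ and cyclically permutes $P_2,P_3,P_4$; since distinct periodic orbits are disjoint, the only periodic orbits of $T_0$ that meet $P$ are $\{P_1\}$ and $\{P_2,P_3,P_4\}$, so the periodic points with orbit disjoint from $P$ are precisely (periodic points of $T_0$)$\,\setminus P$. This set is dense in $\mathbb{S}_0$: periodic points of $T_0$ are dense, as already noted in the proof of Lemma \ref{lem:horseshoes-on-S0} (they are the $\mathcal{F}$-images of the rational, i.e. $\mathcal{A}$-periodic, points of $\mathbb{T}^2$), and deleting the four points of $P$ from a dense set cannot destroy density since $\mathbb{S}_0$ has no isolated points. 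Finally, if $q$ is such a point then $\mathcal{O}_{T_0}(q)$ is a finite set with $\dist(\mathcal{O}_{T_0}(q),P)=2\eta>0$; since each connected component of $U_\epsilon$ is the $\mathcal{F}$-image of a Markov rectangle of $\mathcal{P}_\epsilon$ of diameter $\le\epsilon$ containing a point of $\mathcal{F}^{-1}(P)$, and $\mathcal{F}$ is Lipschitz with some constant $L$ independent of $\epsilon$, we get $U_\epsilon\subseteq\{z\in\mathbb{S}_0:\dist(z,P)<L\epsilon\}$; hence for every $\epsilon<\eta/L$ the orbit of $q$ avoids $U_\epsilon$, i.e. $q\in\Upsilon_\epsilon$. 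Thus $\bigcup_{\epsilon\in(0,\epsilon_0]}\Upsilon_\epsilon\supseteq(\text{periodic points of }T_0)\setminus P$, which is dense, and the reduction of the first paragraph finishes the argument.

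There is no serious obstacle here — this is the ``simple'' lemma the authors announce — and the only two things that need a modicum of care are (i) the passage from the pointwise statement ``$\bigcup_\epsilon\Upsilon_\epsilon$ is dense'' to the uniform statement ``$d_H(\Upsilon_\epsilon,\mathbb{S}_0)\to0$'', which is exactly where compactness of $\mathbb{S}_0$ and the monotonicity \eqref{eq:horseshoes-on-S0-1} of the family $\{\Upsilon_\epsilon\}$ are used, and (ii) confirming that the neighborhoods $U_\epsilon$ genuinely contract onto $P$ as $\epsilon\to0$, which is immediate from the bound $\diam(E)\le\epsilon$ built into the construction of $\mathcal{P}_\epsilon$ in Lemma \ref{lem:horseshoes-on-S0} together with the Lipschitz continuity of the semiconjugacy $\mathcal{F}$ on the compact torus.
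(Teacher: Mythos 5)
Your proposal is correct and follows essentially the same route as the paper: combine the monotonicity \eqref{eq:horseshoes-on-S0-1} with the density of periodic points of $T_0$ in $\mathbb{S}_0$ and the fact that any periodic point whose orbit avoids $P$ lies in $\Upsilon_\epsilon$ once $\epsilon$ is small (the paper phrases this as a contradiction with an open set $O$ missed by all $\Upsilon_\epsilon$, while you argue directly via compactness). The extra details you supply --- that the only periodic orbits meeting $P$ are $\{P_1\}$ and $\{P_2,P_3,P_4\}$, and that $U_\epsilon$ contracts onto $P$ by the diameter bound on the Markov elements plus Lipschitz continuity of $\mathcal{F}$ --- are exactly the points the paper leaves implicit, and they are handled correctly.
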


\begin{proof}
Since for all $\epsilon_1 > \epsilon_2$, $\Upsilon_{\epsilon_1}\subset \Upsilon_{\epsilon_2}$, assuming that the statement is false, we must have an open subset of $\mathbb{S}_0$, call it $O$, such that for all $\epsilon$, $\Upsilon_\epsilon\cap O = \emptyset$. This, however, is impossible since the periodic points are dense in $\mathbb{S}_0$, and for every periodic point $p$ there exists $\epsilon > 0$ such that the orbit of $p$ does not intersect $U_\epsilon$ (the $\epsilon$ neighborhood of the singularities, as above), so that $p \in \Upsilon_\epsilon$.
\end{proof}

We are now ready to prove (1), (2), and (3) of Theorem \ref{thm:main}.

\subsection{Proof of Theorem \ref{thm:main}, (1), (2), and (3)}\label{sec:proof-1-2}
 In what follows, the notation, unless explicitly defined, coincides with that of the statement of Lemma \ref{lem:horseshoes-on-S0} and the proof thereof.

 With $\epsilon_0$ from Lemma \ref{lem:horseshoes-on-S0}, for $\epsilon\in (0, \epsilon_0]$, let $A_\epsilon$ be an open neighborhood of $\Upsilon_\epsilon$ such that $A_\epsilon\cap U_\epsilon = \emptyset$. Let $B_\epsilon$ be an open neighborhood of $\Upsilon_\epsilon$ whose compact closure lies in $A_\epsilon$ and $T_0(\overline{B_\epsilon})\subset A_\epsilon$. Clearly $T_0: B_\epsilon\rightarrow A_\epsilon$ is a diffeomorphism with a compact invariant locally maximal hyperbolic set $\Upsilon_\epsilon$, which contains $\Upsilon_\delta$ for every $\delta \in (0, \epsilon_0]$ with $\delta \geq \epsilon$. Define a family of smooth maps
 \begin{align}\label{eq:arb-3}
  \pi_{V, \epsilon}: \mathbb{S}_{0,\epsilon}\rightarrow \mathbb{S}_V,
 \end{align}
 satisfying the following properties.
 \it
 \begin{itemize}\itemsep0.5em
  \item For every $\epsilon\in(0, \epsilon_0]$, there exists $V_\epsilon\in (-1, 0)$ (increasing in $\epsilon$ and approaching zero as $\epsilon$ tends to zero) such that for all $V\in (V_\epsilon, 0)$, $\pi_{V, \epsilon}$ is a $C^2$ diffeomorphism onto its image.

  \item The family $\set{\pi_{V,\epsilon}}$ depends smoothly on $V\in (V_\epsilon, 0)$ and is continuous in $V$ at $V = 0$ (that is,  $\lim_{V\rightarrow 0}\pi_{V,\epsilon} = \mathrm{Id}|_{\mathbb{S}_{0,\epsilon}}$) in the $C^2$ topology.

  \item For any $\epsilon_2,\epsilon_1\in (0, \epsilon_0]$ with $\epsilon_2 > \epsilon_1$, for all $V\in (V_{\epsilon_1},0)$, $\pi_{V, \epsilon_2} = \pi_{V, \epsilon_1}|_{\mathbb{S}_{0, \epsilon_2}}$.
 \end{itemize}
 \rm
 \begin{rem}
  The family $\pi_{V, \bullet}$ is easy to define explicitly by considering, for example, projection along the gradient flow. Also, $C^2$ smoothness is of course not the best that one can have, but it is sufficient for our purposes.
 \end{rem}

 Now for any $\epsilon > 0$ and $V\in (V_\epsilon, 0)$,  let
 \begin{align*}
  f_{V,\epsilon}(x) = \pi_{V,\epsilon}^{-1}\circ T_V\circ\pi_{V,\epsilon}(x),
 \end{align*}
 where it is defined. Taking $V_\epsilon$ sufficiently close to zero to begin with ensures that $f_{V,\epsilon}$ is defined on $B_\epsilon$ with range in $A_\epsilon$, and in fact $f_{V,\epsilon}: B_\epsilon\rightarrow A_\epsilon$ is a $C^2$ diffeomorphism. Furthermore, $f_{V, \bullet}$ depends continuously on $V$ in the $C^2$ topology and for any $\epsilon_2,\epsilon_1\in (0,\epsilon_2]$ with $\epsilon_2 > \epsilon_1$, and any $V\in (V_{\epsilon_1}, 0)$, $f_{V,\epsilon_2}|_{B_{\epsilon_2}} = f_{V, \epsilon_1}|_{B_{\epsilon_2}}$ (this follows from the definition of $f_{V,\bullet}$ and properties of $\pi_{V,\bullet}$). Hence by considering $V$ sufficiently close to zero, we can ensure existence of compact locally maximal invariant hyperbolic sets for $f_{V,\bullet}$ of large thickness (thickness is a continuous function of the diffeomorphism in the $C^2$-topology -- see \cite[Theorem 2 in Section 4.3]{Palis1993}), and hence of large Hausdorff dimension. Projecting these hyperbolic sets back into $\mathbb{S}_V$ via $\pi_{V,\bullet}$ gives the desired dynamically monotone family of hyperbolic sets whose Hausdorff dimension approaches two as $V$ approaches zero. This proves (1) and (3). The proof of (2) follows directly from Lemma \ref{lem:prox-haus} and the properties of $\pi_{V, \bullet}$.

\subsection{Proof of Theorem \ref{thm:main}, (4)}\label{sec:proof4}

 Before we prove the three assertions of Theorem \ref{thm:main} (4) (namely existence of homoclinic tangencies; that the tangencies are quadratic; and that the tangencies unfold generically), we need some preparation, which is carried out in Section \ref{sec:technical}. We then proceed to prove Theorem \ref{thm:main} (4) in the following order. First, we hypothesize that tangencies between certain stable and unstable manifolds exist. In Section \ref{sec:quadratic} we demonstrate that if these tangencies indeed exist, they must be quadratic. In Section \ref{sec:unfolding} we prove that if the tangencies exist, then they unfold generically. Finally, in Section \ref{sec:existence} we show that the tangencies indeed exist.

 In the following sections we use standard results, notation and terminology from the theory of (normally and partially) hyperbolic dynamical systems. Good references would be \cite{Hasselblatt2006} and \cite{Hasselblatt2002}.

 \subsubsection{Technical Preparation}\label{sec:technical}

 Recall that $P_1 = (1,1,1)$ is a (fixed under $T$) singularity of $S_0$. This singularity lies on the curve $\rho$ of period-two periodic points, and this curve is normally hyperbolic. Let us denote by $W^s(\rho)$ (respectively, $W^u(\rho)$) the stable (respectively, the unstable) manifold of $\rho$. Denote the strong-stable (respectively, the strong-unstable) manifold of $P_1$ by $W^{ss}(P_1)$ (respectively, $W^{uu}(P_1)$).

 We have the following statement, which is not difficult to see and has been used without explicit proof in a number of papers \cite{Damanik2010a, Damanik2013a}, and proved explicitly (in a slightly more general context) in \cite[Lemma 4.8]{Mei2014}.

 \begin{lem}\label{lem:quadratic-surfaces}
  The stable (respectively, the unstable) manifold of $\rho$ is quadratically tangent to $\mathbb{S}_0$ along the strong-stable (respectively, the strong-unstable) manifold of $P_1$.
 \end{lem}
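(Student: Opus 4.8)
The plan is to work in explicit coordinates near the fixed singularity $P_1=(1,1,1)$ and compare the $2$-jets of the surface $S_0$ and of $W^s(\rho)$ along the curve $W^{ss}(P_1)$. First I would linearize $T$ at $P_1$: a direct computation of $DT_{(1,1,1)}$ gives an eigenvalue $1$ (tangent to the curve $\rho$ of period-two points through $P_1$), an eigenvalue with modulus $>1$ (strong-unstable) and one with modulus $<1$ (strong-stable); the fact that $P_1$ lies on $\rho$ and that $\rho$ is normally hyperbolic is exactly this spectral picture. Choosing coordinates $(u,c,s)$ adapted to the strong-unstable/center/strong-stable splitting, $W^{ss}(P_1)$ becomes (a graph over) the $s$-axis, and $W^s(\rho)$ is the center-stable manifold, a surface tangent at $P_1$ to the $(c,s)$-plane. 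The surface $S_0=\{I=0\}$ with $I$ as in \eqref{e.FV} is singular at $P_1$ (it is a conic singularity, by \eqref{eq:singularities} and the surrounding discussion), so the relevant object is the analytic branch of $S_0$ through $P_1$ tangent to the cone's axis; in these coordinates one checks that this branch is also tangent at $P_1$ to the $(c,s)$-plane — both $S_0$ and $W^s(\rho)$ contain the curve $\rho$ and the strong-stable direction, so their tangent planes along $W^{ss}(P_1)$ agree to first order.

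The heart of the argument is then the second-order comparison along the whole arc $W^{ss}(P_1)$, not just at $P_1$. Here I would use invariance: both $S_0$ and $W^s(\rho)$ are $T$-invariant, $W^{ss}(P_1)$ is $T$-invariant, and $T$ contracts along $W^{ss}(P_1)$ toward $P_1$. Writing both surfaces locally as graphs $c=g(s)+\tfrac12 h(s)\,(\text{transverse})^2+\cdots$ over the $(s,\text{transverse})$-variables — or, more invariantly, comparing the second fundamental forms of the two surfaces restricted to the normal direction of $W^{ss}(P_1)$ inside each surface — the difference of the $2$-jets satisfies a graph-transform/functional equation driven by $DT$. Because the center eigenvalue is $1$ while the strong-stable eigenvalue $\mu$ has $|\mu|<1$, the coefficient governing the quadratic term is multiplied by a factor $\mu^{-2}\cdot(\text{center factor})$ under pullback, which has modulus $>1$; by the standard contraction-mapping / uniqueness argument for invariant jets this forces the second-order contact to be exactly quadratic and nondegenerate all along $W^{ss}(P_1)$, provided it is nondegenerate at $P_1$. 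So it remains to verify nondegeneracy of the quadratic contact at the single point $P_1$, which is a finite computation: expand $I$ to second order at $(1,1,1)$, identify the cone, and check that the Hessian of $I$ restricted to the normal direction of the $(c,s)$-plane does not vanish — equivalently that $W^s(\rho)$ (the center-stable manifold, whose $2$-jet at $P_1$ is determined by $DT$ and the resonance structure) curves away from $S_0$ at nonzero rate. The analogous statement for $W^u(\rho)$, $W^{uu}(P_1)$ follows by applying the same argument to $T^{-1}$, which is again of the same algebraic form.

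The main obstacle I anticipate is handling the conic singularity cleanly: at $P_1$ the surface $S_0$ is not a manifold, so "tangency of $W^s(\rho)$ to $\mathbb{S}_0$ along $W^{ss}(P_1)$" must be interpreted as tangency to the smooth branch of $S_0$ through $P_1$ (the one asymptotic to the cone axis, which is the only branch meeting $\mathbb{S}_0=S_0\cap[-1,1]^3$ near $P_1$), and one must be careful that the quadratic expansion is taken along that branch and not confused with the cone's degeneracy. Concretely this means first resolving the singularity — e.g. passing to the torus model via the semiconjugacy $\mathcal{F}$ of \eqref{eq:factor}, under which $P_1$ lifts to fixed points of $\mathcal{A}$ and the branch structure becomes transparent — and then running the jet-comparison upstairs, where everything is smooth, and pushing the conclusion back down through $\mathcal{F}$ (which is a local diffeomorphism away from the ramification locus, and whose behavior near the ramification points is itself quadratic, consistent with the claimed order of contact). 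Since, as the excerpt notes, this is carried out in detail in \cite[Lemma 4.8]{Mei2014} in greater generality, I would either cite that directly or reproduce the $\mathcal{F}$-lift computation, which is short.
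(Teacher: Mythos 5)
Your overall strategy (an invariance/graph-transform argument propagating a second-order discrepancy along $W^{ss}(P_1)$, seeded by a computation at $P_1$) is not the paper's route, and as written it has two genuine gaps, both concentrated at the point you yourself flag as the main obstacle. First, the geometric picture at $P_1$ is wrong: the Hessian of $I$ at $P_1$ has signature $(2,1)$, so near $P_1$ the Cayley cubic is a nondegenerate quadric cone; there is no smooth or analytic two-dimensional branch of $S_0$ through the vertex, so "the $2$-jet of $S_0$ at $P_1$" that your seed computation needs does not exist. (Relatedly, your justification of first-order tangency is false as stated: $S_0$ does \emph{not} contain the curve $\rho$ --- one computes $I(\rho(x))>0$ for $x\neq 1$ near $1$, which is precisely the positivity the paper exploits; also the center eigenvalue of $DT_{P_1}$ along $\rho$ is $-1$, not $1$, though only its modulus matters.) Your proposed repair via the semiconjugacy $\mathcal{F}$ of \eqref{eq:factor} cannot work either: $\mathcal{F}$ uniformizes only the surface $\mathbb{S}_0$, while $W^s(\rho)$ is a surface in $\R^3$ not contained in $\mathbb{S}_0$, so there is no "upstairs" in which to compare the two $2$-jets. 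Second, even granting a meaningful nondegeneracy statement "at $P_1$", the invariance relation only propagates nonvanishing of the transverse quadratic coefficient along individual orbits in $W^{ss}(P_1)$; to conclude nondegeneracy at every point of the local strong-stable manifold you need it on an entire fundamental domain (equivalently, uniformly on a punctured neighborhood of $P_1$ in $W^{ss}(P_1)$), and supplying that is essentially the content of the lemma. So at the crucial step the proposal assumes what is to be proved. (A repaired version is possible --- e.g. comparing the blow-up of the second fundamental form of the cone near its vertex with the bounded curvature of the smooth surface $W^s(\rho)$ and then propagating outward by invariance --- but that argument is not in your write-up; also your multiplier "$\mu^{-2}\cdot(\text{center factor})$" is not the relevant one: the quadratic coefficient transverse to $W^{ss}$ inside the common tangent plane transforms by (normal stretch)$/$(center stretch)$^2$, which happens to be $>1$ as well.)

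For contrast, the paper's proof avoids the singular point entirely and needs no propagation. It uses the Fricke--Vogt invariant itself as the transverse measurement: by \cite[Proposition 4.10]{Yessen2011}, $W^s(\rho)$ meets each $S_{V>0}$ transversally and misses every $S_{V<0}$, so $W^s(\rho)$ is smoothly foliated by $\set{W^s(\rho)\cap S_V}_{V\ge 0}$. Any $C^2$ curve $\gamma\subset W^s(\rho)$ crossing a fundamental domain of $W^{ss}(P_1)$ transversally is carried by the holonomy of this foliation onto an arc of the explicitly parameterized curve $\rho(x)=(x,\tfrac{x}{2x-1},x)$, and a direct computation gives $\frac{d}{dx}I(\rho(x))|_{x=1}=0$, $\frac{d^2}{dx^2}I(\rho(x))|_{x=1}>0$. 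Hence $I\circ\gamma$ vanishes to exactly second order where $\gamma$ crosses $W^{ss}(P_1)$, and since $S_0$ is a regular level set of $I$ there, the contact of $W^s(\rho)$ with $\mathbb{S}_0$ is exactly quadratic at every point of the fundamental domain simultaneously; the unstable case follows from the symmetry \eqref{eq:sym}. Your fallback of simply citing \cite[Lemma 4.8]{Mei2014} would be legitimate (the paper itself points there), but the self-contained argument you sketch does not yet close.
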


 \begin{proof}
  We only prove the result for the stable manifold; the analogous result for the unstable manifold follows easily from the time reversal symmetry of $T$, namely
  \begin{align}\label{eq:sym}
  T^{-1}(x,y,z) = (\sigma\circ T\circ \sigma)(x,y,z)\hspace{5mm}\text{with}\hspace{5mm}\sigma(x,y,z)=(z,y,x).
  \end{align}

  Let $F$ be a fundamental domain along $W^{ss}(P_1)$ in a small neighborhood of $P_1$, and let $\gamma$ be a twice differentiable curve lying in $W^s(\rho)$ and crossing $F$ transversally. It is known that $W^s(\rho)$ intersects transversally the surfaces $S_{V > 0}$ and does not intersect any $S_{V < 0}$ (see Proposition 4.10 in \cite{Yessen2011}). The manifold $W^s(\rho)$ is smoothly foliated by $\set{W^s(\rho)\cap S_V}_{V\geq 0}$ (see \cite[Theorem B]{Pugh1997}), and since $\gamma$ is transversal to $F$, it follows that $\gamma$ is also transversal to $W^s(\rho)\cap S_V$ for all $V > 0$ sufficiently small, and hence also to the surfaces $S_{V > 0}$ near $F$. A parameterization of $\rho$ is easily computed (by solving $T^2 = T$) to be
  \begin{align*}
   \rho(x) = \left(x, \frac{x}{2x - 1}, x\right)\hspace{2mm}\text{ for }\hspace{2mm} x \neq \frac{1}{2},
  \end{align*}
  and at $x = 1$, $\rho(1) = P_1$; moreover, as can be verified by direct computation, $\rho$ is transversal to the surfaces $S_{V > 0}$ and lies entirely in the plane $\set{x = z}$. In the plane $\set{x = z}$, $\rho$ can be viewed as the graph of the function $x\mapsto I(\rho(x))$, where $I$ is the Fricke-Vogt invariant from \eqref{e.FV}. A straightforward computation shows that $I'(\rho(x))|_{x = 1} = 0$ and $I''(\rho(x))|_{x = 1} > 0$. Now $\gamma$ can be parameterized similarly by projecting $\gamma$ along the smooth foliation $\set{W^s(\rho)\cap S_{V}}_{V > 0}$ onto an arc of $\rho$, and the claim follows.
 \end{proof}

Define a diffeomorphism $\Phi$ from a neighborhood $U$ of $P_1$ in $\mathbb{R}^3$ into $\R^3$ that rectifies the stable and unstable manifolds of $\rho$ and in addition possesses the following properties.
\begin{itemize}\itemsep0.5em
  \item $\Phi(1,1,1) = (0,0,0)$.

  \item $\Phi(W^s(\rho)\cap U)$ is part of the $xy$-plane.

  \item $\Phi(W^u(\rho)\cap U)$ is part of the $xz$-plane.

  \item $\Phi(W^{ss}(P_1))$, where $W^{ss}(P_1) = W^s(\rho)\cap\mathbb{S}_0$, is part of the non-negative $y$ axis.

  \item $\Phi(W^{uu}(P_1))$, where $W^{uu}(P_1) = W^u(\rho)\cap \mathbb{S}_0$, is part of the non-negative $z$ axis.

  \item Implied by these properties, $\Phi(\rho)$ is part of the $x$ axis.

  \item $\Phi$ maps the surfaces $U\cap \mathbb{S}_V$ with $V < 0$ into the region of $\R^3$ defined by $\set{(x, y, z)\in\R^3: y > 0, z > 0}$.
 \end{itemize}

 We assume that $\epsilon$ in the proof of Lemma \ref{lem:horseshoes-on-S0} is chosen so small, that the compact closure of the component of the neighborhood $U_\epsilon\subset\mathbb{S}_0$ of singularities that contains $P_1$ lies inside of $U$. \emph{From now on we redefine $U_\epsilon$ to denote the component of the $\epsilon$ neighborhood of singularities that contains $P_1$}.

 Let
 \begin{align}\label{eq:rect-dynamics}
  \mathcal{G}_V(x) = \Phi\circ T_V\circ\Phi^{-1}(x)\hspace{2mm}\text{ and }\hspace{2mm}\mathcal{G}(x) = \Phi\circ T\circ\Phi^{-1}(x)
 \end{align}
 for all $x\in \Phi(U)$ such that $T\circ\Phi^{-1}(x)\in U$; that is, $\mathcal{G}_V$ (respectively, $\mathcal{G}$) is the push-forward of the dynamical system $(\mathbb{S}_V, T_V)$ (respectively, $(\R^3, T)$) under the change of coordinates $\Phi$, in a neighborhood of $P_1$.

Observe that for a given $\epsilon > 0$, there exists $\delta > 0$ such that for all $V \in (-\delta, \delta)$, $\Upsilon_\epsilon$ admits a continuation $\Lambda_{V, \epsilon}$ to the surface $S_V$ (and in case $V < 0$, only $\mathbb{S}_V$ component of $S_V$ is relevant -- recall from Section \ref{sec:intro} that the points on the other four smooth components of ${S}_V$, $V < 0$, escape to infinity -- and in this case the continuation $\Lambda_{V, \epsilon}$ was constructed in the previous section).

\begin{rem}
Let us remark that $\Upsilon_\epsilon$ cannot be continued to $\mathbb{S}_V$ for all $V \in (0, -1)$ since many hyperbolic periodic points become elliptic for some values of $V < 0$. See \cite{Humphries2007} for a more detailed discussion.
\end{rem}

Extend $\pi_{V, \epsilon}$ in the previous section to the positive values of $V$ (sufficiently close to zero depending on $\epsilon$) and set
\begin{align*}
\mathbb{S}_{V, \epsilon}\eqdef \pi_{V, \epsilon}(\mathbb{S}_{0,\epsilon}).
\end{align*}
Notice that for $V$ sufficiently close to zero (that is, if we take $\delta>0$ sufficiently small above, depending on $\epsilon$), the continuation $\Lambda_{V,\epsilon}$ of $\Upsilon_\epsilon$ is contained in $\mathbb{S}_{V,\epsilon}$.
 Define
 \begin{align}\label{eq:p-hyperb}
  \Lambda_\epsilon \eqdef \bigcup_{V\in [-\frac{\delta}{2}, \frac{\delta}{2}]}\Lambda_{V, \epsilon}.
 \end{align}
 It isn't difficult to see that $\Lambda_\epsilon$ is a compact $T$-invariant partially hyperbolic subset of the smooth three-manifold $\bigcup_{V\in(-\delta, \delta)}\mathbb{S}_{V, \epsilon}$. Indeed, since $T_V$ depends smoothly on $V$, the dynamical system $T$ restricted to $\bigcup_{V\in (-\delta, \delta)}\mathbb{S}_{V,\epsilon}$ can be viewed, up to a smooth conjugacy, as a skew product of the identity map on the interval $(-\delta, \delta)$ and a smooth map on $\mathbb{S}_{0, \epsilon}$, given by
 \begin{align*}
 (V, x)\mapsto (V, \pi_{V, \epsilon}^{-1}\circ T_V\circ\pi_{V, \epsilon}(x))
 \end{align*}
 acting on $(-\delta, \delta)\times \mathbb{S}_{V, 0}$. For all the technical details, see \cite[Proposition 4.9]{Yessen2011}.

In the following proposition we work in the coordinate system induced by $\Phi$, and the neighborhood $U$ is a neighborhood of $P_1$ as above (so that $\Phi(U)$ is a neighborhood of the origin).

\begin{prop}\label{prop:c2-continuity}
Fix $\epsilon > 0$. Let $F$ denote the segment of the positive $y$ (respectively, the positive $z$) axis that lies in $\Phi(U)$ and has the origin as one of its endpoints. Parameterize the images under $\Phi$ of the center-unstable (respectively, center-stable) manifolds of $\Lambda_\epsilon$ in the neighborhood $U$ by the point of intersection with $F$, and denote this parameter by $\eta$. Call the corresponding manifold $W_\eta^{cu}$ (respectively, $W_\eta^{cs}$). Then there exists $\hat\delta > 0$ such that for all $\eta \in F$, the manifolds
\begin{align*}
W_\eta^{cs}\cap \Phi(U)\cap \bigcup_{V\in(-\hat\delta, \hat\delta)}\Phi(\mathbb{S}_V)\hspace{2mm}\text{(respectively,}\hspace{2mm}W_\eta^{cs}\cap \Phi(U)\cap \bigcup_{V\in(-\hat\delta, \hat\delta)}\Phi(\mathbb{S}_V)\text{)}
\end{align*}
approach the $xz$ (respectively, the $xy$) plane in the $C^2$ topology as $\eta$ approaches the origin.

 \end{prop}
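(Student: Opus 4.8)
The plan is to exploit the fact that $\Lambda_\epsilon$ is a partially hyperbolic set whose center direction is exactly the $V$-direction, together with the quadratic-tangency information from Lemma~\ref{lem:quadratic-surfaces} and the $C^2$-continuity of the whole construction in $V$. First I would set up the normally hyperbolic picture: the curve $\rho$ of period-two points is normally hyperbolic, $P_1$ lies on it, and in the $\Phi$-coordinates $W^s(\rho)$ is the $xy$-plane, $W^u(\rho)$ the $xz$-plane, with $\rho$ on the $x$-axis. By the graph transform / $C^r$-section theorems for normally hyperbolic laminations (the same machinery cited via \cite{Pugh1997,Hasselblatt2006}), the center-unstable manifold of $\Lambda_\epsilon$ through a point parameterized by $\eta\in F$ is a $C^2$ graph over a neighborhood in the $xz$-plane, depending $C^2$ on $\eta$, and analogously for the center-stable manifolds over the $xy$-plane. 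The point $\eta=0$ corresponds to $P_1$ itself, where the center-unstable manifold is $W^u(\rho)$, i.e.\ exactly the $xz$-plane. So the statement is a $C^2$-continuity statement: the $C^2$ graphs $W^{cu}_\eta$ converge in $C^2$ to the flat $xz$-plane as $\eta\to 0$, and their intersections with the thin three-manifold $\bigcup_{V\in(-\hat\delta,\hat\delta)}\Phi(\mathbb S_V)$ inherit this convergence.

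The key steps, in order: (i) Invoke the $C^2$ smoothness of the normally hyperbolic lamination of $W^{u}(\rho)$ by the center-unstable leaves of $\Lambda_\epsilon$, uniform in $\eta$ over $F$, which gives that $\eta\mapsto W^{cu}_\eta$ is a $C^2$-continuous family of $C^2$ submanifolds; since $W^{cu}_0 = W^u(\rho)\cap U$ is the $xz$-plane, continuity at $\eta=0$ gives $C^2$-convergence of $W^{cu}_\eta$ to the $xz$-plane. (ii) Observe that $\bigcup_{V}\Phi(\mathbb S_V)$ is a $C^2$ (indeed analytic) three-manifold foliated by the $\Phi(\mathbb S_V)$, and that each $W^{cu}_\eta$ is transverse to this foliation once $\hat\delta$ is small enough — here is where Lemma~\ref{lem:quadratic-surfaces} enters: near $P_1$, $W^u(\rho)$ meets $S_{V>0}$ transversally and misses $S_{V<0}$, tangent to $\mathbb S_0$ along $W^{uu}(P_1)$ to second order, so the intersection $W^{cu}_\eta\cap\bigcup_V\Phi(\mathbb S_V)$ is a well-defined $C^2$ surface depending $C^2$ on $\eta$ and on the parameter $V$. (iii) Combine (i) and (ii): the intersection of a $C^2$-convergent family of transverse $C^2$ submanifolds with a fixed $C^2$ submanifold is itself $C^2$-convergent, so $W^{cu}_\eta\cap\Phi(U)\cap\bigcup_{V\in(-\hat\delta,\hat\delta)}\Phi(\mathbb S_V)$ converges in $C^2$ to $\big(xz\text{-plane}\big)\cap\bigcup_V\Phi(\mathbb S_V)$, which is contained in the $xz$-plane. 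The center-stable case is identical after applying the time-reversal symmetry \eqref{eq:sym}.

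The main obstacle is step (i): making precise that the center-unstable leaves form a $C^2$ lamination with $C^2$ dependence on the transverse parameter $\eta$ \emph{uniformly} as $\eta\to 0$, i.e.\ all the way down to the periodic orbit $\rho$ at $P_1$, where the "center" direction of $\Lambda_\epsilon$ (the $V$-direction) and the genuine unstable direction of $W^u(\rho)$ interact. One must check that the relevant spectral-gap (normal hyperbolicity) conditions hold for $\Lambda_\epsilon$ as a partially hyperbolic set inside the three-manifold $\bigcup_V\mathbb S_{V,\epsilon}$ — this is exactly what was asserted in the paragraph preceding the proposition via the skew-product description and \cite[Proposition~4.9]{Yessen2011} — and that the resulting invariant-section theorem yields $C^2$ (not merely $C^1$) regularity and $C^2$ continuity in $\eta$. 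Granting that, the transversality in step (ii) is quantitative and uniform because of the strict inequality $I''(\rho(x))|_{x=1}>0$ in Lemma~\ref{lem:quadratic-surfaces}, and step (iii) is then a routine application of the implicit function theorem with $C^2$ bounds.
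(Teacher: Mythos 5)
There is a genuine gap, and it sits exactly where you place your ``main obstacle'': step (i) is not a citation-level fact, it is the content of the proposition. The limiting object at $\eta=0$ (the $xz$-plane, i.e.\ $\Phi(W^u(\rho))$) is \emph{not} a leaf of the center-unstable lamination of $\Lambda_\epsilon$: by construction $\rho$ and $P_1$ are excluded from $\Lambda_\epsilon$, and the center direction of $\Lambda_\epsilon$ (the $V$-direction, transverse to the surfaces $\mathbb{S}_V$) degenerates at $P_1$, where $\nabla I=0$, while the neutral direction of the normally hyperbolic curve $\rho$ is along $\rho$ itself. So there is no single normally hyperbolic lamination to which one can apply the graph-transform/section theorems and read off ``$C^2$ dependence on $\eta$ down to $\eta=0$.'' For $\eta$ small, $W^{cu}_\eta\cap\Phi(U)$ is a long forward $\mathcal{G}$-iterate of a leaf crossing a fundamental domain along $F$; its convergence to the $xz$-plane as $\eta\to 0$ is an inclination-lemma-type statement, whose $C^1$ version already requires a dynamical argument and whose $C^2$ version requires control of second derivatives under unboundedly many iterations. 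Lamination regularity for a partially hyperbolic set gives smooth individual leaves with (at best) continuous dependence on the base point in $\Lambda_\epsilon$; it does not give the asserted $C^2$-continuity in $\eta$ with a limit leaf lying outside the lamination, so ``granting that'' concedes the whole proof.

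For comparison, the paper closes this gap dynamically: it first shows the center-stable/center-unstable leaves of $\Lambda_\epsilon$ meet the surfaces $\mathbb{S}_V$ transversally (via Lipschitz dependence in $V$ of the conjugacy $H_V$ from the Hirsch--Pugh fixed-point construction --- note this is \emph{not} a consequence of Lemma~\ref{lem:quadratic-surfaces}, which gives a tangency of $W^u(\rho)$ with $\mathbb{S}_0$, so your step (ii) is also misattributed); it then introduces the center-unstable cone field \eqref{eq:cs-cone}, which is invariant and strictly contracted near the origin because of the form of $D\mathcal{G}_\mathbf{0}$ in \eqref{eq:differential}, yielding $C^1$-convergence of $W^{cu}_\eta$ to the $xz$-plane as $\eta\to 0$; and finally it upgrades to $C^2$ by running the same cone argument for the map $(\mathcal{G},D\mathcal{G})$ on the tangent bundle, whose differential at $(\mathbf{0},\mathbf{0})$ is block diagonal with two copies of $D\mathcal{G}_\mathbf{0}$, so the tangent bundles $TW^{cu}_\eta$ converge to that of the $xz$-plane. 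Some such quantitative iteration argument (or a genuine $C^2$ inclination lemma) is what your step (i) needs before steps (ii)--(iii) can be carried out.
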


 \begin{proof}
  We give the proof only for the center-unstable manifolds; analogous proof for the center-stable manifolds follows by either repeating the present proof with obvious modifications, or by using the symmetry \eqref{eq:sym}. Thus in what follows let $F$ denote the segment of the positive $y$ axis (which, in the coordinates induced by $\Phi$, is the (local) strong-stable manifold of the origin).

  Define a \emph{center-unstable cone field}, $K^{cu}$, in the neighborhood $\Phi(U)$ as follows. For $p\in \Phi(U)$, denote by $\mathbf{v} = (v_x, v_y, v_z)$ a vector in $\mathrm{T}_p\R^3$, the tangent space of $\R^3$ at $p$. Given $\kappa > 0$, the center-unstable cone at $p$ of size $\kappa$ is defined by
  \begin{align}\label{eq:cs-cone}
   K^{cu}_\kappa(p)\eqdef \set{\mathbf{v} \in \mathrm{T}_p\R^3: \norm{v_y} \leq \kappa(\norm{v_x} + \norm{v_z})}.
  \end{align}
  Observe that the center-unstable cone field is invariant under the action by the differential of $\mathcal{G}$ at the origin,
  \begin{align}\label{eq:differential}
   D\mathcal{G}_\mathbf{0} = \begin{pmatrix}
                              -1 & 0 & 0\\
                              0 & \frac{1}{\lambda} & 0\\
                              0 & 0 & \lambda
                             \end{pmatrix},
   \hspace{2mm}\text{where}\hspace{2mm}\lambda = \frac{\sqrt{5} + 1}{2}.
  \end{align}
  In fact, $D\mathcal{G}_\mathbf{0}(K_\kappa^{cu}(p))\subset K_{\kappa/\lambda}^{cu}(D\mathcal{G}_\mathbf{0}(p))$. It follows that for all $p$ sufficiently close to the origin, we have
  \begin{align*}
  D\mathcal{G}_p(K^{cu}_\kappa(p))\subset K^{cu}_{\kappa/\sqrt{\lambda}}(\mathcal{G}(p)).
  \end{align*}
  In particular, we may assume that the neighborhood $U$ is taken sufficiently small to begin with.

We need the following result which can be derived from general principles; for completeness, we include an outline of the main ideas of the proof.

\begin{lem}\label{lem:cu-transversality}
For all $\epsilon > 0$ there exists $\hat\delta > 0$ such that for all $V\in (-\hat\delta, \hat\delta)$, the center-stable and the center-unstable manifolds of $\Lambda_\epsilon$ intersect the surfaces $S_V$ transversally. In particular, they intersect the surface $\mathbb{S}_V$ transversally.
\end{lem}

\begin{proof}[Outline of the proof]
For any $V\in(-\hat\delta, \hat\delta)$, denote by $H_V: \Lambda_{0, \epsilon}\rightarrow \Lambda_{V, \epsilon}$ a homeomorphism conjugating $T|_{\Lambda_{0, \epsilon}}$ to $T|_{\Lambda_{V, \epsilon}}$. Then from \cite[Theorem 7.3]{Hirsch1968} we know that $H_V$ is unique and is the fixed point of a certain contraction on a certain Banach space of continuous maps (see the proof of Theorem 7.3 in \cite{Hirsch1968} for the details); furthermore, since $T|_{\mathbb{S}_{V, \epsilon}}$ is smooth, and hence Lipschitz continuous, then this contraction is also Lipschitz in $V$. It follows that $H_V$ is also Lipschitz continuous in the $C^0$ topology. It follows that for any fixed $x\in \Lambda_{0, \epsilon}$, the smooth curve $V\mapsto H_V(x)$ satisfies
\begin{align*}
\norm{H_0(x) - H_V(x)}\leq C\abs{V},
\end{align*}
with a constant $C > 0$ that does not depend on $V$ or on $x$. It follows that the curve $V\mapsto H_V(x)$ is transversal to the surfaces $\mathbb{S}_{V, \epsilon}$, and since this curve is the intersection of the center-stable and the center-unstable manifolds that contain the point $x$, these manifolds must also be transversal to $\mathbb{S}_{V, \epsilon}$.
\end{proof}

 It follows that there exists $\hat\delta > 0$ such that for all $\eta \in F$ and $V\in (-\hat\delta, \hat\delta)$, ${W}_\eta^{cu}\cap \Phi(U)$ intersects $\Phi(\mathbb{S}_V\cap U)$ transversally. Now take a fundamental domain along $F$, and call it $\hat{F}$. Then there exists $\kappa > 0$ such that for every $\eta\in \hat{F}$
 \begin{align*}
 \text{if}\hspace{2mm} p\in W_\eta^{cu}\cap \Phi(U)\cap \bigcup_{V\in (-\hat\delta, \hat\delta)}\Phi(\mathbb{S}_V)\hspace{2mm}\text{then}\hspace{2mm}\mathrm{T}_p \Phi(W_\eta^{cu})\subset K^{cu}_\kappa(p).
 \end{align*}

  By invariance and contraction of the center-unstable cones under the action of $D\mathcal{G}$, it follows that for any fixed $\eta\in \hat{F}$, the images of
  \begin{align*}
  W_\eta^{cu}\cap \Phi(U)\cap\bigcup_{V\in(-\hat\delta, \hat\delta)}\Phi(\mathbb{S}_V)
  \end{align*}
  under the iteration by $\mathcal{G}$ approach the $xz$ plane in the $C^1$ topology. Equivalently, as $\eta$ tends to the origin,
  \begin{align*}
  W_\eta^{cu}\cap \Phi(U)\cap\bigcup_{V\in (-\hat\delta, \hat\delta)}\Phi(\mathbb{S}_V)
  \end{align*}
  approaches the $xz$ plane in the $C^1$ topology. This is slightly coarser than what we need. To improve this result, we consider the action of $(\mathcal{G}, D\mathcal{G})$ on the tangent bundle of $W_\eta^{cu}$, $TW_\eta^{cu}$. Elementary computation shows that at the origin $(\mathbf{0},\mathbf{0})\in\R^3\times\R^3$,
  \begin{align*}
  \renewcommand*{\arraystretch}{1.5}
   D(\mathcal{G}, D\mathcal{G})(\mathbf{0},\mathbf{0}) = \left(\begin{array}{c|c}
    D\mathcal{G}_\mathbf{0} & 0\\
    \hline
    0 & D\mathcal{G}_\mathbf{0}
   \end{array}\right).
  \end{align*}
  Thus near $(\mathbf{0}, \mathbf{0})$, we can apply to $(\mathcal{G}, D\mathcal{G})$ a cone condition analogous to the one for $\mathcal{G}$ to conclude that $TW_\eta^{cu}$ approaches the tangent bundle of the $xz$ plane as $\eta$ approaches the origin; in other words, $W_\eta^{cu}$ approaches the $xz$ plane in the $C^2$ topology.
 \end{proof}

 \begin{lem}\label{lem:annoying}
  Suppose $G = (G_x, G_y, G_z): \R^3\rightarrow \R^3$ is a $C^2$ diffeomorphism. Let $M > 0$, $C > 1$. Then there exists $\xi_0 \in (0, \lambda-1)$, with $\lambda$ as in \eqref{eq:differential}, such that for all $\xi\in(0, \xi_0)$ the following holds.

  Suppose that $G$ satisfies the following conditions.
  \begin{enumerate}[(i)]\itemsep0.5em
   \item $\norm{\nabla\partial_\beta G_\alpha} < M$ for $\alpha, \beta\in \set{x,y,z}$.

   \item $G$ is $\xi$-close to $D\mathcal{G}_\mathbf{0}$ from \eqref{eq:differential} in the $C^1$ topology.
  \end{enumerate}
  Assume also that $\gamma = (\gamma_x,\gamma_y,\gamma_z)$ is a $C^2$ curve in $\R^3$ that satisfies the following conditions.
  \begin{enumerate}[(1)]\itemsep0.5em
   \item $\norm{\gamma'(t)} < C$ for all $t$;

   \item $\gamma_x(t) = t$;

   \item $\frac{\lambda - \xi - 1}{4}\abs{\gamma_z''(t)} > 3MC^2$ for all $t$;

   \item $\frac{\lambda - \xi - 1}{4}\abs{\gamma_z''(t)} > \xi\abs{\gamma_y''(t)}$ for all $t$.
  \end{enumerate}
  Then we have
  \begin{align}\label{eq:arb-6}
   \abs{\frac{d^2}{dt^2}G_z\circ\gamma(t)} > \abs{\gamma_z''(t)} + \frac{\lambda - \xi - 1}{2}\abs{\gamma_z''(t)}\hspace{2mm}\text{for all}\hspace{2mm}t
  \end{align}
  and
  \begin{align}\label{eq:arb-7}
   \frac{\lambda-\xi - 1}{4}\abs{\frac{d^2}{dt^2}G_z\circ\gamma(t)}> \xi\abs{\frac{d^2}{dt^2}G_y\circ\gamma(t)} + \frac{(\lambda - 1)^2}{8}\abs{\gamma_z''(t)}\hspace{2mm}\text{for all}\hspace{2mm} t.
  \end{align}
 \end{lem}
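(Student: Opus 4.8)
\emph{Proof idea.} The plan is a direct chain-rule computation in which the hypotheses (i)--(ii) on $G$ and (1)--(4) on $\gamma$ have been calibrated precisely so that, once $\xi$ is small, the ``gradient part'' of $\frac{d^2}{dt^2}(G_z\circ\gamma)$ dominates everything else.

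First I would write, for $H\in\set{G_y,G_z}$,
\begin{align*}
\frac{d^2}{dt^2}(H\circ\gamma)(t)=\sum_{\alpha,\beta}\partial_\alpha\partial_\beta H(\gamma(t))\,\gamma_\alpha'(t)\gamma_\beta'(t)+\sum_{\alpha}\partial_\alpha H(\gamma(t))\,\gamma_\alpha''(t),
\end{align*}
and call the two summands the \emph{Hessian term} and the \emph{gradient term}. By (i), the bound $\norm{\gamma'}<C$ from (1), and $\gamma_x'\equiv 1$ from (2), the Hessian term has absolute value at most $M\big(\sum_\alpha\abs{\gamma_\alpha'}\big)^2<3MC^2$. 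Since $\gamma_x''\equiv 0$ by (2), the gradient term is $\partial_yH\cdot\gamma_y''+\partial_zH\cdot\gamma_z''$, and the $C^1$-closeness (ii) to $D\mathcal{G}_\mathbf{0}$ from \eqref{eq:differential} gives $\partial_zG_z>\lambda-\xi$, $\abs{\partial_yG_z}<\xi$, $\abs{\partial_zG_y}<\xi$, and $\abs{\partial_yG_y}<\tfrac1\lambda+\xi$. Note also that (3) forces $\abs{\gamma_z''}>0$, and that $\xi<\xi_0<\lambda-1$ makes $\lambda-\xi-1>0$, so all divisions below are legitimate.

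For \eqref{eq:arb-6}: the gradient term of $G_z\circ\gamma$ has absolute value at least $(\lambda-\xi)\abs{\gamma_z''}-\xi\abs{\gamma_y''}$, which by (4) exceeds $\tfrac{3(\lambda-\xi)+1}{4}\abs{\gamma_z''}$; subtracting the Hessian term, which by (3) is smaller than $\tfrac{\lambda-\xi-1}{4}\abs{\gamma_z''}$, yields $\abs{\frac{d^2}{dt^2}(G_z\circ\gamma)}>\tfrac{\lambda-\xi+1}{2}\abs{\gamma_z''}$, and the right-hand side equals $\abs{\gamma_z''}+\tfrac{\lambda-\xi-1}{2}\abs{\gamma_z''}$, which is \eqref{eq:arb-6}. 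For \eqref{eq:arb-7}, I would bound $\xi\abs{\frac{d^2}{dt^2}(G_y\circ\gamma)}$ from above by $\xi\big(3MC^2+(\tfrac1\lambda+\xi)\abs{\gamma_y''}+\xi\abs{\gamma_z''}\big)$ and use (3) and (4) again (now as $3\xi MC^2<\xi\tfrac{\lambda-\xi-1}{4}\abs{\gamma_z''}$ and $\xi\abs{\gamma_y''}<\tfrac{\lambda-\xi-1}{4}\abs{\gamma_z''}$) to turn this into a single multiple of $\abs{\gamma_z''}$; meanwhile the bound just proved gives $\tfrac{\lambda-\xi-1}{4}\abs{\frac{d^2}{dt^2}(G_z\circ\gamma)}>\tfrac{(\lambda-\xi)^2-1}{8}\abs{\gamma_z''}$. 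Cancelling $\abs{\gamma_z''}$ reduces \eqref{eq:arb-7} to a polynomial inequality in $\xi$ that at $\xi=0$ reads $\tfrac{\lambda^2-1}{8}>\tfrac{\lambda-1}{4\lambda}+\tfrac{(\lambda-1)^2}{8}$, i.e.\ $\tfrac{\lambda-1}{4}>\tfrac{\lambda-1}{4\lambda}$, i.e.\ $\lambda>1$ --- which is true. By continuity there is $\xi_0\in(0,\lambda-1)$ for which this inequality, and hence \eqref{eq:arb-7}, holds for every $\xi\in(0,\xi_0)$; shrinking $\xi_0$ if necessary secures \eqref{eq:arb-6} simultaneously.

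The computation is entirely routine, so there is no genuine obstacle here: the only work is the bookkeeping of the constants, and the single substantive point is that the estimates are arranged so that, in the limit $\xi\to 0$, the target inequality collapses to $\lambda>1$, which the golden mean $\lambda=\tfrac{\sqrt5+1}{2}$ satisfies. Accordingly no special value of $\xi_0$ is needed beyond ``small enough, and less than $\lambda-1$,'' and hypotheses (1)--(4) are exactly the linear and quadratic bounds on $\gamma$ that force the gradient term to dominate the Hessian term after one application of $G$.
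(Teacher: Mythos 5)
Your proof is correct and follows essentially the same route as the paper: a chain-rule split into a Hessian part (bounded by $3MC^2$ via (i), (1), (2)) and a gradient part (controlled through the $C^1$-closeness to $D\mathcal{G}_\mathbf{0}$), with hypotheses (3)–(4) used to absorb the error terms into multiples of $\abs{\gamma_z''}$, and a final continuity-in-$\xi$ argument whose limiting inequality has the positive margin $\bigl(1-\tfrac1\lambda\bigr)\tfrac{\lambda-1}{4}$, i.e.\ reduces to $\lambda>1$. The only cosmetic difference is that you package the lower bound for $\tfrac{\lambda-\xi-1}{4}\abs{\tfrac{d^2}{dt^2}(G_z\circ\gamma)}$ as $\tfrac{(\lambda-\xi)^2-1}{8}\abs{\gamma_z''}$, which is algebraically identical to the paper's $\tfrac{\lambda-\xi-1}{4}+\tfrac{(\lambda-\xi-1)^2}{8}$ coefficient.
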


 \begin{proof}

 With the hypothesis (2) in mind, a direct computation gives
  \begin{align}\label{eq:aux1}
  \begin{split}
   \frac{d^2}{dt^2}G_z\circ\gamma(t) = &\nabla\partial_x G_z\cdot\gamma' + \gamma_y'\nabla\partial_y G_z\cdot\gamma' \\
   + &\gamma_y''\partial_yG_z + \gamma_z'\nabla\partial_zG_z\cdot\gamma' + \gamma_z''\partial_zG_z.
   \end{split}
  \end{align}
  An application of the triangle and the Cauchy-Schwartz inequalities together with our hypothesis (i) and (1) gives
  \begin{align*}
  \abs{\nabla\partial_x G_z\cdot\gamma'+\gamma_y'\nabla\partial_y G_z\cdot\gamma'+\gamma_z'\nabla\partial_zG_z\cdot\gamma'}\leq 3MC^2.
  \end{align*}
  On the other hand, since by hypothesis (2) $G$ is $\xi$ close to $D\mathcal{G}_{\mathbf{0}}$ in the $C^1$ topology, we have
  \begin{align*}
  \abs{\gamma_y''\partial_yG_z}\leq \xi\abs{\gamma_y''}\hspace{2mm}\text{ and }\hspace{2mm}\abs{\gamma_z''\partial_zG_z}\leq (\lambda-\xi)\abs{\gamma_z''}.
  \end{align*}
  Thus from \eqref{eq:aux1} together with (3) and (4) we get
  \begin{align}\label{eq:arb-8}
  \begin{split}
   \abs{\frac{d^2}{dt^2}G_z\circ\gamma(t)} & > \abs{\gamma_z''} + (\lambda - \xi - 1)\abs{\gamma_z''} - 3MC^2 - \xi\abs{\gamma_y''} \\
   & > \abs{\gamma_z''} + \frac{\lambda - \xi - 1}{2}\abs{\gamma_z''},
   \end{split}
  \end{align}
  and \eqref{eq:arb-6} follows.

  Now observe that
  \begin{align*}
   \frac{d^2}{dt^2}G_y\circ\gamma(t) = &\nabla\partial_x G_y\cdot \gamma' + \gamma_y'\nabla\partial_yG_y\cdot\gamma' \\
   + &\gamma_y''\partial_yG_y + \gamma_z'\nabla\partial_zG_y\cdot\gamma' + \gamma_z'' \partial_zG_y,
  \end{align*}
  which by the hypotheses (i), (ii), (1), and (2) gives
  \begin{align}\label{eq:arb-9}
   \abs{\frac{d^2}{dt^2}G_y\circ\gamma(t)} < 3MC^2 + \left(\frac{1}{\lambda} + \xi\right)\abs{\gamma_y''} + \xi\abs{\gamma_z''}.
  \end{align}
  From the inequalities \eqref{eq:arb-8} and \eqref{eq:arb-9} in conjunction with the hypotheses (3) and (4), we get
  \begin{align*}
   & \frac{\lambda - \xi - 1}{4}\abs{\frac{d^2}{dt^2}G_z\circ\gamma(t)} - \xi \abs{\frac{d^2}{dt^2}G_y\circ\gamma(t)}\\
   & > \left(\frac{\lambda - \xi - 1}{4} + \frac{(\lambda - \xi - 1)^2}{8}\right)\abs{\gamma_z''} - 3\xi MC^2 - \xi\left(\frac{1}{\lambda} + \xi\right)\abs{\gamma_y''} - \xi^2\abs{\gamma_z''}\\
   & > \left(\frac{\lambda - \xi - 1}{4} + \frac{(\lambda - \xi - 1)^2}{8} - \frac{\xi(\lambda - \xi - 1)}{4} - \frac{\lambda - \xi - 1}{4}\left(\frac{1}{\lambda} + \xi\right) - \xi^2\right)\abs{\gamma_z''}
  \end{align*}
  Now observe that the right most side of the inequality above tends to
  \begin{align*}
   \left(\left(1 - \frac{1}{\lambda}\right)\frac{\lambda - 1}{4} + \frac{(\lambda - 1)^2}{8}\right)\abs{\gamma_z''}
  \end{align*}
  as $\xi$ tends to zero. So for all $\xi > 0$ sufficiently small,
  \begin{align*}
  \frac{\lambda - \xi - 1}{4}\abs{\frac{d^2}{dt^2}G_z\circ\gamma(t)} - \xi \abs{\frac{d^2}{dt^2}G_y\circ\gamma(t)} > \frac{(\lambda - 1)^2}{8}\abs{\gamma_z''(t)},
  \end{align*}
  and \eqref{eq:arb-7} follows.
 \end{proof}

 \begin{lem}\label{lem:annoying-2}
  Let $M > 0$, $C_0 > 0$, and $\Delta_0 = \frac{1}{4}$. There exist $\xi_0 = \xi_0(C_0, \Delta_0) > 0$ not larger than $\xi_0$ from Lemma \ref{lem:annoying} such that for all $\xi \in (0, \xi_0)$ and $\Delta\in(0,\Delta_0)$ we have the following.

  Suppose that $G$ is as in Lemma \ref{lem:annoying}. Assume that $\gamma$ is also as in Lemma \ref{lem:annoying}, with $C = 1 + C_0 + \Delta_0$ and the additional assumption that $\abs{\gamma_y'(t)} < C_0$ and $\abs{\gamma_z'(t)} < \Delta$ for all $t$. Assume that for some $m\in\N$ and all $n\in\set{1, \dots, m}$ the following hold.

  \begin{itemize}\itemsep0.5em

  \item $G^n\circ\gamma$ can be reparameterized as $\gamma^{(n)}(s) = (s, \gamma_y^{(n)}(s), \gamma_z^{(n)}(s))$ (that is, $\abs{\frac{d}{dt}G_x^n\circ\gamma} > 0$), with $\gamma^{(0)} = \gamma$.

  \item $\abs{\frac{d}{ds}\gamma_y^{(n)}(s)} < C_0$ and $\abs{\frac{d}{ds}\gamma_z^{(n)}(s)} < \Delta$ for all $s$.
  \end{itemize}

  Then we have $\abs{\frac{d^2}{ds^2}\gamma_z^{(n)}(\tilde{s})} > \abs{\frac{d^2}{dt^2}\gamma_z^{(n - 1)}(\tilde{t})}$ for all $n\in\set{1,\dots,m-1}$ and $\tilde{t}$, $\tilde{s}$ such that $\gamma^{(n)}(\tilde{s})=\gamma^{(n-1)}(\tilde{t})$.
 \end{lem}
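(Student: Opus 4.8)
The plan is to show that the pointwise curvature amplification of Lemma~\ref{lem:annoying}, stated there in terms of the \emph{fixed} parameter $t$, compounds along the orbit once one rewrites each iterate as a graph over the $x$-axis. Lemma~\ref{lem:annoying} already provides, for a curve $\gamma^{(j)}$ satisfying its hypotheses (1)--(4), both a strict increase of $\abs{(G_z\circ\gamma^{(j)})''}$ over $\abs{(\gamma_z^{(j)})''}$ with a definite gain (inequality~\eqref{eq:arb-6}) and the companion inequality~\eqref{eq:arb-7}, which bounds $\abs{(G_y\circ\gamma^{(j)})''}$ against $\abs{(G_z\circ\gamma^{(j)})''}$ \emph{with slack} $\tfrac{(\lambda-1)^2}{8}\abs{(\gamma_z^{(j)})''}$. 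The content of Lemma~\ref{lem:annoying-2} is that, once the curve is nearly horizontal ($\abs{\gamma_y'}<C_0$ bounded, $\abs{\gamma_z'}<\Delta$ small) and $G$ is close enough to the linear model $D\mathcal{G}_{\mathbf{0}}$, re-expressing $G\circ\gamma^{(j)}$ as a graph over the $x$-axis costs only an amount absorbed by these two slacks; so $\abs{\gamma_z''}$ genuinely increases along $\gamma^{(0)},\gamma^{(1)},\dots$, every $\gamma^{(j)}$ continues to satisfy (1)--(4), and the amplification iterates.

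First I would record the reparameterization calculus. With $h(t)\eqdef G_x\circ\gamma^{(j)}(t)$, so that $\gamma^{(j+1)}$ is $G\circ\gamma^{(j)}$ reparameterized by $s=h(t)$, the chain rule gives
\begin{align*}
\frac{d^2}{ds^2}\gamma_\alpha^{(j+1)}=\frac{(G_\alpha\circ\gamma^{(j)})''}{(h')^2}-\frac{(G_\alpha\circ\gamma^{(j)})'\,h''}{(h')^3},\qquad\alpha\in\set{y,z}.
\end{align*}
From hypothesis (ii) of Lemma~\ref{lem:annoying} ($G$ is $\xi$-close to $D\mathcal{G}_{\mathbf{0}}$ in $C^1$) together with $\norm{(\gamma^{(j)})'}<C$ one gets $\abs{h'+1}\le\xi C$, hence $(1-\xi C)^2\le(h')^2\le(1+\xi C)^2$; differentiating once more and estimating exactly as in the proof of Lemma~\ref{lem:annoying} (triangle and Cauchy--Schwarz with the $C^2$-bound $M$, then using (3) and (4) to dominate the resulting $3MC^2$ and $\xi\abs{\gamma_y''}$ terms by $\tfrac{\lambda-\xi-1}{4}\abs{\gamma_z''}$) gives $\abs{h''}\le\bigl(\tfrac{\lambda-\xi-1}{2}+\xi\bigr)\abs{(\gamma_z^{(j)})''}$. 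Finally, $C^1$-closeness and the bounds $\abs{(\gamma_x^{(j)})'}=1$, $\abs{(\gamma_y^{(j)})'}<C_0$, $\abs{(\gamma_z^{(j)})'}<\Delta$ yield $\abs{(G_z\circ\gamma^{(j)})'}\le\xi(1+C_0)+(\lambda+\xi)\Delta\eqdef\rho_1$ (small for $\xi,\Delta$ small, with limit $\lambda\Delta$ as $\xi\to0$) and $\abs{(G_y\circ\gamma^{(j)})'}\le\xi(1+\Delta)+(\tfrac{1}{\lambda}+\xi)C_0\eqdef\rho_2$ (merely bounded, in terms of $C_0$).

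Then I would run the induction, the base case $\gamma^{(0)}=\gamma$ being precisely the hypothesis. Assuming $\gamma^{(j)}$ (for $0\le j\le m-2$) satisfies (1)--(4), apply Lemma~\ref{lem:annoying} to it to obtain \eqref{eq:arb-6} and \eqref{eq:arb-7} for $G\circ\gamma^{(j)}$, and substitute these, with the bounds above, into the chain-rule identity. For $\alpha=z$ the leading term is at least $(1+\tfrac{\lambda-\xi-1}{2})\abs{(\gamma_z^{(j)})''}/(1+\xi C)^2$ and the correction is at most $\rho_1(\tfrac{\lambda-\xi-1}{2}+\xi)\abs{(\gamma_z^{(j)})''}/(1-\xi C)^3$, so the coefficient relating $\abs{(\gamma_z^{(j+1)})''}$ to $\abs{(\gamma_z^{(j)})''}$ tends, as $\xi\to0$, to $1+\tfrac{\lambda-1}{2}(1-\lambda\Delta)>1$ (using $\lambda\Delta<1$); hence for $\xi_0=\xi_0(C_0,\Delta_0)$ small enough, $\abs{(\gamma_z^{(j+1)})''}>\abs{(\gamma_z^{(j)})''}$ at corresponding points, which is already the assertion for $n=j+1$ and keeps $\abs{(\gamma_z^{(j+1)})''}$ above the threshold in (3). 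Hypotheses (1) and (2) hold for $\gamma^{(j+1)}$ by the standing assumptions of the lemma, and (4) for $\gamma^{(j+1)}$ follows by combining the $\alpha=y$ and $\alpha=z$ identities with \eqref{eq:arb-7}: the slack $\tfrac{(\lambda-1)^2}{8}\abs{(\gamma_z^{(j)})''}$ there dominates the reparameterization error $(\tfrac{\lambda-\xi-1}{4}\rho_1+\xi\rho_2)\abs{h''}/\abs{h'}^3$ once $\xi$ is small, the limiting requirement again being $\lambda\Delta<1$. Both requirements hold for every $\Delta<\Delta_0=\tfrac14$ because $\tfrac14<1/\lambda=\lambda-1$. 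So $\gamma^{(j+1)}$ satisfies (1)--(4), the induction closes, and iterating over $j$ yields the conclusion for all $n\in\set{1,\dots,m-1}$.

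The main obstacle is this last step: one must check that \emph{both} reparameterization errors --- the one degrading the curvature gain of \eqref{eq:arb-6} and the one degrading the propagation of \eqref{eq:arb-7} --- are dominated by the slacks built into Lemma~\ref{lem:annoying} \emph{uniformly over all} $\Delta<\Delta_0$, not just in the limit $\Delta\to0$. This is what pins $\Delta_0$ to a value below $1/\lambda=\tfrac{\sqrt{5}-1}{2}$ (the convenient choice being $\tfrac14$), forces $\xi_0$ to depend on $C_0$ and $\Delta_0$, and reduces in the end to the golden-mean inequality $\lambda\Delta<1$; everything else is routine chain-rule bookkeeping layered on the estimates already carried out in Lemma~\ref{lem:annoying}.
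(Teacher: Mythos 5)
Your proposal is correct and follows essentially the same route as the paper's proof: apply Lemma \ref{lem:annoying} to $\gamma^{(n-1)}$, control the reparameterization via the chain-rule identity together with the bounds \eqref{eq:arb-12}--\eqref{eq:arb-13} on $\frac{d}{dt}G_x\circ\gamma^{(n-1)}$ and its second derivative, verify that conditions (3)--(4) of Lemma \ref{lem:annoying} propagate to $\gamma^{(n)}$ along with the strict curvature increase, and induct. The only (harmless) deviation is that you bound $\abs{(G_z\circ\gamma^{(n-1)})'}$ by roughly $\lambda\Delta$ via $C^1$-closeness to $D\mathcal{G}_{\mathbf{0}}$, whereas the paper uses the standing hypothesis $\abs{\frac{d}{ds}\gamma_z^{(n)}}<\Delta$ to get roughly $\Delta$, so your smallness requirement $\lambda\Delta_0<1$ is slightly stronger than the paper's but still comfortably met by $\Delta_0=\tfrac14$.
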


\begin{rem}\label{rem:notation}
Each time that we take the image of $\gamma^{(n-1)}$ under $G$, we \emph{reparameterize} $G\circ\gamma^{(n-1)}(t)$ to obtain $\gamma^{(n)}(s)$ (i.e. the curve whose image coincides with that of $G\circ\gamma^{(n-1)}$, but $\frac{d}{ds}\gamma_x^{(n)}(s) = 1$). In this context, $s$ is always the parameter of $\gamma^{(n)}$ (with the above property), and $t$ is the parameter of $\gamma^{(n-1)}$, also with the property that $\frac{d}{dt}\gamma_x^{(n-1)}(t) = 1$.
\end{rem}

 \begin{proof}[Proof of Lemma \ref{lem:annoying-2}]

From the hypothesis we have for all $n\in\set{1,\dots,m}$,
\begin{align*}
 \abs{\frac{d}{ds}\gamma_y^{(n)}(\tilde{s})} < C_0\hspace{2mm}\text{ and }\hspace{2mm}\abs{\frac{d}{ds}\gamma_z^{(n)}(\tilde{s})}< \Delta
\end{align*}
for all $\tilde{s}$. Hence for all $n\in\set{1,\dots,m}$, $\gamma^{(n)}$ satisfies the first two conditions of Lemma \ref{lem:annoying}:
\begin{itemize}\itemsep0.5em
   \item $\norm{\frac{d}{ds}\gamma^{(n)}(\tilde{s})} < C$ for all $\tilde{s}$;
   \item $\gamma_x^{(n)}(s) = s$.
  \end{itemize}

  \begin{claim}\label{c:annoing-2-c2}
   If for $n\in\set{1,\dots,m}$, $\gamma^{(n-1)}$ satisfies the last two conditions of Lemma \ref{lem:annoying}, then $\gamma^{(n)}$ also satisfies those conditions; that is,
   \begin{enumerate}\itemsep0.5em
    \item $\frac{\lambda - \xi - 1}{4}\abs{\frac{d^2}{ds^2}\gamma_z^{(n)}(\tilde{s})} > 3MC^2$ for all $\tilde{s}$;
    \item $\frac{\lambda - \xi - 1}{4}\abs{\frac{d^2}{ds^2}\gamma_z^{(n)}(\tilde{s})} > \xi\abs{\frac{d^2}{ds^2}\gamma_y^{(n)}(\tilde{s})}$ for all $\tilde{s}$;
   \end{enumerate}
   and, moreover,
   \begin{enumerate}\itemsep0.5em
   \setcounter{enumi}{2}

   \item $\abs{\frac{d^2}{ds^2}\gamma_z^{(n)}(\tilde{s})} > \abs{\frac{d^2}{dt^2}\gamma_z^{(n-1)}(\tilde{t})}$ for all $\tilde{t}$, where $\tilde{t}$ and $\tilde{s}$ are such that $\gamma^{(n)}(\tilde{s})=G\circ\gamma^{(n-1)}(\tilde{t})$.
   \end{enumerate}
  \end{claim}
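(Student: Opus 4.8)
The plan is to run the single inductive step in two stages: first feed $\gamma^{(n-1)}$ into Lemma~\ref{lem:annoying}, and then carefully track how the reparameterization $G\circ\gamma^{(n-1)}\mapsto\gamma^{(n)}$ distorts second derivatives. The first step is to observe that $\gamma^{(n-1)}$ satisfies all four hypotheses of Lemma~\ref{lem:annoying}: the first two ($\norm{(\gamma^{(n-1)})'}<C$ and $\gamma_x^{(n-1)}(t)=t$) follow from the reparameterization convention together with the standing bounds $\abs{(\gamma_y^{(n-1)})'}<C_0$ and $\abs{(\gamma_z^{(n-1)})'}<\Delta$, while conditions (3) and (4) are exactly the hypothesis of the present claim. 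Applying Lemma~\ref{lem:annoying} to $G$ and $\gamma^{(n-1)}$ then yields the two estimates \eqref{eq:arb-6} and \eqref{eq:arb-7} for the (not yet reparameterized) curve $G\circ\gamma^{(n-1)}$, and the computation in its proof (see \eqref{eq:arb-9}) also supplies the bound $\abs{\frac{d^2}{dt^2}G_y\circ\gamma^{(n-1)}}<3MC^2+(\frac{1}{\lambda}+\xi)\abs{(\gamma_y^{(n-1)})''}+\xi\abs{(\gamma_z^{(n-1)})''}$.

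Next I would set up the change of variables. Put $u(t)\eqdef G_x\circ\gamma^{(n-1)}(t)$, so that $\tilde{s}=u(\tilde{t})$ and, for $\Psi\in\set{G_y\circ\gamma^{(n-1)},\,G_z\circ\gamma^{(n-1)}}$, the corresponding component of $\gamma^{(n)}$ equals $\Psi\circ u^{-1}$ and
\begin{align*}
\frac{d^2}{ds^2}\bigl(\Psi\circ u^{-1}\bigr)(\tilde{s})=\frac{\Psi''(\tilde{t})\,u'(\tilde{t})-\Psi'(\tilde{t})\,u''(\tilde{t})}{u'(\tilde{t})^3}.
\end{align*}
Because $G$ is $\xi$-close in $C^1$ to $D\mathcal{G}_\mathbf{0}$, whose top row is $(-1,0,0)$, one gets $\abs{u'+1}<\xi C$, so $u'$ is bounded away from $0$ and $\abs{u'}^{\pm1}=1+O(\xi)$. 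Expanding $u''$ by the computation that produced \eqref{eq:aux1}, now applied to $G_x$, and using hypothesis (i) together with $\abs{\partial_yG_x},\abs{\partial_zG_x}<\xi$, gives $\abs{u''}\le 3MC^2+\xi\abs{(\gamma_y^{(n-1)})''}+\xi\abs{(\gamma_z^{(n-1)})''}$, which by conditions (3) and (4) on $\gamma^{(n-1)}$ is at most $\bigl(\tfrac{\lambda-\xi-1}{2}+\xi\bigr)\abs{(\gamma_z^{(n-1)})''}$. Finally the first derivatives $\abs{(G_z\circ\gamma^{(n-1)})'}\le\xi+\xi C_0+(\lambda+\xi)\Delta$ and $\abs{(G_y\circ\gamma^{(n-1)})'}\le\xi+(\tfrac{1}{\lambda}+\xi)C_0+\xi\Delta$ are uniformly bounded since $\Delta<\Delta_0=\tfrac14$ (the first one by roughly $\lambda/4<\tfrac12$ as $\xi\to0$).

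With these estimates in hand, (3) follows: applying the displayed formula with $\Psi=G_z\circ\gamma^{(n-1)}$, inserting \eqref{eq:arb-6} for $\abs{(G_z\circ\gamma^{(n-1)})''}$ and using the bounds on $u'$, $u''$ and $\abs{(G_z\circ\gamma^{(n-1)})'}$, the coefficient of $\abs{(\gamma_z^{(n-1)})''}$ on the right converges, as $\xi\to0$ with $\Delta<\Delta_0$ fixed, to a value exceeding $1+\tfrac{\lambda-1}{4}>1$; hence, shrinking $\xi_0$ if necessary (keeping it below the $\xi_0$ of Lemma~\ref{lem:annoying}), $\abs{\frac{d^2}{ds^2}\gamma_z^{(n)}}>\abs{(\gamma_z^{(n-1)})''}$ for all $\xi<\xi_0$ and all $\Delta<\Delta_0$, which is (3). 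Assertion (1) is then immediate: $\tfrac{\lambda-\xi-1}{4}\abs{\frac{d^2}{ds^2}\gamma_z^{(n)}}>\tfrac{\lambda-\xi-1}{4}\abs{(\gamma_z^{(n-1)})''}>3MC^2$ by condition (3) on $\gamma^{(n-1)}$. For (2) I would divide the displayed formulas for the $z$- and $y$-components, so that the common factor $u'(\tilde{t})^3$ cancels and it suffices to show $\tfrac{\lambda-\xi-1}{4}\abs{A_zu'-B_zu''}>\xi\abs{A_yu'-B_yu''}$, where $A_\bullet,B_\bullet$ denote the second and first derivatives of $G_\bullet\circ\gamma^{(n-1)}$ at $\tilde{t}$. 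Multiplying \eqref{eq:arb-7} through by $\abs{u'}\ge1-\xi C$ produces the term $\xi\abs{A_y}\abs{u'}$ plus a surplus $\tfrac{(\lambda-1)^2}{8}\abs{(\gamma_z^{(n-1)})''}(1-\xi C)$; the term $\xi\abs{A_y}\abs{u'}$ also appears in the upper bound $\xi\abs{A_yu'-B_yu''}\le\xi\abs{A_y}\abs{u'}+\xi\abs{B_y}\abs{u''}$ and cancels, leaving the surplus to dominate the remaining $u''$-terms $\tfrac{\lambda-\xi-1}{4}\abs{B_z}\abs{u''}$ and $\xi\abs{B_y}\abs{u''}$, which by the bounds above tend respectively to at most $\tfrac{\lambda\Delta(\lambda-1)^2}{8}\abs{(\gamma_z^{(n-1)})''}<\tfrac{(\lambda-1)^2}{16}\abs{(\gamma_z^{(n-1)})''}$ and to $0$ as $\xi\to0$. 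Hence for $\xi_0$ small enough (uniformly in $\Delta<\Delta_0$), (2) holds.

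The step I expect to be the main obstacle is precisely this last bookkeeping. The reparameterization $u$ is $C^1$-close to the linear model but not $C^2$-close — $\abs{u''}$ is only comparable to $\abs{(\gamma_z^{(n-1)})''}$, never small — and $\abs{(\gamma_y^{(n-1)})''}$ is not controlled independently of $\xi$ at all (by condition (4) it may be as large as $\tfrac{\lambda-\xi-1}{4\xi}\abs{(\gamma_z^{(n-1)})''}$, and the bound \eqref{eq:arb-9} inherits this $O(1/\xi)$ blow-up). One must therefore verify that every term produced by the change of variables in which this blow-up occurs carries enough powers of $\xi$ to kill it — the contribution $\xi\abs{(\gamma_y^{(n-1)})''}$ to $\abs{u''}$ being tamed exactly by hypothesis (4), and the contribution $\xi\abs{(G_y\circ\gamma^{(n-1)})''}$ cancelling between the two sides of the inequality for (2) — and that all thresholds are uniform over $\Delta\in(0,\Delta_0)$. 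This is exactly why Lemma~\ref{lem:annoying} is phrased with the definite surpluses $\tfrac{\lambda-\xi-1}{2}\abs{\gamma_z''}$ in \eqref{eq:arb-6} and $\tfrac{(\lambda-1)^2}{8}\abs{\gamma_z''}$ in \eqref{eq:arb-7}, rather than bare strict inequalities: those surpluses are what the reparameterization errors are charged against.
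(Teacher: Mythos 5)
Your proposal is correct and follows essentially the same route as the paper's proof: apply Lemma \ref{lem:annoying} to $\gamma^{(n-1)}$, use the quotient-rule formula for the reparameterized second derivatives (the paper's \eqref{eq:arb-14}), bound $\abs{u'}$ within $\xi C$ of $1$ and $\abs{u''}$ by a multiple of $\abs{\gamma_z''^{(n-1)}}$ via hypotheses (3)--(4) (the paper's \eqref{eq:arb-12}--\eqref{eq:arb-13}), and charge the reparameterization errors against the surpluses in \eqref{eq:arb-6} and \eqref{eq:arb-7}, uniformly in $\Delta<\Delta_0$ after shrinking $\xi_0$. The only differences are inessential bookkeeping choices (e.g.\ bounding $\abs{(G_z\circ\gamma^{(n-1)})'}$ by roughly $\lambda\Delta$ directly rather than by $\Delta$ via the hypothesis on $\frac{d}{ds}\gamma_z^{(n)}$), which the available margins absorb.
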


   Observe that (3) implies (1). Let us first establish (3), and then prove (2).

   \begin{proof}[Proof of Claim \ref{claim:arb2} (3)]
   Since $\gamma^{(n-1)}$ satisfies the last two conditions of Lemma \ref{lem:annoying}, and as we have seen, for every $k\in\set{1,\dots,m}$, $\gamma^{(k)}$ satisfies the first two conditions of Lemma \ref{lem:annoying}, $\gamma^{(n-1)}$ satisfies the hypothesis of Lemma \ref{lem:annoying}. Application of the lemma then gives
   \begin{align*}
    \abs{\frac{d^2}{dt^2}G_z\circ\gamma^{(n-1)}(\tilde{t})} > \abs{\frac{d^2}{dt^2}\gamma_z^{(n-1)}(\tilde{t})} + \frac{\lambda - \xi - 1}{2}\abs{\frac{d^2}{dt^2}\gamma_z^{(n-1)}(\tilde{t})}
   \end{align*}
   and
   \begin{align*}
    \frac{\lambda-\xi - 1}{4}\abs{\frac{d^2}{dt^2}G_z\circ\gamma^{(n-1)}(\tilde{t})} > \xi\abs{\frac{d^2}{dt^2}G_y\circ\gamma^{(n-1)}(\tilde{t})}
    + \frac{(\lambda - 1)^2}{8}\abs{\frac{d^2}{dt^2}\gamma^{(n-1)}_z(\tilde{t})}
   \end{align*}
   for all $\tilde{t}$. We now deduce the conclusion of Claim \ref{c:annoing-2-c2} (3) from these two inequalities.

Observe that
   \begin{align*}
   \gamma_z^{(n)}(\tilde{s})=G_z\circ \gamma^{(n-1)}(\tilde{t})\hspace{2mm}\text{ with }\hspace{2mm}\tilde{t}=\left(G_x\circ \gamma^{(n-1)}\right)^{-1}(\tilde{s}).
   \end{align*}
   So we have
   \begin{align}\label{eq:arb-14}
   \begin{split}
    \frac{d^2}{ds^2}\gamma_z^{(n)}(\tilde{s}) = &\\
    &\left(\frac{d}{dt}G_x\circ\gamma^{(n-1)}(\tilde{t})\right)^{-3}\frac{d^2}{dt^2}G_z\circ\gamma^{(n-1)}(\tilde{t})\frac{d}{dt}G_x\circ\gamma^{(n-1)}(\tilde{t})\\
    - &\left(\frac{d}{dt}G_x\circ\gamma^{(n-1)}(\tilde{t})\right)^{-3}\frac{d}{dt}G_z\circ\gamma^{(n-1)}(\tilde{t})\frac{d^2}{dt^2}G_x\circ\gamma^{(n-1)}(\tilde{t}).
    \end{split}
   \end{align}
   Keeping in mind that $\gamma_x^{(n-1)}(\tilde{t}) = \tilde{t}$, let us estimate from above the second derivative of $G_x\circ\gamma^{(n-1)}$ at $\tilde{t}$. We have
   \begin{align*}
    \abs{\frac{d^2}{dt^2}G_x\circ\gamma^{(n-1)}(\tilde{t})}- 3MC^2 = &\\
    &\left|\nabla\partial_xG_x(\gamma^{(n-1)}(\tilde{t}))\cdot\frac{d}{dt}\gamma^{(n-1)}(\tilde{t})\right. + \\
    &\frac{d}{dt}\gamma_y^{(n-1)}(\tilde{t})\nabla\partial_yG_x(\gamma^{(n-1)}(\tilde{t}))\cdot\frac{d}{dt}\gamma^{(n-1)}(\tilde{t})\\
    &+ \frac{d^2}{dt^2}\gamma_y^{(n-1)}(\tilde{t})\partial_yG_x(\gamma^{(n-1)}(\tilde{t})) \\
    &+ \frac{d}{dt}\gamma_z^{(n-1)}(\tilde{t})\nabla\partial_zG_x(\gamma^{(n-1)}(\tilde{t}))\cdot\frac{d}{dt}\gamma^{(n-1)}(\tilde{t}) \\
    &+ \left.\frac{d^2}{dt^2}\gamma_z^{(n-1)}(\tilde{t})\partial_zG_x(\gamma^{(n-1)}(\tilde{t}))\right|\\
    & - 3MC^2,
   \end{align*}
   and the right side is strictly smaller than
   \begin{align*}
        &\xi\abs{\frac{d^2}{dt^2}\gamma_y^{(n-1)}(\tilde{t})} + \xi\abs{\frac{d^2}{dt^2}\gamma_z^{(n-1)}(\tilde{t})}\\
        < &\frac{\lambda - \xi - 1}{4}\abs{\frac{d^2}{dt^2}\gamma_z^{(n-1)}(\tilde{t})} + \xi\abs{\frac{d^2}{dt^2}\gamma_z^{(n-1)}(\tilde{t})}.
    \end{align*}
  So we have
  \begin{align}\label{eq:arb-13}
  \abs{\frac{d^2}{dt^2}G_x\circ\gamma^{(n-1)}(\tilde{t})} < \frac{\lambda - \xi - 1}{4}\abs{\frac{d^2}{dt^2}\gamma_z^{(n-1)}(\tilde{t})} + \xi\abs{\frac{d^2}{dt^2}\gamma_z^{(n-1)}(\tilde{t})} + 3MC^2.
  \end{align}
   This estimate then gives the following estimate on the second derivative of $\gamma_z^{(n)}$ from below.
   \begin{align}\label{eq:arb-15}
   \begin{split}
    \abs{\frac{d^2}{ds^2}\gamma_z^{(n)}(\tilde{s})} & \geq \abs{\frac{d}{dt}G_x\circ\gamma^{(n-1)}(\tilde{t})}^{-3} \times \\
    &\times \left(\abs{\frac{d^2}{dt^2}G_z\circ\gamma^{(n-1)}(\tilde{t})}\abs{\frac{d}{dt}G_x\circ\gamma^{(n-1)}(\tilde{t})}\right.\\
    &- 3\Delta MC^2\\
    & - \frac{\lambda - \xi - 1}{4}\Delta\abs{\frac{d^2}{dt^2}\gamma_z^{(n-1)}(\tilde{t})}\\
    & \left.- \Delta\xi\abs{\frac{d^2}{dt^2}\gamma_z^{(n-1)}(\tilde{t})}\right),
   \end{split}
   \end{align}
   which is strictly larger than
   \begin{align}\label{eq:arb-16}
   \begin{split}
    & \abs{\frac{d}{dt}G_x\circ\gamma^{(n-1)}(\tilde{t})}^{-3} \times \\
    &\times \left(\abs{\frac{d^2}{dt^2}\gamma_z^{(n-1)}(\tilde{t})}\right.\abs{\frac{d}{dt}G_x\circ\gamma^{(n-1)}(\tilde{t})}\\
    &+ \frac{\lambda - \xi - 1}{2}\abs{\frac{d^2}{dt^2}\gamma_z^{(n-1)}(\tilde{t})}\abs{\frac{d}{dt}G_x\circ\gamma^{(n-1)}(\tilde{t})}\\
    & - 3\Delta MC^2 \\
    &- \frac{\lambda - \xi - 1}{4}\Delta\abs{\frac{d^2}{dt^2}\gamma_z^{(n-1)}(\tilde{t})} \\
    &\left.- \Delta\xi\abs{\frac{d^2}{dt^2}\gamma_z^{(n-1)}(\tilde{t})}\right).
    \end{split}
   \end{align}
   This quantity is larger than $\abs{\frac{d^2}{dt^2}\gamma_z^{(n-1)}(\tilde{t})}$ if and only if
   \begin{align*}
    & \abs{\frac{d}{dt}G_x\circ\gamma^{(n-1)}(\tilde{t})}^{-3} \times\\
    \times & \left(\abs{\frac{d}{dt}G_x\circ\gamma^{(n-1)}(\tilde{t})} + \frac{\lambda - \xi - 1}{2}\abs{\frac{d}{dt}G_x\circ\gamma^{(n-1)}(\tilde{t})}\right. \\
    & \left.- \frac{\lambda - \xi - 1}{4}\Delta - \frac{\lambda - \xi - 1}{4}\Delta - \Delta\xi \right)\\
    & > 1.
   \end{align*}
   Now notice that
   \begin{align}\label{eq:arb-12}
   \begin{split}
    1 + \xi(1 + \Delta_0 + C_0) & > \abs{\frac{d}{dt}G_x\circ\gamma^{(n-1)}(\tilde{t})} \\
    &= \left|\partial_xG_x(\gamma^{(n-1)}(\tilde{t}))\right.\\
    &+ \frac{d}{dt}\gamma_y^{(n-1)}(\tilde{t})\partial_yG_x(\gamma^{(n-1)}(\tilde{t}))\\
    &+ \left.\frac{d}{dt}\gamma_z^{(n-1)}(\tilde{t})\partial_zG_x(\gamma^{(n-1)}(\tilde{t}))\right|\\
    &> 1 - \xi(1 + \Delta_0 + C_0).
   \end{split}
   \end{align}
   Since $\Delta < \Delta_0$, this combined with \eqref{eq:arb-12} ensures that the previous inequality holds provided that
   \begin{align*}
    \frac{u}{v^3}\left(1+ \frac{\lambda - \xi - 1}{2}\right)
    - \frac{1}{u^3}\left(\frac{\Delta_0(\lambda - \xi - 1)}{2} + \Delta_0\xi\right) > 1,
   \end{align*}
   with
   \begin{align*}
   u = 1 - \xi(1 + \Delta_0 + C_0)
   \end{align*}
   and
   \begin{align*}
   v = 1 + \xi(1 + \Delta_0 + C_0).
   \end{align*}
   This inequality holds provided that $\xi_0$ is chosen sufficiently small to begin with (recall that $\Delta_0 = \frac{1}{4}$ is fixed).
   \end{proof}

   Now we proceed to show (2) of Claim \ref{claim:arb2}: $\frac{\lambda - \xi - 1}{4}\abs{\frac{d^2}{ds^2}\gamma_z^{(n)}(\tilde{s})} > \xi\abs{\frac{d^2}{ds^2}\gamma_y^{(n)}(\tilde{s})}$ for all $\tilde{s}$.

   \begin{proof}[Proof of Claim \ref{claim:arb2} (2)]

   The expression for the second derivative of $\gamma_y^{(n)}$ is similar to that from \eqref{eq:arb-14}, with $\gamma_y$ in place of $\gamma_z$. So the desired inequality holds if
   \begin{align*}
    \frac{\lambda - \xi - 1}{4}&\left|\frac{d^2}{dt^2}G_z\circ\gamma^{(n-1)}(\tilde{t})\right.\frac{d}{dt}G_x\circ\gamma^{(n-1)}(\tilde{t})- \left.\frac{d}{dt}G_z\circ\gamma^{(n-1)}(\tilde{t})\frac{d^2}{dt^2}G_x\circ\gamma^{(n-1)}(\tilde{t})\right| \\
    - \xi&\abs{\frac{d^2}{dt^2}G_y\circ\gamma^{(n-1)}(\tilde{t})\frac{d}{dt}G_x\circ\gamma^{(n-1)}(\tilde{t}) - \frac{d}{dt}G_y\circ\gamma^{(n-1)}(\tilde{t})\frac{d^2}{dt^2}G_x\circ\gamma^{(n-1)}(\tilde{t})}\\
    > & 0.
   \end{align*}
   This inequality holds if
   \begin{align*}
    \frac{\lambda - \xi - 1}{4}&\abs{\frac{d^2}{dt^2}G_z\circ\gamma^{(n-1)}(\tilde{t})\frac{d}{dt}G_x\circ\gamma^{(n-1)}(\tilde{t})}\\
    -\frac{\lambda - \xi - 1}{4}&\abs{\frac{d}{dt}G_z\circ\gamma^{(n-1)}(\tilde{t})\frac{d^2}{dt^2}G_x\circ\gamma^{(n-1)}(\tilde{t})}\\
    -\xi&\abs{\frac{d^2}{dt^2}G_y\circ\gamma^{(n-1)}(\tilde{t})\frac{d}{dt}G_x\circ\gamma^{(n-1)}(\tilde{t})}\\
    -\xi&\abs{\frac{d}{dt}G_y\circ\gamma^{(n-1)}(\tilde{t})\frac{d^2}{dt^2}G_x\circ\gamma^{(n-1)}(\tilde{t})} \\
    > &0.
   \end{align*}
   From equations \eqref{eq:arb-12} and \eqref{eq:arb-13}, together with the conditions that are satisfied by $\abs{\frac{d^2}{dt^2}G_z\circ\gamma^{(n-1)}(\tilde{t})}$ and $\abs{\frac{d^2}{dt^2}G_y\circ\gamma^{(n-1)}(\tilde{t})}$, and the fact that $\Delta_0 = \frac{1}{4}$, it follows that the above inequality holds if the following holds:
   \begin{align*}
    &\left(\frac{(\lambda - 1)^2}{8}u - \left(\frac{\lambda - \xi - 1}{16} + \xi C_0\right)\left(\frac{\lambda - \xi - 1}{2} + \xi\right)\right)\abs{\frac{d^2}{dt^2}G_z\circ\gamma^{(n-1)}(\tilde{t})} > 0,
   \end{align*}
   with $u = 1 - \xi(1 + \Delta_0 + C_0)$ as above, which certainly holds for all $\xi$ sufficiently small (which can be ensured by taking $\xi_0$ small to begin with).
  \end{proof}

  The conclusion of Lemma \ref{lem:annoying-2} follows by induction on $n$, repeatedly applying Claim \ref{c:annoing-2-c2} starting with $\gamma^{(0)}$.
  \end{proof}

 \subsubsection{Order of tangencies}\label{sec:quadratic}

 In this section we prove that the tangencies whose existence will be proved in Section \ref{sec:existence} are quadratic. In what follows, $U$ and $U_\epsilon$ are as above (see in particular the paragraph preceding equation \eqref{eq:rect-dynamics}).

 Let us fix $\epsilon > 0$, so small that $\overline{U}_\epsilon$ lies entirely inside $U$. Let $F^s$ and $F^u$, respectively, denote open intervals along the positive $y$ and the positive $z$ axes that lie entirely inside $\Phi(U)\setminus\Phi(\overline{U}_\epsilon)$. For $\delta > 0$ small, denote by $B_\delta^s$ and $B_\delta^u$, respectively, the strips $(-\delta, \delta)\times F^s$ and $(-\delta,\delta)\times F^u$ in the $xy$ and $xz$ planes. Assuming that $\delta$ is sufficiently small, the orthogonal projection from the part of $\Phi(\mathbb{S}_{0,\epsilon})$ into the $xz$ plane that projects onto $B_\delta^u$ is a $C^\infty$ diffeomorphism. We may assume that $F^s$ (respectively, $F^u$) contains a fundamental domain for $\mathcal{G}$ (otherwise we can take smaller $\epsilon$), so that it contains intersections with the unstable (respectively, stable) mnifolds on $\mathbb{S}_{0,\epsilon}$. Now, assuming that $\delta > 0$ is sufficiently small, the image of the orthogonal projection into $B_\delta^u$ of the stable manifolds of $\Phi(\Lambda_{0,\epsilon})$ that intersect $F^u$ can be viewed as graphs of functions $\mathfrak{s}: x\rightarrow z$ (since they intersect $F^u$ transversally). Let us parameterize the family of these functions (or, equivalently, their graphs) by the point of intersection of the graph with the $z$ axis, call this parameter $\eta$, and denote the corresponding function by $\mathfrak{s}_\eta$. Clearly there exists $\Delta > 0$ such that for each $\eta$, $\abs{\mathfrak{s}_\eta'} + \abs{\mathfrak{s}_\eta''} < \Delta$ (these functions---or, equivalently, the graphs---depend continuously on $\eta$ in the $C^2$ topology). By $C^2$-continuous dependence of stable manifolds of $\Lambda_{V, \epsilon}$ on the parameter $V$, it follows that the same holds when $\Lambda_{0, \epsilon}$ and $\mathbb{S}_{0,\epsilon}$ above are replaced by $\Lambda_{V,\epsilon}$ and $\mathbb{S}_{V,\epsilon}$, for all $V$ sufficiently close to zero.

 Now take an arc $\upsilon$ of an unstable manifold of $\Lambda_{V, \epsilon}$ such that the orthogonal projection of $\Phi(\upsilon)$ into the $xy$ plane falls entirely inside $B_\delta^s$. Suppose there exists $m\in\N$ and $\eta$ such that $\mathcal{G}^m\circ\Phi(\upsilon)$ is tangent to the stable manifold corresponding to $\eta$, at a point which projects orthogonally into $B^u_\delta$. Then the projection of $\mathcal{G}^m\circ\Phi(\upsilon)$ into $B_\delta^u$ can be viewed as the graph of a function $\mathfrak{u}: x\rightarrow z$, near the point of tangency with $\mathfrak{s}_\eta$. Obviously if $\abs{\mathfrak{u}''}$ at the point of tangency is not smaller than $\Delta$, then the tangency must be quadratic. In what follows, we prove that, assuming $V$ is initially chosen sufficiently close to zero and $\delta > 0$ sufficiently small, we do in fact have $\abs{\mathfrak{u}''} > \Delta$.

 In what follows, we assume that
 \begin{align*}
 \Delta < \frac{1}{8}\hspace{2mm}\text{ and }\hspace{2mm}\abs{\mathfrak{s}'_\eta} + \abs{\mathfrak{s}''_\eta} < \Delta;
\end{align*}
this assumption is justified by Proposition \ref{prop:c2-continuity}.

 Write $\mathcal{G} = (\mathcal{G}_x, \mathcal{G}_y,\mathcal{G}_z)$. Set
 \begin{align}\label{eq:arb-17}
  M = \sup_{\substack{\alpha,\beta\in\set{x,y,z}\\ p\in \Phi(U)}}\norm{\nabla\partial_\beta \mathcal{G}_\alpha(p)}\hspace{2mm}\text{and}\hspace{2mm}
  \xi = \norm{\mathcal{G}|_U - D\mathcal{G}_\mathbf{0}}_{C^1}.
 \end{align}
 Take $C_0 = 1$, $\Delta_0 = \frac{1}{4}$ and take $U$ initially so small, that $\xi < \xi_0$, with $\xi_0 = \xi_0(C_0, \frac{1}{4})$ as in Lemma \ref{lem:annoying-2}. Now to finish the proof, it is enough to prove that there exists $n_0\in\N$, $n_0 < m$, such that $\gamma \eqdef \mathcal{G}^{n_0}\circ\Phi(\upsilon)$ satisfies the hypothesis of Lemma \ref{lem:annoying-2}, with $M$, $\Delta_0$ and $C_0$ as above, and, in addition, $\abs{\gamma_z''(t)} > \Delta$ for all $t$. We can assume, of course, that for every $n\in \set{1,\dots, m}$, $\mathcal{G}^n\circ\Phi(\upsilon)$ lies entirely in $\Phi(U)$ (by taking $\upsilon$ shorter as necessary).

 Take $\upsilon_0$ to be an arc of an unstable manifold of $\Lambda_{0, \epsilon}$ such that $\Phi(\upsilon_0)$ projects into $B_\delta^s$ and intersects $F^s$. Combination of Lemma \ref{lem:quadratic-surfaces} with Proposition \ref{prop:c2-continuity} guarantee that if $\delta > 0$ is taken sufficiently small to begin with, then there exists $n_0\in\N$ such that $\mathcal{G}^{n_0}\circ\Phi(\upsilon_0)$ satisfies the hypothesis of Lemma \ref{lem:annoying} with $M$ as in \eqref{eq:arb-17}, $G = \mathcal{G}$, $\xi_0$ as above, and, in addition,
 \begin{align*}
 &\abs{\frac{d}{dt}\mathcal{G}^{(n_0)}\circ\Phi(\upsilon_0)_y({t})} < 1,\\
 &\abs{\frac{d}{dt}\mathcal{G}^{(n_0)}\circ\Phi(\upsilon_0)_z({t})} < \Delta_0,\\
 \text{and }\hspace{2mm}&\abs{\frac{d^2}{dt^2}\mathcal{G}^{(n_0)}\circ\Phi(\upsilon_0)_z({t})} > \Delta
 \end{align*}
for all ${t}$. Hence there exists $n_0\in\N$ such that the same holds for \emph{every} arc $\upsilon_0$ of \emph{every} unstable manifold of $\Lambda_{0,\epsilon}$, such that the image of $\upsilon_0$ under $\Phi$ intersects $F^s$ and projects into $B_\delta^s$. By continuity, the same holds with $\Lambda_{V, \epsilon}$ in place of $\Lambda_{0,\epsilon}$ for all $V$ sufficiently close to zero.

 Define an \emph{unstable cone field}, $K^{u}$ in the neighborhood $\Phi(U)$ as follows. For $p\in \Phi(U)$, denote by $\mathbf{v} = (v_x, v_y, v_z)$ a vector in $\mathrm{T}_p\R^3$, the tangent space of $\R^3$ at $p$. Given $\kappa > 0$, the unstable cone at $p$ of size $\kappa$ is defined by
 \begin{align}\label{eq:cone-field}
  K_\kappa^{u}(p) \eqdef\set{\mathbf{v}\in \mathrm{T}_p\R^3: \norm{v_x} + \norm{v_y} \leq \kappa\norm{v_z}}.
 \end{align}
From \eqref{eq:differential} we see that $K_\kappa^u$ is invariant under $D\mathcal{G}_\mathbf{0}$ for any $\kappa$. Then, by continuity, assuming $U$ is taken sufficiently small to begin with, depending on $\kappa$, we see that the unstable cone field is invariant under the action by $D\mathcal{G}$ restricted to $U$.

 Take $\kappa_0 \in (0, 1)$ so that the unstable cone field of size $\kappa_0$ is transversal to the arcs of stable manifolds given above as the graphs of the functions $\set{\mathfrak{s}_\eta}_\eta$. Also, by Proposition \ref{prop:c2-continuity}, we may assume that the tangent space of the curve $\mathcal{G}^{n_0}\circ\Phi(\upsilon)$ from above lies inside the center-unstable cone field from \eqref{eq:cs-cone} of size $\kappa_0$. Now if for some $n\in\set{n_0,\dots, m-1}$, $\abs{\frac{d}{dt}\mathcal{G}^n_x\circ\Phi(\upsilon)(t)} = 0$, then $\frac{d}{dt}\mathcal{G}_x\circ\Phi(\upsilon)$ falls into an unstable cone of size $\kappa_0$, which, by the invariance of the unstable cone field, cannot happen if $t$ is such that $\mathcal{G}^m\circ\Phi(\upsilon)(t)$ is close to the point of tangency. Hence if we set $\gamma^{(0)} = \mathcal{G}^{n_0}\circ\Phi(\upsilon)$, then for each $n\in\set{n_0, \dots, m - 1}$, $\mathcal{G}^n\circ\Phi(\upsilon)$ can be reparameterized as $\gamma^{(n - n_0)}$ as in the statement of Lemma \ref{lem:annoying-2}. Indeed, since $\abs{\frac{d}{dt}\gamma_x^{(n - n_0)}(t)} = 1$, if $\abs{\frac{d}{dt}\gamma_y^{(n-n_0)}(t)} \geq 1$, then $\frac{d}{dt}\gamma^{(n - n_0)}(t)$ would fall out of the center-stable cone of size $\kappa_0$, since $\kappa_0 < 1$. Similarly, if $\abs{\frac{d}{dt}\gamma_z^{(n-n_0)}(t)} \geq \Delta_0$, then $\frac{d}{dt}\gamma^{(n-n_0)}(t)$ would lie in $K_{\Delta_0}^u(\gamma^{(n-n_0)}(t))$, which is transversal to the stable manifolds in question (so, by invariance of the cones $K_\kappa^u$, $\mathcal{G}^m\circ\Phi(\upsilon)$ couldn't be tangent to one of the stable manifolds in question). We can now apply Lemma \ref{lem:annoying-2}.

\subsubsection{Generic unfolding of tangencies}\label{sec:unfolding}

We now prove that the tangencies from the previous section unfold generically in the parameter $V$.

Let us consider the center-stable and the center-unstable manifolds that contain the arcs of the stable and the unstable manifolds that were considered in the previous section. We can parameterize these families by the same parameter $\eta$ -- the point of intersection with the interval $F^s$ or $F^u$, respectively. Denote them, respectively, by $W^{cs}_\eta$ and $W^{cu}_\eta$.

For every $N\in\N$, there exists $V_N < 0$ such that if we take $V$ in the previous section not smaller than $V_N$, then the arcs of the unstable manifolds have to be iterated at least $N$ times under $\mathcal{G}$ before a tangency with a stable manifold occurs in $\Phi(U)\setminus\Phi(\overline{U}_\epsilon)$ (as in the previous section). On the other hand, as $N$ tends to infinity, the images of $W^{cu}_\eta$ under $\mathcal{G}^N$ approach the $xz$ plane in the $C^2$ topology, uniformly in $\eta$ -- this is due to Proposition \ref{prop:c2-continuity}.

Thus, fix $V_0 < 0$ so close to zero that if $V\in (V_0, 0)$, then $N$ is sufficiently large so that the manifolds $\mathcal{G}^{N + k}(W^{cu}_\eta)$ intersect transversally with the manifolds  $W^{cs}_\eta$ inside $\Phi(U)$, for all $k\in \N$. Say for some $k_0\in\N$, there is an arc of an unstable manifold, say $\upsilon$, in $\mathcal{G}^{N+k_0}(W^{cu}_{\eta_1})$ that intersects an arc of a stable manifold in $W^{cs}_{\eta_2}$, say $\varsigma$, tangentially (tangency taking place in the ambient manifold $\Phi(\mathbb{S}_V)$ for some $V$). Denote by $\gamma$ the curve of transversal intersection of $W^{cs}_{\eta_2}$ and $\mathcal{G}^{N+k_0}(W^{cu}_{\eta_1})$ (which of course contains the point of tangency $\upsilon\cap\varsigma$). The arc $\upsilon$ is tangent to $\gamma$ in $\mathcal{G}^{N+k_0}(W^{cu}_{\eta_1})$; moreover, in order to show that the tangency $\upsilon\cap \varsigma$ unfolds in the parameter $V$, it is enough to show that the tangency of $\upsilon$ with $\gamma$ unfolds in $V$, with both $\upsilon$ and $\gamma$ viewed as submanifolds of $\mathcal{G}^{N+k_0}(W^{cu}_{\eta_1})$.

Now, near the point of tangency, project the arcs $\set{\mathcal{G}^{N+k_0}(W_{\eta_1}^{cu})\cap \Phi(\mathbb{S}_V)}_V$ orthogonally into the $xz$ plane; when projected orthogonally into the $xz$ plane, they can be viewed as graphs of functions $\mathfrak{u}_V: x\rightarrow z$ and, as we showed in Section \ref{sec:quadratic}, $\abs{\mathfrak{u}_V''} > \Delta$. Projection of $\gamma$ can be viewed as the graph of $\mathfrak{g}: x\rightarrow z$. Then if $\mathfrak{g}$ is sufficiently flat (that is, say, $\abs{\mathfrak{g}''} < \frac{\Delta}{2}$), then the tangency must unfold. Indeed, if it did not, then the tangencies would occur along the graph of $\mathfrak{g}$, and for different values of $V$, we claim that the graphs of $\mathfrak{u}_V$ would intersect, and this would imply intersections of unstable manifolds, which is impossible since unstable manifolds are one-dimensional injectively immersed submanifolds of $\mathbb{S}_V$, and no two unstable manifolds of two points with distinct orbits can intersect. More precisely, we have
\begin{claim}\label{claim:arb3}
 Let $J = [a,b]$ be a compact nondegenerate interval in $\R$. Suppose that $g: J\rightarrow\R$ is $C^2$. Suppose further that $\Delta > 0$ and $\abs{g''} < \Delta/2$ uniformly on $J$. Then if $u, v: J\rightarrow\R$ are $C^2$ functions such that $u'', v'' > \Delta$ uniformly on $J$ and the graphs of $u$ and $v$ are both tangent to the graph of $g$ at the points $(\alpha, u(\alpha))\in J\times \R$ and $(\beta, v(\beta))\in J\times \R$, respectively, then the graphs of $u$ and $v$ intersect over the interval $[\alpha, \beta]$ (assuming, without loss of generality, that $\alpha < \beta$).
\end{claim}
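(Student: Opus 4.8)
The plan is to exploit the strict convexity of the differences $u-g$ and $v-g$. First I would set $h = u - g$ and $k = v - g$ on $J$. Tangency of the graph of $u$ to that of $g$ at $(\alpha, u(\alpha))$ means $h(\alpha) = 0$ and $h'(\alpha) = 0$; likewise $k(\beta) = 0$ and $k'(\beta) = 0$. The second-derivative hypotheses give $h'' = u'' - g'' > \Delta - \Delta/2 = \Delta/2 > 0$ and, identically, $k'' > \Delta/2 > 0$ uniformly on $J$. Thus $h'$ is strictly increasing with a zero at $\alpha$, so $h' < 0$ on $[a,\alpha)$ and $h' > 0$ on $(\alpha, b]$; hence $\alpha$ is the unique global minimizer of $h$ on $J$, and since $h(\alpha) = 0$ we get $h(x) > 0$ for all $x \in J \setminus \{\alpha\}$. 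The same argument applied to $k$ shows $k(x) > 0$ for all $x \in J \setminus \{\beta\}$; in other words the graphs of $u$ and $v$ each lie strictly above the graph of $g$ away from their respective tangency points.

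Next I would compare $u$ and $v$ directly through $\phi \eqdef u - v = h - k$, a continuous function on $[\alpha,\beta] \subseteq J$. Using that $\alpha \neq \beta$ (the hypothesis "$\alpha < \beta$"), the point $\alpha$ lies in $J \setminus \{\beta\}$, so $k(\alpha) > 0$ and therefore $\phi(\alpha) = u(\alpha) - v(\alpha) = g(\alpha) - v(\alpha) = -k(\alpha) < 0$. Symmetrically $\beta \in J \setminus \{\alpha\}$, so $h(\beta) > 0$ and $\phi(\beta) = u(\beta) - v(\beta) = u(\beta) - g(\beta) = h(\beta) > 0$. By the intermediate value theorem there is $c \in (\alpha,\beta)$ with $\phi(c) = 0$, i.e. $u(c) = v(c)$, so the graphs of $u$ and $v$ meet over $[\alpha,\beta]$, which is exactly the assertion.

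\textbf{Main obstacle.} There is no substantive obstacle: the argument is a two-line convexity estimate plus an application of the intermediate value theorem. The only points I would be careful to spell out are that a critical point of a function with everywhere-positive second derivative is its \emph{unique} global minimum on the interval (via the monotonicity of $h'$ above), and that the strict inequalities $\phi(\alpha) < 0 < \phi(\beta)$ genuinely require $\alpha \neq \beta$ — which is guaranteed by the normalization $\alpha < \beta$. (Note that only the lower bounds $u'', v'' > \Delta$ and the upper bound $|g''| < \Delta/2$ are used, not any Lipschitz or higher-order control, so the claim is exactly calibrated to what Sections \ref{sec:quadratic} and Proposition \ref{prop:c2-continuity} provide.)
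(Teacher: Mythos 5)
Your proof is correct and follows essentially the same route as the paper's: the paper likewise sets $U = u - g$ and $V = v - g$, uses $U'', V'' > 0$ together with the tangency conditions $U'(\alpha) = V'(\beta) = 0$ and $U(\alpha) = V(\beta) = 0$ to get monotonicity/positivity on $[\alpha,\beta]$, and concludes the crossing exactly as you do via a sign change of the difference. Your write-up merely makes the intermediate value step and the strictness (requiring $\alpha \neq \beta$) explicit, which is a fine elaboration rather than a different argument.
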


\begin{proof}
 Let us assume without loss of generality that $\alpha < \beta$. Define the functions $U \eqdef u - g$ and $V \eqdef v - g$. From the hypothesis, it follows that $U'', V'' > 0$, while $U'(\alpha) = V'(\beta) = 0$; hence $U$ is monotone increasing on $[\alpha, \beta]$, while $V$ is monotone decreasing on $[\alpha, \beta]$. On the other hand, $U(\alpha) = V(\beta) = 0$. Hence the graphs of $U$ and $V$ must intersect over the interval $[\alpha, \beta]$, and so the graphs of $u$ and $v$ must also intersect over the interval $[\alpha, \beta]$.
\end{proof}

\begin{rem}
 Notice the assumption in the claim above that $u'', v'' > \Delta$. In fact, we may assume $\abs{u''}, \abs{v''} > \Delta$ as long as $u''$ and $v''$ have the same sign.
\end{rem}

Now let us show how we can make sure that $\abs{\mathfrak{g}''} < \Delta/2$. Assuming that $N$ is sufficiently large (which can be ensured by taking $V$ sufficiently close to zero) and, for convenience, even, $\mathcal{G}^{-N/2}(W^{cs}_{\eta_2})$ and $\mathcal{G}^{N/2 + k_0}(W^{cu}_{\eta_1})$ are both close in the $C^2$ topology to the $xy$ and the $xz$ planes, respectively; hence orthogonal projection of their intersection onto the $xz$ plane can be considered as the graph of a function $\tilde{\mathfrak{g}}: x\rightarrow z$ with $\abs{\tilde{\mathfrak{g}}'} + \abs{\tilde{\mathfrak{g}}''} < \frac{\Delta}{2}$. Certainly $\mathcal{G}^{N/2 + k_0}(\upsilon)$ projects onto the graph of some $\tilde{\mathfrak{u}}: x\rightarrow z$ that exhibits a tangency with $\tilde{\mathfrak{g}}$. On the other hand, assuming that $N/2 > n_0$, with $n_0$ as in Section \ref{sec:quadratic}, we see that (as has been proved in the previous section) $\abs{\tilde{\mathfrak{u}}''} > \Delta$, and the same holds for the arcs of the unstable manifolds close to $\upsilon$. Hence the tangency of $\tilde{\mathfrak{u}}$ with $\tilde{\mathfrak{g}}$ unfolds, which of course implies that the tangency of $\upsilon$ with $\varsigma$ unfolds, as desired.

It now remains to prove that the unfolding occurs with nonzero speed in the parameter $V$. To this end, let us consider the following setup. As above, let $\upsilon$ be an unstable manifold lying in $W^{cu}_{\eta_1}$ for some $\eta_1$, and let $\varsigma$ be a stable manifold lying in some $W^{cs}_{\eta_2}$ for some $\eta_2$, such that for some $k\in\N$, $\mathcal{G}^{N+k_0}(\upsilon)$ intersects $\varsigma$ tangentially (on the surface $\Phi(\mathbb{S}_{V_1})$ for some $V_1 \in (V_0, 0)$ with $V_0$ as above). Take $\delta > 0$ small (in particular smaller than $(V_1 - V_0)/2$) and project all the stable manifolds $W^{cs}_{\eta_2}\cap \Phi(\mathbb{S}_V)$ with $V\in (V_1 - \delta, V_1 + \delta)$ orthogonally into the $xz$ plane. Each of these manifolds can be viewed, as we have done above, as the graph of a function $\mathfrak{s}_V: x \rightarrow z$, and these functions depend continuously on $V$ in the $C^2$ topology. Also project orthogonally all the unstable manifolds $\mathcal{G}^{N+k_0}(W^{cu}_{\eta_1})\cap\Phi(\mathbb{S}_V)$ into the $xz$ plane, for $V \in (V_1 - \delta, V_1 + \delta)$. These can be viewed, as above, as graphs of functions $\mathfrak{u}_V: x\rightarrow z$.

\begin{rem}
Notice that the family $\set{\mathfrak{s}_V}$ is different from the family $\set{\mathfrak{s}_\eta}$ that was considered in the previous section. In the previous section, we fixed $V$ and considered a family of stable manifolds on the surface $\Phi(\mathbb{S}_V)$, parameterized by the parameter $\eta$. Here we fix $\eta$ and consider the continuation of the stable manifold $W_\eta^{cs}\cap\Phi(\mathbb{S}_0)$ along the paramter $V$, which is precisely $W_\eta^{cs}\cap\Phi(\mathbb{S}_V)$. Similarily for the unstable manifolds.
\end{rem}

Since $\mathcal{G}^{N+k_0}(\upsilon)$ and $\varsigma$ intersect tangentially, the graphs of $\mathfrak{u}_{V_1}$ and $\mathfrak{s}_{V_1}$ also intersect tangentially in the $xz$ plane; moreover, since the tangency unfolds (as we saw above), for all $V > V_1$, there exist $a_V, b_V$ on the $x$ axis, $a_V < b_V$, such that the graphs of $\mathfrak{u}_V$ and $\mathfrak{s}_V$ intersect transversally at the points $\mathfrak{u}_V(a_V) = \mathfrak{s}_V(a_V)$ and $\mathfrak{u}_V(b_V) = \mathfrak{s}_V(b_V)$. Let $M(V)$ denote the maximum of $(\mathfrak{s}_V - \mathfrak{u}_V)$ on the interval $[a_V, b_V]$. Notice that the function $M: (V_1, V_1 + \delta)\rightarrow \R$ is smooth and can be smoothly extended to the point $V_1$, with $M(V_1) = 0$. We now need to prove that $M'(V_1) \neq 0$, which would imply that the unfolding of the tangency of $\mathcal{G}^{N+k_0}(\upsilon)$ with $\varsigma$ occurs with nonzero speed in the parameter $V$. We have
\begin{claim}\label{c:unfolding-speed}
 There exists $C > 0$ such that for all $V > V_1$ sufficiently close to $V_1$, $M(V) \geq C(V - V_1)$.
\end{claim}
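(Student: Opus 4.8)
The plan is to reduce the claim to a lower bound on the separation $b_V-a_V$ of the two transversal intersection points, and then to read off that separation from the fact --- already established --- that the tangency is \emph{quadratic} and unfolds on one side. First I would work in the orthogonal projection onto the $xz$-plane from Sections~\ref{sec:quadratic}--\ref{sec:unfolding}, in which the arcs appear as graphs of $C^2$ functions $\mathfrak{u}_V,\mathfrak{s}_V$, depending $C^2$-continuously on $V$, with $|\mathfrak{u}_V''|>\Delta$ (indeed large, by the curvature amplification of Lemma~\ref{lem:annoying-2}) and $|\mathfrak{s}_V''|<\Delta/2$. The sign of the unfolding fixed in Section~\ref{sec:unfolding} (the graphs separate into the region $\mathfrak{s}_V>\mathfrak{u}_V$ for $V>V_1$) forces $\mathfrak{u}_V$ to be convex, hence $E_V\eqdef\mathfrak{s}_V-\mathfrak{u}_V$ is concave with $E_V''<-\Delta/2$ on $[a_V,b_V]$. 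A concave $C^2$ function vanishing at the endpoints of an interval of length $L$ and with second derivative below $-\delta$ dominates the parabola $\tfrac\delta2(\,\cdot-a)(b-\,\cdot)$ there, so its maximum is at least $\delta L^2/8$; thus $M(V)\ge\tfrac{\Delta}{16}(b_V-a_V)^2$, and it suffices to prove $b_V-a_V\ge c\sqrt{V-V_1}$ for $V>V_1$ close to $V_1$, for some fixed $c>0$.

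To get this square-root rate I would use the curve $\gamma=W^{cs}_{\eta_2}\cap\mathcal{G}^{N+k_0}(W^{cu}_{\eta_1})$ from Section~\ref{sec:unfolding}. By Lemma~\ref{lem:cu-transversality} both surfaces are transversal to every $\Phi(\mathbb{S}_V)$ near the tangency point $p$, and for $N$ large they are $C^2$-close to the $xy$- and $xz$-planes, hence transversal to one another, so $\gamma$ is a $C^2$ curve through $p$ that is $C^2$-close to the $x$-axis; in particular its $xz$-graph $\mathfrak{g}$ satisfies $|\mathfrak{g}''|<\Delta/2$. The common tangent line $\ell$ of $\upsilon$ and $\varsigma$ at $p$ lies in $T_pW^{cs}_{\eta_2}\cap T_p\mathcal{G}^{N+k_0}(W^{cu}_{\eta_1})=T_p\gamma$ (the last equality by transversality of the two surfaces) and $\ell\subset T_p\Phi(\mathbb{S}_{V_1})$, so $\gamma$ is tangent to the surface $\Phi(\mathbb{S}_{V_1})$ at $p$. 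Let $x\mapsto V(x)$ be the $V$-value of the point of $\gamma$ lying over $x$; this is well defined and $C^2$ because the surfaces move transversally (so $\partial_V\mathfrak{u}_V\ne0$), and the tangency just noted says $V'(x_0)=0$, where $x_0$ is the abscissa of $p$. Differentiating the identity $\mathfrak{u}_{V(x)}(x)=\mathfrak{g}(x)$ twice at $x_0$, using $\mathfrak{u}_{V_1}'(x_0)=\mathfrak{g}'(x_0)$ and $V'(x_0)=0$, gives $\partial_V\mathfrak{u}_{V_1}(x_0)\,V''(x_0)=\mathfrak{g}''(x_0)-\mathfrak{u}_{V_1}''(x_0)$, whose right-hand side has modulus at least $\Delta-\Delta/2>0$.

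Hence $V''(x_0)\neq0$, and since the tangency unfolds on the side $V>V_1$ (so $V(x)\ge V_1$ near $x_0$) we get $V''(x_0)>0$; therefore $V(x)=V_1+\tfrac12V''(x_0)(x-x_0)^2+O(|x-x_0|^3)$. The two solutions of $V(x)=V_1+\mu$ near $x_0$ are exactly $a_V$ and $b_V$, because $\gamma\cap\Phi(\mathbb{S}_V)=\mathcal{M}\cap\mathcal{N}\cap\Phi(\mathbb{S}_V)$ is the set of intersection points of $\upsilon'_V$ with $\varsigma_V$; and from the non-degenerate quadratic behaviour of $V(\cdot)$ these satisfy $b_V-a_V=2\sqrt{2\mu/V''(x_0)}\,(1+O(\sqrt\mu))\ge c\sqrt{V-V_1}$ for $\mu=V-V_1$ small. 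Together with $M(V)\ge\tfrac{\Delta}{16}(b_V-a_V)^2$ this yields $M(V)\ge\tfrac{\Delta c^2}{16}(V-V_1)$, as claimed.

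I expect the real content to be the non-degeneracy $V''(x_0)\neq0$, and this is genuinely just a repackaging of what is already available: the \emph{quadratic} character of the tangency from Section~\ref{sec:quadratic} (entering through the curvature gap $|\mathfrak{u}''|>\Delta>\Delta/2>|\mathfrak{g}''|$) gives $V''(x_0)\neq0$, while the one-sided unfolding of Section~\ref{sec:unfolding} fixes its sign. The remaining difficulties should be bookkeeping rather than conceptual: justifying that $\gamma$ is $C^2$-close to the $x$-axis for $N$ large (via Proposition~\ref{prop:c2-continuity} and the flattening argument in Section~\ref{sec:unfolding}), that $E_V''$ stays bounded away from $0$ on all of $[a_V,b_V]$ (the curvature estimates of Section~\ref{sec:quadratic}), that $a_V,b_V$ are indeed the roots of $V(\cdot)-V_1-\mu$ nearest $x_0$, and that the orthogonal projection near $p$ is bi-Lipschitz with uniform constants, so the $x$-coordinate estimate $b_V-a_V\gtrsim\sqrt{V-V_1}$ is not degraded when passed back to the surfaces.
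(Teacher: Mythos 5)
Your route is genuinely different from the paper's: you reduce the claim to the curvature-gap inequality $M(V)\ge\tfrac{\Delta}{16}(b_V-a_V)^2$ plus a square-root lower bound on $b_V-a_V$, obtained from a quadratic normal form for the function $x\mapsto V(x)$ along the intersection curve $\gamma=W^{cs}_{\eta_2}\cap\mathcal{G}^{N+k_0}(W^{cu}_{\eta_1})$, whereas the paper proves the linear bound directly: it takes a curve $\beta$ through $q=\mathcal{G}^{-(N+k_0)}(p)$ transversal to the surfaces with tangent in $K_1^u$, pushes it forward, and uses the cone expansion $\norm{D\mathcal{G}^m(v)}\ge C_3\sqrt{\lambda^m}\norm{v}$ to show the unstable arcs sweep across the (slowly drifting, speed $\le C_1(V-V_1)$) stable graphs at speed $\ge C_2C_3\sqrt{\lambda^{N+k_0}}(V-V_1)$, which beats $C_1(V-V_1)$ once $V_1$ is close to zero.

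However, your argument has a genuine gap at its central step, the nondegeneracy $V''(x_0)\ne 0$. You derive it from $\partial_V\mathfrak{u}_{V_1}(x_0)\,V''(x_0)=\mathfrak{g}''(x_0)-\mathfrak{u}''_{V_1}(x_0)$ together with $\abs{\mathfrak{g}''}<\Delta/2$, and you justify the latter by saying that $W^{cs}_{\eta_2}$ and $\mathcal{G}^{N+k_0}(W^{cu}_{\eta_1})$ are $C^2$-close to the $xy$- and $xz$-planes. The second closeness is available (Proposition \ref{prop:c2-continuity}), but the first is not: Proposition \ref{prop:c2-continuity} gives closeness of $W^{cs}_\eta$ to the $xy$-plane only as $\eta\to 0$, while the tangency point lies in $\hat{B}^u_{\delta^u}$, at height $\eta_2\in F^u$ bounded away from the origin, where $W^{cs}_{\eta_2}$ is just a fixed surface of bounded but not small curvature; this is exactly why Section \ref{sec:unfolding} does \emph{not} assert flatness of $\mathfrak{g}$ at the final time but instead passes to the half-time images $\mathcal{G}^{-N/2}(W^{cs}_{\eta_2})$ and $\mathcal{G}^{N/2+k_0}(W^{cu}_{\eta_1})$, where both surfaces are flat and the curve $\tilde{\mathfrak{g}}$ satisfies $\abs{\tilde{\mathfrak{g}}'}+\abs{\tilde{\mathfrak{g}}''}<\Delta/2$. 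Without a bound on $\abs{\mathfrak{g}''}$ your identity only rearranges, via $\mathfrak{g}(x)=\mathfrak{s}_{V(x)}(x)$, into $V''(x_0)=\bigl(\mathfrak{s}''_{V_1}(x_0)-\mathfrak{u}''_{V_1}(x_0)\bigr)/\bigl(\partial_V\mathfrak{u}_{V_1}(x_0)-\partial_V\mathfrak{s}_{V_1}(x_0)\bigr)$, and the nonvanishing of the denominator --- the relative transversal speed of the two families of arcs in $V$ --- is precisely the nonzero-speed statement being proved; so as written the step is circular, and the missing input is exactly the paper's expansion estimate. The argument can most likely be repaired by running your normal-form computation at the half-time configuration (where the flatness of $\tilde{\mathfrak{g}}$ and the bound $\abs{\tilde{\mathfrak{u}}''}>\Delta$ are both legitimately available, and $V$ is invariant under $\mathcal{G}$, so the two solutions of $V=V_1+\mu$ transport to $a_V,b_V$ by the fixed diffeomorphism $\mathcal{G}^{N/2}$), but that repair essentially re-imports the paper's flattening argument; the remaining reductions in your proposal (concavity of $\mathfrak{s}_V-\mathfrak{u}_V$, identification of graph crossings with points of $\gamma\cap\Phi(\mathbb{S}_V)$, the sign of $V''(x_0)$ forced by the existence of crossings for $V>V_1$) are sound modulo routine constants.
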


\begin{proof}
 If $p$ denotes the point of tangential intersection of $\upsilon$ and $\varsigma$, let $q$ denote the point $\mathcal{G}^{-(N+k_0)}(p)$. Let $u_V$ denote the arc of the unstable manifold $\mathcal{G}^{N+k_0}(W^{cu}_{\eta_1})\cap \Phi(\mathbb{S}_V)$ that projects orthogonally onto the graph of $\mathfrak{u}_V$ over the interval $[a_V, b_V]$, and let $\tilde{u}_V$ denote $\mathcal{G}^{-(N+k_0)}(u_V)$. Take a smooth, compact curve passing through $q$ and the interior of each $\tilde{u}_V$, $V\in (V_1, V_1 + \delta)$, transversal to the surfaces $\Phi(\mathbb{S}_V)$, and such that its tangent direction at each point falls inside the cone $K_\kappa^u$ from \eqref{eq:cone-field}, with $\kappa = 1$, say; let us call this curve $\beta$. This cone field is invariant under the action by $D\mathcal{G}$ provided that we work inside a sufficiently small neighborhood of the origin (this restriction has already been taken into account in Section \ref{sec:quadratic}). The image of this curve under $\mathcal{G}^{N+k_0}$ gives a smooth, compact curve, say $\alpha$, with the following properties. The curve $\alpha$ passes through the point $p$ and through the interior of each of the arcs $u_V$, and the tangent space of $\alpha$ falls inside of the cone field $K_\kappa^u$. Let us now project $\alpha$ orthogonally into the $xz$ plane to obtain a curve, call it $\mathfrak{a}$, that passes through the point of tangency of $\mathfrak{s}_{V_1}$ with $\mathfrak{u}_{V_1}$ and a point in the interior of the graph of each of the functions $\mathfrak{u}_V$ over the interval $[a_V, b_V]$, $V\in (V_1, V_1 + \delta)$. Now recall from Section \ref{sec:quadratic} that $\abs{\mathfrak{s}'_{V_1}} < 1$. Using this together with the fact that the tangent space of $\mathfrak{a}$ also falls inside the cone field $K_{\kappa}^u$, we get the following.
 \it
 \begin{itemize}\itemsep0.5em
  \item If $l_V$ denotes the line segment parallel to the $z$ axis that passes through the graph of $\mathfrak{s}_{V_1}$ and the point $\mathfrak{a}\cap \mathbf{Graph}(\mathfrak{u}_V|_{[a_V, b_V]})$, and $\abs{l_V}$ denotes the length of $l_V$, then there exists $C_0 > 0$ independent of $V$ such that $\abs{l_V} \geq C_0\abs{\mathfrak{a}_V}$, where $\mathfrak{a}_V$ is the arc along $\mathfrak{a}$ connecting the points $\mathfrak{s}_{V_1}\cap \mathfrak{u}_{V_1}$ and $\mathfrak{a}\cap \mathbf{Graph}(\mathfrak{u}_V|_{[a_V, b_V]})$.
 \end{itemize}
 \rm
 \begin{figure}

 \end{figure}
 On the other hand, since $W^{cs}_{\eta_2}$ is close in the $C^2$ topology to the $xy$ plane, there exists a constant $C_1 > 0$ such that for any $V$, the maximum distance between the graphs of $\mathfrak{s}_V$ and $\mathfrak{s}_{V_1}$ is not larger than $C_1(V - V_1)$. Hence the distance from the point $\mathfrak{a}\cap \mathbf{Graph}(\mathfrak{u}_V|_{[a_V, b_V]})$ is not smaller than $C_0\abs{\mathfrak{a}_V} - C_1(V - V_1)$. Hence, to finish the proof it is enough to show that $\abs{\mathfrak{a}_V} > \frac{C_1}{C_0}(V - V_1)$ uniformly in $V$.

 Let $\beta_V$ denote the arc along $\beta$ with endpoints $q$ and $\tilde{u}_V\cap \beta$. Then there exists $C_2$, independent of $V$ (depending only on the size of the cone $K_\kappa^u$, but we have fixed $\kappa = 1$), such that $\abs{\beta_V} \geq C_2(V - V_1)$. By the expanding property of the vectors inside of the cone field $K_\kappa^u$, we know that there exists $C_3 > 0$, independent of $V$, such that for any $m\in\N$, if for $n = 0, \dots, m$, $\mathcal{G}^n(x) \in \Phi(U)$, then for any vector $v$ in $K_\kappa^u(x)$, $\norm{D\mathcal{G}^m(v)} \geq C_3\sqrt{\lambda^m}\norm{v}$, with $\lambda$ as in \eqref{eq:differential} (assuming that $U$ was initially chosen sufficiently small). Hence we know that $\abs{\mathfrak{a}_V} \geq C_2C_3\sqrt{\lambda^{N+k_0}}(V - V_1)$. Thus it remains only to make sure that $C_2C_3\sqrt{\lambda^{N+k_0}} > \frac{C_1}{C_0}$ independently of $V$. This can be ensured by considering $V_1$ sufficiently close to zero to begin with, since then $N$ will be suitably large.
\end{proof}

\subsubsection{Existence of tangencies}\label{sec:existence}

Now we prove that the tangencies that were postulated in Section \ref{sec:quadratic} indeed occur. That is, we prove that for all $V < 0$ sufficiently close to zero there exist points $q_1$ and $q_2$ in $\Lambda_{V,\epsilon}$ such that the unstable manifold of $q_1$ intersects the stable manifold of $q_2$ tangentially, and the tangency is quadratic and unfolds as in Section \ref{sec:unfolding}. From this statement, together with the fact that there exist periodic points whose global stable (respectively, global unstable) manifolds form a dense sublamination of the stable (respectively, unstable) lamination, it follows that there exists a periodic point $p$ in ${\Lambda}_{V,\epsilon}$, such that the stable and the unstable manifolds of $p$ exhibit a quadratic tangency that unfolds as in Section \ref{sec:unfolding}.

We continue working with $\Lambda_{V, \epsilon}$ and $\Lambda_\epsilon$ from the previous sections. Our arguments in principle follow the technique that was introduced by S. Newhouse in \cite{Newhouse1979}; that is, we compare two Cantor sets of large thickness (greater than one), where one Cantor set does not fall entirely into a gap of the other, and conclude that these Cantor sets must have nonempty intersection. The points of this intersection will be the sought tangencies. These Cantor sets are obtained as transversal intersection of a smooth curve with the stable and the unstable manifolds of $\Lambda_{V, \epsilon}$ on the surface $\mathbb{S}_{V}$. This smooth curve is a curve of tangencies of two foliations, one containing the stable, and the other the unstable, manifolds as a sublamination. Thus, the whole proof consists of the following steps.

\begin{enumerate}[(1)]\itemsep0.5em
\item Construct \emph{stable} and \emph{unstable} $C^1$ foliations with $C^2$ leaves and $C^1$ tangent field, such that stable and unstable manifolds, respectively, form sublaminations of those foliations.

\item Demonstrate that the leaves of the stable foliation contain quadratic tangencies with the leaves of the unstable foliation, and in fact these tangencies form a $C^1$ curve. Moreover, this curve is transversal to both foliations.

\item Show that the intersections of the curve of tangencies constructed in Step (2) with the stable and the unstable manifolds form Cantor sets of large (greater than one) thickness, and one of these Cantor sets does not fall entirely into a gap of the other.

\item Conclude that the two Cantor sets must have nonempty intersection. Therefore, there must exist stable manifolds that intersect tangentially with unstable manifolds (and, combined with the results of Sections \ref{sec:quadratic} and \ref{sec:unfolding}, these tangencies are quadratic and unfold generically in the parameter $V$).

\item Conclude that there exists a periodic point the stable and the unstable manifolds of which intersect in a quadratic tangency; moreover, these tangencies unfold as in Section \ref{sec:unfolding}. These will be the sought homoclinic tangencies.

\end{enumerate}

Before we begin with Step (1), let us note that the notation from the previous sections is carried over to the present section, unless explicitly redefined.

\begin{proof}[Step (1)]

Denote by $\hat{F}^s$ (respectively, $\hat{F}^u$) a fundamental domain for $\mathcal{G}^3$ along the interval $F^u$ (respectively, $F^s$) (recall that $F^s$ is an interval along the positive $y$ axis that lies in $\Phi(U)$ but not in $\Phi(\overline{U}_\epsilon)$, and $F^u$ is an interval along the positive $z$ axis, with the same properties; by taking $\epsilon$ smaller as necessary, we may of course assume that $F^{s,u}$ contains a fundamental domain for $\mathcal{G}^{3}$). As before, by $\eta$ we denote the parameter along $\hat{F}^{s}$ (respectively, $\hat{F}^u$) which indicates the point of intersection with an unstable (respectively, stable) manifold. Let $\eta_r^s$ and $\eta_l^s$ (respectively, $\eta_r^u$ and $\eta_l^u$) denote the two endpoints of $\hat{F}^s$ (respectively, $\hat{F}^u$). Following previously introduced notation, denote by $W^{cs}_\eta$ (respectively, $W^{cu}_\eta$) the center-stable (respectively, center-unstable) manifold that crosses $\hat{F}^u$ (respectively, $\hat{F}^s$) at the point $\eta$.

As has already been mentioned in the previous section, for all $V$ sufficiently close to zero, the orthogonal projections from $\Phi(\mathbb{S}_{V, \epsilon})$ onto $B_\delta^{s}$ and $B_\delta^{u}$ are diffeomorphisms. Let us denote these projections by $\mathbf{P}_V^{s}: B_\delta^s\rightarrow\Phi(\mathbb{S}_{V, \epsilon})$ and $\mathbf{P}_V^{u}: B_\delta^{u}\rightarrow\Phi(\mathbb{S}_{V, \epsilon})$. For a choice of $\delta^s, \delta^u \in (0, \delta)$, let $\hat{B}_{\delta^s, V}^s$ (respectively, $\hat{B}_{\delta^u, V}^u$) denote the subset of $\mathbf{P}_V^s(B_{\delta^s}^s)$ (respectively, $\mathbf{P}_V^u(B_{\delta^u}^u)$) that is bounded between the two curves $W^{cu}_{\eta_l^u}\cap\Phi(\mathbb{S}_{V, \epsilon})$ and $W^{cu}_{\eta_r^u}\cap\Phi(\mathbb{S}_{V, \epsilon})$ (respectively, $W^{cs}_{\eta_l^s}\cap\Phi(\mathbb{S}_{V, \epsilon})$ and $W^{cs}_{\eta_r^s}\cap\Phi(\mathbb{S}_{V, \epsilon})$). Of course, assuming that $\delta$ is initially chosen sufficiently small and $V$ is sufficiently close to zero, we have $\hat{B}_{\delta^s, V}^s \subset U$ and $\hat{B}_{\delta^u, V}^u\subset U$. Then $\hat{B}_{\delta^s, V}^s$ (respectively, $\hat{B}_{\delta^u, V}^u$) is laminated by arcs of one-dimensional unstable (respectively, stable) manifolds that when projected back into $B_\delta^s$ (respectively, $B_\delta^u$) via ${\mathbf{P}_V^s}^{-1}$ (respectively, ${\mathbf{P}_V^u}^{-1}$) intersect $F^u$ (respectively, $F^s$) transversally and stretch completely across $B_\delta^s$ (respectively, $B_\delta^u$). Since $T: \mathbb{S}_V\rightarrow\mathbb{S}_V$, and hence also $\mathcal{G}: \Phi(\mathbb{S}_V)\rightarrow \Phi(\mathbb{S}_V)$, is of smoothness class $C^{> 2}$ (in fact, $C^2$ suffices), this lamination can be extended (locally) to a $C^1$ foliation with $C^2$ leaves that when projected via ${\mathbf{P}_V^{s,u}}^{-1}$ into $B_\delta^{s,u}$ also intersect $\hat{F}^{s,u}$ transversally and stretch completely across $B_\delta^{s,u}$. Moreover, the tangent vector field of this foliation is $C^1$ (for more details, see \cite[Theorem 6.4b]{Hirsch1977}).

Of course these local foliations can be extended to complete $C^1$ foliations of $\hat{B}_{\delta^s, V}^s$ and $\hat{B}_{\delta^u, V}^u$ with $C^2$ leaves and $C^1$ tangent vector field that satisfies the same properties when projected back into $B_\delta^{s,u}$ (i.e. the leaves intersect $F^{s,u}$ transversally and stretch completely across $B_\delta^{s,u}$). Let us call this complete foliation of $\hat{B}_{\delta^s, V}^s$ (respectively, $\hat{B}_{\delta^u, V}^u$), $\mathcal{F}_V^u$ (respectively, $\mathcal{F}_V^s$). The leaves of $\mathcal{F}_V^u$ (respectively, $\mathcal{F}_V^s$) can be parameterized by the parameter $\eta_V$ which is the point of intersection of the projection (via ${\mathbf{P}_V^{s}}^{-1}$, or, respectively, ${\mathbf{P}_V^u}^{-1}$) of the leaf into $B_\delta^s$ (respectively, $B_\delta^u$) with $F^s$ (respectively, $F^u$).  The leaves of $\mathcal{F}_V^{s,u}$ depend continuously in the $C^2$ topology on the parameter $\eta_V$

Denote by $M^s$ the fundamental domain for $\mathcal{G}$ in $\hat{F}^s$ such that $\hat{F}^s = \mathcal{G}^{-1}(M^s)\cup M^s\cup\mathcal{G}(M^s)$. If $\eta_1$ and $\eta_2$ are the two endpoints of $M$, denote by $\hat{M}_{\delta^s, V}^s$ the subset of $\hat{B}_{\delta^s, V}^s$ that is bounded by $W_{\eta_i}^{cu}$, $i = 1, 2$. We define $M^u$ and $\hat{M}_{\delta^u, V}^u$ similarly, with $F^u$, $\hat{F}^u$, $\mathcal{G}^{-1}$, $\hat{B}_{\delta^u, V}^u$ and $W^{cs}$ in place of $F^s$, $\hat{F}^s$ and $\mathcal{G}$, $\hat{B}_{\delta^s, V}^s$ and $W^{cu}$, respectively.
\end{proof}

\begin{proof}[Step (2)]
 We now prove that the leaves of the image of the foliation $\mathcal{F}^u_V$ under $\mathcal{G}^k$ for sufficiently large $k$ (depending on $V$) exhibit quadratic tangencies with the leaves of $\mathcal{F}_V^s$.
 \begin{prop}\label{prop:tangencies}
  For all $\delta^u \in (0, \delta)$ there exists $\delta^s\in (0, \delta)$ and $V_0 < 0$ such that for all $V\in (V_0, 0)$ the following holds. For every leaf $\nu$ of $\mathcal{F}_V^u$ in $\hat{B}_{\delta^s, V}^s$ there exists $k(V, \nu)\in\N$, such that $\mathcal{G}^k(\nu)$ exhibits a quadratic tangency with a leaf of $\mathcal{F}_V^s$ in $\hat{B}_{\delta^u, V}^u$.
 \end{prop}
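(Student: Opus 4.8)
The plan is to iterate $\nu$ forward by $\mathcal{G}$ until a ``fold'' of it lands near the positive $z$-axis inside $\hat{B}^u_{\delta^u,V}$, and then to extract the tangency by an intermediate-value argument on tangent directions, in the spirit of Sections~\ref{sec:quadratic} and~\ref{sec:unfolding}. First I would record the geometry of $\nu$ near the positive $y$-axis. By Lemma~\ref{lem:quadratic-surfaces}, $\Phi(\mathbb{S}_0)$ is quadratically tangent to the $xy$-plane along $F^s$, and, shrinking $V_0$, this persists with uniform $C^2$ bounds for all $V\in(V_0,0)$; since $\Phi(\mathbb{S}_V)$ lies in $\set{y>0,\,z>0}$, a leaf $\nu$ of $\mathcal{F}^u_V$ crossing $F^s$ transversally, written after orthogonal projection to the $xz$-plane as a graph $z=\mathfrak{u}_0(x)$, has a vertex (a point where $\mathfrak{u}_0'$ vanishes) near its intersection with $F^s$ and satisfies $\mathfrak{u}_0''\ge c_0>0$ there, with $c_0$ independent of $\nu$ and of $V\in(V_0,0)$.

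Next I would push $\nu$ forward. Since $D\mathcal{G}_{\mathbf{0}}=\mathrm{diag}(-1,1/\lambda,\lambda)$, the $z$-coordinate of the vertex orbit $v_n\eqdef\mathcal{G}^n(v_0)$ (where $v_0$ is the vertex of $\nu$) grows by a factor comparable to $\lambda$ at each step while its $x$-coordinate stays bounded and its $y$-coordinate shrinks; because $\hat{F}^u$ contains a fundamental domain for $\mathcal{G}^3$ and the per-step $z$-expansion is squeezed between two constants both larger than $1$, the vertex orbit cannot jump over $\hat{F}^u$, so there is a first $k=k(V,\nu)$ with $v_k$ in the part of $\hat{B}^u_{\delta^u,V}$ lying over $\hat{F}^u$. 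Taking $\nu$ short enough that the relevant iterates stay in $\Phi(U)$ (only a small cap of $\mathcal{G}^k(\nu)$ around $v_k$ is needed, and it remains in $\Phi(U)$), Lemmas~\ref{lem:annoying} and~\ref{lem:annoying-2} apply exactly as in Section~\ref{sec:quadratic}: the lower bound on the second derivative of the $z$-component is preserved and, thanks to the $\lambda$-expansion, amplified, so the $xz$-projection $\mathfrak{u}$ of this cap satisfies $\mathfrak{u}''>\Delta$ and, on the portion of the cap lying in $\hat{B}^u_{\delta^u,V}$, its slope $\mathfrak{u}'$ attains a value $<-\Delta$ and a value $>\Delta$.

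Then I would extract the tangency. By Proposition~\ref{prop:c2-continuity} and $C^2$-continuity in $V$, every leaf of $\mathcal{F}^s_V$ inside $\hat{B}^u_{\delta^u,V}$ projects orthogonally to the $xz$-plane onto the graph of a function $\mathfrak{s}_\eta$ with $\abs{\mathfrak{s}_\eta'}+\abs{\mathfrak{s}_\eta''}<\Delta$. Over the $x$-interval on which $\mathcal{G}^k(\nu)$ lies in $\hat{B}^u_{\delta^u,V}$, let $\eta(x)$ be the leaf of $\mathcal{F}^s_V$ through $(x,\mathfrak{u}(x))$ and put $\psi(x)\eqdef\mathfrak{u}'(x)-\mathfrak{s}_{\eta(x)}'(x)$; since $\mathfrak{u}'$ assumes a value below $-\Delta$ and a value above $\Delta$ while $\abs{\mathfrak{s}_{\eta(x)}'(x)}<\Delta$ throughout, $\psi$ changes sign, whence $\psi(x^*)=0$ for some interior $x^*$, i.e.\ $\mathcal{G}^k(\nu)$ is tangent to the leaf $\eta(x^*)$ at the point $(x^*,\mathfrak{u}(x^*))\in\hat{B}^u_{\delta^u,V}$. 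At $x^*$ one has $(\mathfrak{u}-\mathfrak{s}_{\eta(x^*)})(x^*)=(\mathfrak{u}-\mathfrak{s}_{\eta(x^*)})'(x^*)=0$ and $(\mathfrak{u}-\mathfrak{s}_{\eta(x^*)})''(x^*)=\mathfrak{u}''(x^*)-\mathfrak{s}_{\eta(x^*)}''(x^*)>0$, so the contact is of order exactly two; as orthogonal projection onto the $xz$-plane is a diffeomorphism on the relevant piece of $\Phi(\mathbb{S}_{V,\epsilon})$, this is a quadratic tangency of the two curves on $\Phi(\mathbb{S}_V)$ themselves.

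The hard part will be the uniform bookkeeping of the forward iteration in the second step: one must guarantee, uniformly over every leaf $\nu\subset\hat{B}^s_{\delta^s,V}$ and every $V\in(V_0,0)$, that the vertex orbit always falls into $\hat{F}^u$ and that the cap around $v_k$ is genuinely tall and narrow, with $\mathfrak{u}'$ sweeping past both $-\Delta$ and $\Delta$. This is where one pays by shrinking $\epsilon$ (so that $\hat{F}^u$ is multiplicatively long enough that no orbit can skip it), $\delta^s$ (so that $\nu$ hugs $F^s$ closely enough for the quadratic model of the surface to govern it), and $V_0$, thereby reducing all the required uniform estimates, via $C^2$-continuity in $V$, to the single surface $\mathbb{S}_0$, on which Lemma~\ref{lem:quadratic-surfaces} supplies the definite curvature $c_0$. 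A minor additional point is that the leaves of $\mathcal{F}^u_V$ need not themselves be unstable manifolds, but they are $C^1$-close to them and tangent to the unstable cone field of~\eqref{eq:cone-field}, so the estimates of Lemmas~\ref{lem:annoying}--\ref{lem:annoying-2} still apply along them.
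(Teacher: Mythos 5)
Your skeleton is the same as the paper's: use the quadratic tangency of $\Phi(\mathbb{S}_V)$ to the coordinate plane to see that a leaf of $\mathcal{F}_V^u$ is folded, push the fold forward into the region over $F^u$, note that the leaves of $\mathcal{F}_V^s$ there are nearly horizontal (Proposition \ref{prop:c2-continuity}), and extract the tangency by an intermediate-value argument on slopes, with quadraticity coming from $\mathfrak{u}''>\Delta>\abs{\mathfrak{s}''_\eta}$ as in Section \ref{sec:quadratic}. However, there is a genuine gap exactly where the paper's proof does its real work, namely in guaranteeing that the fold, and hence the tangency, lands \emph{inside} $\hat{B}_{\delta^u,V}^u$. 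Saying that the $x$-coordinate of the vertex orbit ``stays bounded'' is not enough: $\hat{B}_{\delta^u,V}^u$ projects onto a strip of width $2\delta^u$, which is small and fixed in advance, while the $x$-direction is neutral for $D\mathcal{G}_{\mathbf{0}}$, so boundedness gives nothing. The missing mechanism is the one isolated in Lemma \ref{lem:tech-1}: since $\Phi(\mathbb{S}_0)$ is quadratically tangent to the $xz$-plane along the $z$-axis (roughly $x^2\approx 2yz+\abs{V}$ near the origin), the exponential contraction of the $y$-coordinate along the forward orbit forces $\abs{x}$ to be small on the whole iterated cap, so that it projects into $(-\delta^u,\delta^u)\times z$; without this conversion of ``small $y$'' into ``small $x$'' your claim that $v_k$ lies over $\hat{F}^u$ \emph{inside} $\hat{B}_{\delta^u,V}^u$ is unsupported.

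A second, related soft spot: the inequality $\mathfrak{u}''>\Delta$ does not by itself yield that $\mathfrak{u}'$ sweeps past both $-\Delta$ and $+\Delta$ on the portion of the cap lying in a strip of width $2\delta^u$; over such a strip a curvature bound of size $\Delta$ only moves the slope by $O(\Delta\delta^u)$. You either need to quantify the amplification of Lemma \ref{lem:annoying-2} so that the curvature exceeds $\Delta/\delta^u$ uniformly in $V\in(V_0,0)$ and check that the slope-zero point of the \emph{image} curve (which is not $\mathcal{G}^k(v_0)$ -- the critical point of the projection is not preserved by $\mathcal{G}$, only approximately so) lies well inside the box, or argue as the paper does: extend $\nu$ to a folded curve whose two oppositely oriented branches connect the two center-stable boundary curves of the fundamental domain containing $\hat{M}_{\delta^u,V}^u$ while confined to the narrow strip (conditions (C1)--(C6)), which forces steep slopes of both signs and hence, by connectedness, a tangency with the $\pi/8$-flat stable leaves. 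Your closing paragraph correctly flags this ``tall and narrow'' bookkeeping as the hard part, but it is precisely the content of the proposition rather than a detail, so as written the proof is incomplete at this point; the rest (the IVT on $\psi=\mathfrak{u}'-\mathfrak{s}'_{\eta(x)}$, the order-two contact, and transferring it back to $\Phi(\mathbb{S}_V)$ via the projection diffeomorphism) is fine and matches the paper.
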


 \begin{proof}
 With $\epsilon$ denoting the radius of the neighborhood $U_\epsilon$ as above, let $V_0 < 0$ so close to zero that for all $V\in(V_0, 0)$, the center-stable and the center-unstable manifolds of $\Lambda_\epsilon$ intersect the surfaces $\mathbb{S}_V$ transversally in the neighborhood $U$ as in Proposition \ref{prop:c2-continuity}. Denote $\Phi(U\cap \bigcup_{V \in (V_0, 0)}\mathbb{S}_V)$ by $\mathbb{J}$. Let us define two partitions of $\mathbb{J}$, $\mathcal{P}^s$ and $\mathcal{P}^u$, consisting of stable and unstable fundamental domains respectively, as follows.


  Fix a center-stable (respectively, a center-unstable) manifold, and call it, say, $W^{cs}$ (respectively, $W^{cu}$) and define, for $n\in\Z$, the sets $J_n^s$ (respectively, $J_n^u$) as the set of those points in $\mathbb{J}$ that lie between (or on) the two manifolds $\mathcal{G}^{n}(W^{cs})\cap \mathbb{J}$ and $\mathcal{G}^{(n - 1)}(W^{cs})\cap \mathbb{J}$ (respectively, $\mathcal{G}^{n}(W^{cu})\cap \mathbb{J}$ and $\mathcal{G}^{(n - 1)}(W^{cu})\cap \mathbb{J}$). Now let $\mathcal{P}^s = \set{J^s_{n}: n\in\Z}$ and $\mathcal{P}^u = \set{J^u_{n}: n\in\Z}$. Of
course, $\mathcal{P}^s$ and $\mathcal{P}^u$ are not partitions in the strict sense, since for any $n\in\Z$, the sets $J^s_n$ and $J^u_n$ intersect the sets $J^s_{n\pm 1}$ and $J^u_{n\pm 1}$, respectively, along the boundaries (formed by two center-stable and, respectively, center-unstable manifolds). Also notice that there exist $N^s, N^u \in\N$ such that for all $n^s > N^s$ and $n^u > N^u$, $J^s_{n^s} = J^u_{-n^u} = \emptyset$ (we assume that $N^s$ and $N^u$ are the smallest such numbers). Essentially, we've sliced $\mathbb{J}$ into stable and unstable fundamental domains.

  \begin{lem}\label{lem:tech-1}
   For any choice of $\delta^u\in (0, \delta)$, there exists $\delta^s\in (0, \delta)$ and $V_0 < 0$ not smaller than the $V_0$ above such that for all $V\in (V_0, 0)$ and every arc $\nu$ in $\mathcal{F}_V^u$ that lies in $\hat{B}_{\delta^s, V}^s\cap J_n^s$, for some $n\in\Z$, there exists a $k = k(V, \nu)\in \N$ such that $\mathcal{G}^k(\nu)$ lies in $\hat{B}_{\delta^u, V}^u$.
  \end{lem}

  \begin{proof}
   Observe that there exists $m\in\Z$ such that $\hat{M}_{\delta^u, V}^u$ lies entirely in $J_m^s$, by construction, and $\hat{B}_{\delta^u, V}^u = \mathcal{G}^{-1}(\hat{M}_{\delta^u, V}^u)\cup \hat{M}_{\delta^u, V}^u\cup\mathcal{G}(\hat{M}_{\delta^u, V}^u)$. If we take $\delta^s$ sufficiently small, we know that there exists $V_0 < 0$ such that for all $V\in (V_0, 0)$, every point of $\Phi(\mathbb{S}_V)$ that lies in $\hat{B}_{\delta^s, V}^s$ has its $z$ coordinate strictly smaller than that of every point lying in $\hat{B}_{\delta^u, V}^u$. It follows that there exists $k\in\N$ such that $J_{n + k}^s = J_m^s$; in particular, $\mathcal{G}^k(\nu)$ lies in $J_m^s$. So in order to prove that $\mathcal{G}^k(\nu)$ lies in $\hat{B}_{\delta^u, V}^u$, it remains to prove that the orthogonal projection of $\mathcal{G}^k(\nu)$ into the $xz$ plane lies in $B_{\delta^u}^u$. We prove this next (in the proof we shall require that $k$ is large enough, which will be guaranteed for all $V$ sufficiently close to zero).

   Recall that the surface $\Phi(\mathbb{S}_0)$ exhibits a quadratic tangency with the $xz$ plane along the $z$ axis; recall also that the surfaces $\Phi(\mathbb{S}_V)$ depend continuously on $V$ in the $C^2$ topology away from a neighborhood of the singularities. It follows that for any given $\delta^u$, there exists $\epsilon_0 > 0$ and $V_0 < 0$ such that for any $V\in (V_0, 0)$, any point that lies on $\Phi(\mathbb{S}_V)$ and whose distance from the $xz$ plane is not larger than $\epsilon_0$ orthogonally projects into the strip $(-\delta^u, \delta^u)\times z$ in the $xz$ plane. It therefore suffices to prove that every point of $\mathcal{G}^k(\nu)$ is not further than $\epsilon_0$ away from the $xz$ plane.

   Observe that there exists $d > 0$ such that for all $V \in (V_0, 0)$ and all points $p$ in $\hat{B}_{\delta^s, V}^s$, the distance from $p$ to the $xz$ plane is not larger than $d$. On the other hand, there exists $N\in\N$ such that any point $p\in \Phi(U)$ whose distance from the $xz$ plane is not larger than $d$ and such that for every $n\in \set{1,\dots, N}$, $\mathcal{G}^n(p)\in \Phi(U)$, the distance from $\mathcal{G}^N(p)$ to the $xz$ plane is smaller than $\epsilon_0$. Thus we need to make sure that $k$ is larger than $N$; this can be done by choosing $\delta^s$ sufficiently small as well as $V_0$ sufficiently close to zero (that is, we need to make sure that the distance from any point $p$ in $\hat{B}_{\delta^s, V}^s$ to the $xy$ plane is sufficiently small).
  \end{proof}

  Let $V_0 < 0$ be not smaller than the $V_0$ from Lemma \ref{lem:tech-1} and, in addition, the following holds. The tangent space of every leaf from $\mathcal{F}_V^s$ when projected into the strip $(-\delta, \delta)\times z$ in the $xz$ plane makes the angle with the horizontal not larger than $\pi/8$ radians (this can be ensured by an application of Proposition \ref{prop:c2-continuity}). Let $d$ denote the minimal distance between the two center-stable manifolds (restricted to the neighborhood $\Phi(U)$) that mark the boundary of $J_m^s$, with $\hat{M}_{\delta^u, V}^u$ contained in $J_m^s$. Let us assume that $\delta$ was initially chosen so small, that for every $\delta^u\in (0, \delta)$, the following holds.

  \begin{enumerate}[(C1)]\itemsep0.5em
   \item \emph{If $\gamma$ is any smooth curve in the strip $(-\delta^u, \delta^u)\times z$ that connects two points at least at distance $d$ from each other, then at some point $p\in \gamma$, the tangent space $T_p\gamma$ makes the angle of at least $\pi/4$ radians with the vertical.}
  \end{enumerate}

  Now with the notation as in Lemma \ref{lem:tech-1}, let $\nu$ be a leaf of $\mathcal{F}_V^u$ that lies entirely in $J_n^s$. Observe that since the center-unstable manifolds can be assumed to be arbitrarily close in the $C^2$ topology to the $xz$ plane (again applying Proposition \ref{prop:c2-continuity}), and since $\Phi(\mathbb{S}_0)$ exhibits a quadratic tangency with the $xy$ plane and away from the singularities the surfaces $\Phi(\mathbb{S}_V)$ depend continuously in the $C^2$ topology on $V$, there exists a smooth curve $\tilde{\nu}$, namely a connected subset of the stable manifold that contains the arc $\nu$, with the following properties.

  \begin{enumerate}[(C1)]\itemsep0.5em
  \setcounter{enumi}{1}
   \item \it $\tilde{\nu}$ is connected, does not intersect itself, and contains $\nu$; \rm

   \item $\tilde{\nu}$ is contained in $J_n^s\cup J_{n+1}^s$

   \item \it $\tilde{\nu}\cap J^s_{n+1}$ consists of two connected smooth disjoint branches, say $\tilde{\nu}_1$ and $\tilde{\nu}_2$, with $\tilde{\nu}_1\cap \tilde{\nu}_2 = \emptyset$, and $\tilde{\nu}_i$, $i = 1, 2$, connect the two center-stable manifolds that form the boundary of $J^s_{n+1}$. \rm

   \item \it There exists a cone field $K^u(p)$, $p\in \Phi(U)$, with $K^u(p)$ transversal to the horizontal plane, invariant under $D\mathcal{G}$, and the curves (rather, their tangent spaces) $\tilde{\nu}_i$, $i = 1, 2$, fall into this cone field. \rm

   \item \it The branches $\tilde{\nu}_i$, $i = 1, 2$, are oppositely oriented. More precisely, if we parameterize $\tilde{\nu}$ on some compact interval, then the unit tangent vectors of $\tilde{\nu}_1$, by transversality with the horizontal, must have either strictly positive or strictly negative $z$ component; then those of $\tilde{\nu}_2$ have, respectively, strictly negative or strictly positive $z$ component. \rm
  \end{enumerate}

  \begin{rem}
   We should remark that (C5) is not completely obvious. Albeit above we have already constructed an invariant cone field, which we called $K_\kappa^u$, transverse to the horizontal, the cone field $K^u(p)$ is different in that it does not have a fixed opening angle $\kappa$, but its angle depends on $p$ and approaches $\pi$ radians as $p$ approaches the $xy$ plane. This is needed since we want (C5) to hold independently of any choice of $\delta^s$ and $V$. Nevertheless, such a cone field can be constructed -- see Proposition 3.12 in \cite{Damanik2010a} for the details.
  \end{rem}

  \noindent Now as a direct consequence of the above, we have the following.

  As in the proof of Lemma \ref{lem:tech-1}, apply $\mathcal{G}$ $k$ times such that $J^s_{n+k - 1} =J^s_m$ (hence $J^s_{n+k} = J^s_{m-1}$, and $\mathcal{G}^k(\nu)$ still falls in $\hat{B}_{\delta^u, V}^s$, even though it is one iteration less than what is stated in Lemma \ref{lem:tech-1}: we can  adjust $k$ by taking $V$ closer to zero as necessary). Now from the invariance of the cones from (C5) we get that the branches $\mathcal{G}^k(\tilde{\nu}_i)$, $i = 1, 2$, are transversal to the horizontal and are oppositely oriented as in (C6). Since these branches connect the center-stable manifolds that form the boundary of $J_m^s$, we know from (C1) that for $i = 1, 2$, $\mathcal{G}^k(\tilde{\nu}_i)$ contains a point $p_i$ such that that $T_{p_i}\tilde{\nu}_i$ makes an angle with the horizontal of at least $\pi/4$ radians. Since the two branches are oppositely oriented, and since $\mathcal{G}^k(\tilde{\nu})$ is connected, there exists a point in $\tilde{\nu}$ which is tangent to a leaf of the stable foliation $\mathcal{F}_V^s$, since the leaves of $\mathcal{F}_V^s$ when projected into $(-\delta, \delta)\times z$ were assumed to make the angle with the horizontal of not larger than $\pi/8$ radians. This completes the proof of Proposition \ref{prop:tangencies}.
 \end{proof}
We can now carry out this construction for every leaf of $\mathcal{F}_V^u$, concluding that every leaf of $\mathcal{G}^k(\mathcal{F}_V^u\cap \hat{M}_{\delta^s, V}^s)$ intersects tangentially with a leaf of $\mathcal{F}_V^s$. In Section \ref{sec:quadratic} we proved that this intersection is quadratic. We now need to prove that these tangencies form a $C^1$ curve that is transversal to the foliations $\mathcal{G}^k(\mathcal{F}_V^u\cap \hat{M}_{\delta^s, V}^s)$ and $\mathcal{F}_V^s$. This is done in \cite[Lemma 9]{Newhouse1979}. Let us record this result for later reference as
\begin{prop}\label{prop:curve-of-tangs}
For every $\delta^u\in (0, \delta)$ there exists $\delta^s\in(0,\delta)$ and $V_0 < 0$ such that for all $V\in (V_0, 0)$ the following holds. There exists $k\in\N$ such that for every leaf $\nu$ of $\mathcal{F}_V^u$ in $\hat{M}_{\delta^s, V}^s$, $\mathcal{G}^k(\nu)$ exhibits a quadratic tangency with a leaf of $\mathcal{F}_V^s$ in $\hat{B}_{\delta^u, V}^u$. These tangencies form a $C^1$ curve transversal to both, the stable and the unstable foliations $\mathcal{F}_V^{s}$ and $\mathcal{G}^k(\mathcal{F}^u_V\cap \hat M_{\delta^s, V}^s)$.
\end{prop}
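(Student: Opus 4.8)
The statement essentially repackages what Section \ref{sec:existence} has produced so far. The tangencies themselves are given by Proposition \ref{prop:tangencies}; that each is quadratic is the content of Section \ref{sec:quadratic}; and, as indicated in the discussion preceding the proposition, the leaf-dependent exponent $k(V,\nu)$ of Lemma \ref{lem:tech-1} can be replaced by a single $k=k(V)$. Indeed $k(V,\nu)$ depends only on the stable fundamental domain in which $\nu$ sits, and over the leaves of $\mathcal{F}_V^u$ in $\hat M_{\delta^s,V}^s$ it takes only finitely many values, which can be absorbed into a single exponent using the slack built into $\hat B_{\delta^u,V}^u$ (which is three $\mathcal{G}$-fundamental domains wide); one then shrinks $\delta^s$ and pushes $V_0$ towards $0$ so that this common $k$ is simultaneously large enough for the distance estimate in the proof of Lemma \ref{lem:tech-1}, for the requirement $N/2>n_0$ of Section \ref{sec:quadratic}, and to make $\mathcal{G}^k(\mathcal{F}_V^u\cap\hat M_{\delta^s,V}^s)$ as $C^2$-close to the $xz$-plane as we like via Proposition \ref{prop:c2-continuity}. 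The genuinely new assertion is that the tangency points form a $C^1$ curve transversal to $\mathcal{F}_V^s$ and to $\mathcal{G}^k(\mathcal{F}_V^u\cap\hat M_{\delta^s,V}^s)$, and for this I would invoke \cite[Lemma 9]{Newhouse1979}.

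Let me recall the mechanism of that lemma in our setting. Work in the orthogonal-projection chart of Section \ref{sec:quadratic}, so that near a tangency the leaves of $\mathcal{F}_V^s$ are graphs $z=\mathfrak{s}(x)$ with $\abs{\mathfrak{s}'}+\abs{\mathfrak{s}''}<\Delta$ and the leaves of $\mathcal{G}^k(\mathcal{F}_V^u\cap\hat M_{\delta^s,V}^s)$ are graphs $z=\mathfrak{u}(x)$ with $\abs{\mathfrak{u}''}>\Delta$. Let $\tau_s,\tau_u$ be the slope fields of the two foliations; these are $C^1$ because Step (1) produced foliations with $C^1$ tangent fields and $\mathcal{G}$ is of class $C^{>2}$. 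The tangency locus is $\set{\Theta=0}$ for the $C^1$ function $\Theta=\tau_u-\tau_s$. Differentiating $\Theta$ along a leaf of $\mathcal{F}_V^s$, using the identity $\frac{d}{dx}\tau_s(x,\mathfrak{s}(x))=\mathfrak{s}''(x)$ valid for any graph-foliation, together with the facts that at a tangency the two leaves share the same slope and that $\tau_u(x,\mathfrak{u}(x))=\mathfrak{u}'(x)$ on the unstable leaf, one finds that this directional derivative equals $\mathfrak{u}''-\mathfrak{s}''$, which is bounded away from $0$ since $\abs{\mathfrak{u}''}>\Delta>\abs{\mathfrak{s}''}$. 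Hence $\nabla\Theta\neq0$ on $\set{\Theta=0}$, so by the implicit function theorem $\set{\Theta=0}$ is a $C^1$ curve, and the nonvanishing of the derivative of $\Theta$ along $\mathcal{F}_V^s$ means this curve is transversal to $\mathcal{F}_V^s$; repeating the computation along a leaf of $\mathcal{G}^k(\mathcal{F}_V^u\cap\hat M_{\delta^s,V}^s)$ yields the same value $\mathfrak{u}''-\mathfrak{s}''$, hence transversality to that foliation too. Since along each unstable leaf the map $x\mapsto\Theta(x,\mathfrak{u}(x))$ has derivative of constant sign (after $k$ iterates $\abs{\mathfrak{u}''}$ is in fact large, by the monotone growth of second derivatives from Lemma \ref{lem:annoying-2}), each leaf carries exactly one tangency and the tangency set is the graph of a $C^1$ function of the unstable-leaf parameter, i.e. a single $C^1$ curve.

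The substantial analytic work behind this proposition has already been discharged in Sections \ref{sec:technical}--\ref{sec:quadratic}: it is Proposition \ref{prop:c2-continuity} and Lemmas \ref{lem:annoying}--\ref{lem:annoying-2} that make the crucial inequality $\abs{\mathfrak{u}''}>\Delta>\abs{\mathfrak{s}''}$ available. What remains to be careful about here is, first, the bookkeeping in the uniform-$k$ step — choosing $\delta^s$, $\delta^u$ and $V_0$ so that one honest integer $k$ serves every leaf while none of the earlier estimates are invalidated — and, second, the observation that since $\tau_s$ and $\tau_u$ are merely $C^1$, the transversality argument must be kept at the level of the first-order implicit function theorem and cannot use any second derivative of the tangent fields; this is exactly why Step (1) took pains to produce foliations with $C^1$ tangent vector fields.
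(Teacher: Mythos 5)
Your proposal is correct and follows essentially the same route as the paper: the paper likewise obtains the proposition by combining Proposition \ref{prop:tangencies} (existence of tangencies, leaf by leaf), the quadraticity established in Section \ref{sec:quadratic}, and the citation of \cite[Lemma 9]{Newhouse1979} for the fact that the tangencies form a $C^1$ curve transversal to both foliations. Your additional remarks --- uniformizing $k(V,\nu)$ to a single $k$ via the fundamental-domain structure and the slack in $\hat B_{\delta^u,V}^u$, and the implicit-function-theorem mechanism with $\Theta=\tau_u-\tau_s$ behind Newhouse's lemma --- are consistent elaborations of details the paper leaves implicit.
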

\end{proof}

\begin{proof}[Step (3)]

 We begin by proving a bounded distortion property for $\mathcal{G}|_{\Phi(U)}$, given in the following lemma.
 \begin{lem}\label{lem:bdp}
 For each $p$ in the neighborhood $\Phi(U)$, let $K_1^u(p)$ be the unstable cone at $p$ as defined in \eqref{eq:cone-field}, with $\kappa = 1$. Define the stable cone at $p$, $K_1^s(p)$, similarly, with $v_y$ and $v_z$ interchanged in \eqref{eq:cone-field}. Then if $U$ is taken sufficiently small, we have the following.

 \begin{enumerate}\itemsep0.5em
  \item If $\mathcal{G}(p)\in U$ (respectively, $\mathcal{G}^{-1}(p)\in U$), then $D\mathcal{G}(K_1^u(p))$ lies in the interior of $K_1^u(\mathcal{G}(p))$ (respectively, $D\mathcal{G}^{-1}(K_1^s(p))$ lies in the interior of $K_1^s(\mathcal{G}^{-1}(p))$; moreover, there exists a constant $\tilde{\lambda}>1$ such that the vectors in $K_1^u(p)$ (respectively, $K_1^s(p)$) expand under the action by $D\mathcal{G}$ (respectively, $D\mathcal{G}^{-1})$ by a constant not smaller than $\tilde{\lambda}$.

  \item There exists $C > 0$ such that the following holds. If $k\in\N$ and for every $n\in\set{1,\dots,k}$, $\mathcal{G}^{n}(p)\in U$ (respectively, $\mathcal{G}^{-n}(p)\in U$), then for any $v\in K_1^u(p)$ (respectively, $v\in K_1^s(p)$) with $\norm{v}=1$,
  \begin{align}\label{eq:bdp}
   \left(\frac{\norm{D\mathcal{G}_p^k(v)}}{\norm{D\mathcal{G}_\mathbf{0}^{k}(v)}}\right)^{\pm 1}\leq C\hspace{2mm}\text{ (respectively, }\hspace{1mm} \left(\frac{\norm{D\mathcal{G}_p^{-k}(v)}}{\norm{D\mathcal{G}_\mathbf{0}^{-k}(v)}}\right)^{\pm 1}\leq C\text{)},
  \end{align}
  where $D\mathcal{G}_{\mathbf{0}}$ was defined in \eqref{eq:differential}.

  \item As a corollary of (2), there exists a constant $D > 0$, independent of $k$ from (2), such that the following holds. If $v\in K_1^u(p)$ and $u\in K_1^s(p)$ with $\norm{u}=\norm{v}=1$, and for all $n\in\set{1, \dots, k}$, $\mathcal{G}^{\pm n}(p)\in U$, then
  \begin{align}\label{eq:bdp-cor}
   \left(\frac{\norm{D\mathcal{G}_p^k(v)}}{\norm{D\mathcal{G}_p^{-k}(u)}}\right)^{\pm 1} \leq D.
  \end{align}
 \end{enumerate}
\end{lem}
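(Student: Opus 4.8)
The plan is to establish the three assertions in order, with (3) falling out of (2) at once, working throughout in the $\Phi$--coordinates of \eqref{eq:rect-dynamics}, in which $D\mathcal{G}_{\mathbf 0}=\mathrm{diag}(-1,\lambda^{-1},\lambda)$ by \eqref{eq:differential} and $\Phi(\rho)$, $\Phi(W^{ss}(P_1))$, $\Phi(W^{uu}(P_1))$ are the $x$--, $y$-- and $z$--axes.

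For part (1) I would argue by direct computation and continuity. Any unit $v=(v_x,v_y,v_z)\in K_1^u$ has $v_x^2+v_y^2\le(\norm{v_x}+\norm{v_y})^2\le v_z^2$, hence $v_z^2\ge\tfrac12$; therefore $\norm{D\mathcal{G}_{\mathbf 0}(v)}^2=v_x^2+\lambda^{-2}v_y^2+\lambda^2 v_z^2\ge\tfrac12\lambda^2$, so $D\mathcal{G}_{\mathbf 0}$ expands $K_1^u$ by a factor $\ge\lambda/\sqrt2>1$; and $w=D\mathcal{G}_{\mathbf 0}(v)$ satisfies $\norm{w_x}+\norm{w_y}=\norm{v_x}+\lambda^{-1}\norm{v_y}\le\norm{v_z}<\lambda\norm{v_z}=\norm{w_z}$, so $D\mathcal{G}_{\mathbf 0}(K_1^u)\subset K_{1/\lambda}^u\subset\Int K_1^u$. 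These are strict inequalities on the compact unit sphere and $p\mapsto D\mathcal{G}_p$ is continuous, so they persist --- with a slightly weaker constant $\tilde\lambda\in(1,\lambda/\sqrt2)$ --- once $U$ is chosen small enough. The stable statement is the same computation for $\mathcal{G}^{-1}$, whose differential at the origin is $\mathrm{diag}(-1,\lambda,\lambda^{-1})$ and for which $K_1^s$ is the expanding cone; it also follows from the time-reversal symmetry \eqref{eq:sym}.

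Part (2) is the classical bounded--distortion estimate, and this is the step I expect to be the main obstacle, since the constant $C$ must not depend on $k$. First I note that it suffices to show $\norm{D\mathcal{G}_p^k(v)}\asymp\lambda^k$ uniformly in $(p,v,k)$: indeed $D\mathcal{G}_{\mathbf 0}^k(v)=((-1)^k v_x,\lambda^{-k}v_y,\lambda^k v_z)$ together with $v_z^2\ge\tfrac12$ gives $\lambda^k/\sqrt2\le\norm{D\mathcal{G}_{\mathbf 0}^k(v)}\le\sqrt2\,\lambda^k$. Setting $p_j=\mathcal{G}^j(p)$ and letting $v_j$ be the normalized image of $v$, one has $\log\norm{D\mathcal{G}_p^k(v)}=\sum_{j=0}^{k-1}\log\norm{D\mathcal{G}_{p_j}(v_j)}$; by part (1) the directions $v_j$ converge geometrically to the invariant strong--unstable direction field along $\Phi(\rho)$ (standard invariant--section machinery, cf.\ \cite[Theorem 6.4b]{Hirsch1977}). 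Since $\mathcal{G}$ is $C^2$, $p\mapsto D\mathcal{G}_p$ is Lipschitz and $\log\norm{D\mathcal{G}_\bullet(\bullet)}$ is bounded below, so comparing each summand first with its value at the point $r_j$ of $\Phi(\rho)$ nearest $p_j$ and then with the strong--unstable expansion rate $\mu^u(r_j)$ of $\mathcal{G}$ along $\Phi(\rho)$ yields
\[
\Bigl|\log\norm{D\mathcal{G}_p^k(v)}-\sum_{j=0}^{k-1}\log\mu^u(r_j)\Bigr|\lesssim\sum_{j=0}^{k-1}\bigl(\dist(p_j,\Phi(\rho))+\lambda^{-j}\bigr).
\]
The decisive geometric point is that an orbit segment trapped in $U$ for $k$ steps must lie $O(\lambda^{-k})$--close to $\Phi(W^s(\rho))$ and recede from it exponentially, so $\dist(p_j,\Phi(\rho))\lesssim\lambda^{-j}+\lambda^{j-k}$ and the right-hand side is bounded uniformly in $k$. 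Finally I would show $\prod_{j=0}^{k-1}\mu^u(r_j)\asymp\lambda^k$: because $\rho$ consists of period--two points of $T$, the map $\mathcal{G}|_{\Phi(\rho)}$ is an involution, so the central multipliers along $\Phi(\rho)$ telescope to a bounded quantity; and because $T$ preserves volume, $\prod_j\abs{\det D\mathcal{G}_{p_j}}=\abs{\det D\mathcal{G}_p^k}$ is, up to the bounded distortion of the fixed diffeomorphism $\Phi$, bounded away from $0$ and $\infty$ --- and these two facts force the product of the unstable rates to be comparable to $\lambda^k$. (Alternatively this step can simply be quoted: it is the bounded--distortion lemma of \cite[Chapter 4]{Palis1993} and \cite{Newhouse1979}, in the partially hyperbolic form used in \cite{Damanik2010a}.) The statement for $\mathcal{G}^{-1}$ and $K_1^s$ follows by the same argument applied to $\mathcal{G}^{-1}$, or again from \eqref{eq:sym}.

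Part (3) is then immediate: by part (2), $\norm{D\mathcal{G}_p^k(v)}\asymp\norm{D\mathcal{G}_{\mathbf 0}^k(v)}\asymp\lambda^k$ for unit $v\in K_1^u(p)$, and applying part (2) to $\mathcal{G}^{-1}$ (whose differential at the origin is $\mathrm{diag}(-1,\lambda,\lambda^{-1})$, expanding $K_1^s$ by $\lambda$ per step) gives $\norm{D\mathcal{G}_p^{-k}(u)}\asymp\norm{D\mathcal{G}_{\mathbf 0}^{-k}(u)}\asymp\lambda^k$ for unit $u\in K_1^s(p)$, using $D\mathcal{G}_{\mathbf 0}^{-k}(u)=((-1)^k u_x,\lambda^k u_y,\lambda^{-k}u_z)$ and $u_y^2\ge\tfrac12$. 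Dividing, $\bigl(\norm{D\mathcal{G}_p^k(v)}/\norm{D\mathcal{G}_p^{-k}(u)}\bigr)^{\pm1}$ is bounded by a constant $D$ depending only on the constant $C$ of part (2), which is \eqref{eq:bdp-cor}. The only genuinely delicate point in the argument is the $k$--independent control in part (2); everything else is bookkeeping around the linear model \eqref{eq:differential}.
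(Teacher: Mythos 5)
Your parts (1) and (3) are fine and essentially coincide with the paper's argument (continuity against the linear model for (1); for (3), reduction to the explicit form of $D\mathcal{G}_\mathbf{0}$ on the two cones). The gap is in part (2), exactly at the step you yourself flag as decisive. You replace the comparison with $D\mathcal{G}_\mathbf{0}$ by a comparison with the strong-unstable rates $\mu^u(r_j)$ at the nearest points $r_j\in\Phi(\rho)$, and then need $\prod_{j<k}\mu^u(r_j)\asymp\lambda^k$ uniformly in $k$. The justification you offer --- $T|_\rho$ is an involution, so the central rates telescope, and volume preservation bounds $\prod_j\abs{\det D\mathcal{G}_{p_j}}$ --- only yields $\prod(\text{unstable rates})\cdot\prod(\text{stable rates})\asymp 1$; it cannot pin either factor to $\lambda^{\pm k}$, because at every point of $\rho$ the stable and unstable multipliers are automatically reciprocal (central eigenvalue $1$, determinant of modulus $1$), so the relation is satisfied no matter which period-two orbit the passage shadows. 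And the unstable multiplier genuinely varies along $\rho$: a direct computation gives $\Tr\,DT^2_{\rho(x)}=8x^2/(2x-1)$, which is non-constant, so period-two points near $P_1$ but distinct from it have multipliers different from the one at $P_1$ (the variation is only quadratic in $x-1$, but over $k$ steps it still produces an error of order $k(x-1)^2$). Consequently an orbit segment that spends its $k$ steps shadowing such a period-two orbit --- for instance a point of $\rho\cap U$ itself, or any nearby point off the surfaces $\mathbb{S}_V$ --- satisfies everything you actually use (it stays in $U$, and $\dist(p_j,\Phi(\rho))\lesssim\lambda^{-j}+\lambda^{j-k}$), yet the ratio in \eqref{eq:bdp} is unbounded in $k$. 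In other words, your argument, which nowhere uses that $p$ lies on one of the invariant surfaces, would prove a statement that is false in that generality.

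The missing ingredient is precisely what the paper isolates in Lemma \ref{lem:orbit}: for $p\in\Phi(U\cap\mathbb{S}_V)$ with $V\in(V_0,0]$, the conic singularity of $\mathbb{S}_0$ at $P_1$ gives $\dist(p_j,\mathbf{0})\le C\,\dist(p_j,\Phi(\rho))$, so the distances to the \emph{origin} (not merely to the $x$-axis) satisfy the two-sided exponential bound and are summable uniformly in $k$. With that in hand the detour through multipliers along $\rho$ is unnecessary: one compares $D\mathcal{G}_{p_j}$ directly with $D\mathcal{G}_\mathbf{0}$, bounding the log-ratio by $\bigl(\sup_j\norm{D\mathcal{G}_{p_j}}\bigr)\sum_j\norm{v_j-u_j}+\sum_j\norm{D\mathcal{G}_{p_j}-D\mathcal{G}_\mathbf{0}}$, where the second sum is $\lesssim\sum_j\dist(p_j,\mathbf{0})$ and the first is controlled by an angle estimate against the unstable eigendirection field $E^u_{p_j}$ of $D\mathcal{G}_{p_j}$ (the paper's Claim \ref{claim:arb2}), whose deviation from $\mathbf{z}$ is again $O(\dist(p_j,\mathbf{0}))$. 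Finally, the fallback of ``simply quoting'' the bounded-distortion lemma of Palis--Takens or Newhouse does not apply: those statements concern uniformly hyperbolic sets, whereas the whole difficulty here is the passage near the normally hyperbolic curve of period-two points, where the center direction and the drift along $\rho$ must be controlled --- which is why the paper proves the estimate by hand and why the surface hypothesis must enter your proof somewhere.
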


\begin{proof}
 Observe that $D\mathcal{G}$ can be forced arbitrarily close to $D\mathcal{G}_\mathbf{0}$ in the $C^1$ topology by taking $U$ sufficiently small, so (1) follows by continuity (in fact, this shows that $\tilde{\lambda}$ can be taken arbitrarily close to $\lambda$).

 Let us prove (2). We shall prove the claim in \eqref{eq:bdp} for $D\mathcal{G}$; the claim for $D\mathcal{G}^{-1}$ can be proved similarly.

 Observe that
 \begin{align*}
  \left(\frac{\norm{D\mathcal{G}_p^k(v)}}{\norm{D\mathcal{G}_\mathbf{0}(v)}}\right)^{\pm 1} = \left(\frac{\norm{D\mathcal{G}_{p_0}(v_0)}\norm{D\mathcal{G}_{p_1}(v_1)}\cdots\norm{D\mathcal{G}_{p_{k-1}}(v_{k-1})}}{\norm{D\mathcal{G}_{\mathbf{0}}(u_0)}\norm{D\mathcal{G}_{\mathbf{0}}(u_1)}\cdots\norm{D\mathcal{G}_{\mathbf{0}}(u_{k-1})}}\right)^{\pm 1},
 \end{align*}
 where
 \begin{align*}
  v_0 = u_0 = v,\hspace{2mm}\text{and for}\hspace{2mm} n \geq 1,\hspace{2mm} v_n = \frac{D\mathcal{G}_{p_{n-1}}(v_{n - 1})}{\norm{D\mathcal{G}_{p_{n-1}}(v_{n - 1})}}\hspace{2mm}\text{ and }u_n = \frac{D\mathcal{G}_{\mathbf{0}}(u_{n-1})}{\norm{D\mathcal{G}_{\mathbf{0}}(u_{n-1})}},
 \end{align*}
 and
 \begin{align*}
  p_0 = p,\hspace{2mm}\text{and for}\hspace{2mm} n\geq 1,\hspace{2mm} p_n = \mathcal{G}(p_{n-1}).
 \end{align*}
 Now we have
 \begin{align*}
  \abs{\log\left(\frac{\norm{D\mathcal{G}_p^k(v)}}{\norm{D\mathcal{G}_\mathbf{0}(v)}}\right)^{\pm 1}} & \leq \sum_{n = 0}^{k-1}\abs{\log \norm{D\mathcal{G}_{p_n}(v_n)} - \log\norm{D\mathcal{G}_\mathbf{0}(u_n)}}\\
  &\leq L\sum_{n = 0}^{k-1}\abs{\norm{D\mathcal{G}_{p_n}(v_n)} - \norm{D\mathcal{G}_\mathbf{0}(u_n)}},
 \end{align*}
 where $L > 0$ is a Lipschitz constant for the function $\log|_{[1, \infty)}$. On the other hand,
 \begin{align*}
  & L\sum_{n = 0}^{k-1}\abs{\norm{D\mathcal{G}_{p_n}(v_n)} - \norm{D\mathcal{G}_\mathbf{0}(u_n)}} \\
  &\leq L\sum_{n = 0}^{k-1}\norm{D\mathcal{G}_{p_n}(v_n) - D\mathcal{G}_\mathbf{0}(u_n)}\\
  &= L\sum_{n = 0}^{k-1}\norm{D\mathcal{G}_{p_n}(v_n) - D\mathcal{G}_\mathbf{0}(u_n) + D\mathcal{G}_{p_n}(u_n) - D\mathcal{G}_{p_n}(u_n)}\\
  &\leq \left(\sup_{0\leq n \leq k - 1}\norm{D\mathcal{G}_{p_n}}\right)L\sum_{n = 0}^{k-1}\norm{v_n - u_n} + L\sum_{n = 0}^{k-1}\norm{D\mathcal{G}_{p_n} - D\mathcal{G}_{\mathbf{0}}}.
 \end{align*}
 Since the supremum above is uniformly bounded (in $k$) because $p_n\in U$ for all $n$ and $\mathcal{G}$ is $C^2$, it is enough to bound uniformly in $k$ the two sums
 \begin{align}\label{eq:lem-step-1-final}
  \sum_{n = 0}^{k-1}\norm{v_n - u_n}\hspace{2mm}\text{ and }\hspace{2mm}\sum_{n = 0}^{k-1}\norm{D\mathcal{G}_{p_n} - D\mathcal{G}_\mathbf{0}}.
 \end{align}

 To keep our arguments better organized and easier to read, before we continue, let us state and prove as a separate claim the following simple result.

 \begin{lem}\label{lem:orbit}
  There exists $\mu\in (0, 1)$, $V_0 < 0$, and $C_0 > 1$, such that for all $k\in\N$ and $V\in(V_0, 0]$, the following holds. Suppose that $p_0\in \Phi(U\cap\mathbb{S}_V)$ and for all $n\in \set{1,\dots, k}$, $\mathcal{G}^n(p_0)\in \Phi(U)$. Let $p_n = \mathcal{G}(p_{n-1})$. Let $\tilde{d}_n$ denote the distance from $p_n$ to the origin. Then there exists $n_0\in\set{1,\dots, k}$ such that for all $n_*\in \set{1,\dots, n_0}$ and $n^*\in \set{n_0 + 1, \dots, k}$ we have
  \begin{align}\label{eq:c-orbit-eq}
   \tilde{d}_{n_*} \leq C_0\mu^{n_*}\tilde{d}_0\hspace{2mm}\text{ and }\hspace{2mm} \tilde{d}_{n^*}\leq C_0\mu^{k - n^*}\tilde{d}_k.
  \end{align}
 \end{lem}

 \begin{proof}
  Denote by $d_{n, s} = d_{n, s}(p)$ and $d_{n, u} = d_{n, u}(p)$ the distance from $p_n$ to the $xy$- and the $xz$-plane, respectively. Let $d_n = \sqrt{d_{n, s}^2 + d_{n, u}^2}$ denote the distance from $p_n$ to the $x$ axis. Let $n_0$ be the maximum in $\set{1, \dots, k}$ such that for all $n_* \in \set{1, \dots, n_0}$, $d_{n_*, u}\geq d_{n_*, s}$. Then for all $n_*\in \set{1, \dots, n_0}$ and $n^*\in\set{n_0 + 1, \dots, k}$ we have the following.
  \begin{align*}
   d_{n_*} \leq \sqrt{2}d_{n_*, u}\hspace{2mm}\text{ and }\hspace{2mm} d_{n^*}\leq \sqrt{2}d_{n^*, s}.
  \end{align*}
  On the other hand, there exists $\mu\in (0, 1)$ and a constant $C_1 > 1$ such that for any $k\in\N$ and $p\in\ \Phi(U)$, if for all $n\in\set{1,\dots,k}$, $\mathcal{G}^n(p)\in \Phi(U)$, then
  \begin{align*}
    d_{n, u}\leq C_1\mu^n d_{0, u}\hspace{2mm}\text{ and }\hspace{2mm}d_{n, s}\leq C_1\mu^{k-n}d_{k, s},
  \end{align*}
  which follows easily from $C^1$ closeness of $D\mathcal{G}$ to $D\mathcal{G}_{\mathbf{0}}$. Thus, for all $n_*$ in $\set{1, \dots, n_0}$ and $n^*\in\set{n_0+1,\dots, k}$, we have
  \begin{align*}
  d_{n_*}\leq \sqrt{2}C_1\mu^{n_*}d_{0,u}\leq \sqrt{2}C_1\mu^{n_*}d_0\hspace{2mm}\text{ and }\hspace{2mm}d_{n^*}\leq \sqrt{2}C_1\mu^{k-n_*}d_{k,s}\leq \sqrt{2}C_1\mu^{k-n_*}d_k.
  \end{align*}
  Since $\Phi(U\cap\mathbb{S}_0)$ has a conic singularity at the origin, there exists $C_2> 0$ such that for all $V\in(V_0, 0]$ with $V_0 < 0$ sufficiently close to zero, and for all $p\in \Phi(U\cap \mathbb{S}_V)$, we have
  \begin{align*}
  \dist(p, x)\leq \dist(p, \mathbf{0})\leq C_2\dist(p,x),
  \end{align*}
  where $\dist(p,x)$ and $\dist(p,\mathbf{0})$ are the distances from $p$ to the $x$ axis and to the origin, respectively.
 \end{proof}

 Let us now continue with the proof of (2) of Lemma \ref{lem:bdp}. Since $\mathcal{G}$ is $C^2$, in the notation of Lemma \ref{lem:orbit} we have
 \begin{align*}
  \sum_{n = 0}^{k-1}\norm{D\mathcal{G}_{p_n} - D\mathcal{G}_\mathbf{0}}\leq C_3\sum_{n = 0}^{k-1} \tilde{d}_n,
 \end{align*}
 with $C_3 > 0$ some constant independent of $n$ and $k$, and the right side is bounded, by Lemma \ref{lem:orbit}, independently of $k$. It remains to bound the first of the two sums from \eqref{eq:lem-step-1-final}.

 Since $v_n$ and $u_n$ are unit vectors, it would suffice to bound
 \begin{align}\label{eq:arb0}
  \sum_{n = 0}^{k-1}\measuredangle(v_n, u_n)\leq \sum_{n = 0}^{k-1}\measuredangle(v_n, \mathbf{z}) + \sum_{n = 0}^{k-1}\measuredangle(\mathbf{z},u_n),
 \end{align}
 where $\mathbf{z}$ denotes the direction parallel to the $z$ axis. It is easy to see just from the definition of $D\mathcal{G}_\mathbf{0}$ that there exists $\tilde{\mu}\in (0, 1)$ such that
 \begin{align}\label{eq:arb1}
  \text{for any}\hspace{2mm}v\in K_1^u\hspace{2mm}\text{and}\hspace{2mm}n\in\N,\hspace{2mm}\measuredangle(D\mathcal{G}^n_\mathbf{0}(v), \mathbf{z})\leq \tilde{\mu}^n\measuredangle(v, \mathbf{z}).
 \end{align}
 Hence the second sum on the right in \eqref{eq:arb0} is bounded uniformly in $k$. Let us now demonstrate that the first term in the sum on the right in \eqref{eq:arb0} is also bounded uniformly in $k$.

 We have
 \begin{align}\label{eq:arb4}
 \sum_{n = 0}^{k-1} \measuredangle(v_n, \mathbf{z}) = \sum_{n_* = 0}^{n_0}\measuredangle(v_{n_*}, \mathbf{z}) + \sum_{n^* = n_0 + 1}^{k-1}\measuredangle(v_{n^*}, \mathbf{z}),
 \end{align}
 with $n_0$ as in Lemma \ref{lem:orbit}. From the definition of $D\mathcal{G}_\mathbf{0}$, we see that $\mathbf{x}$, $\mathbf{y}$ and $\mathbf{z}$, where $\mathbf{x}$, $\mathbf{y}$, and $\mathbf{z}$ are the directions parallel to the $x$, the $y$, and the $z$ axes, respectively, are eigendirections of $D\mathcal{G}_\mathbf{0}$, and vectors in the $z$ direction expand by $\lambda > 1$, while vectors in the $x$ direction neither expand nor contract, and vectors in the $y$ direction contract by $1/\lambda < 1$. By continuity, for any $\delta \in(0, \frac{1}{10\lambda})$, $U$ can be taken sufficiently small such that for all $p\in\Phi(U)$ there exist constants $\mu_p^u > 1  > \mu_p^s > 0$ and $\mu_p^c > 0$, and a constant $C_4 > 0$ independent of $p$, such that the following statements hold.
 \it
 \begin{itemize}\itemsep0.5em
  \item $\mu_p^u \in (\lambda - \delta, \lambda + \delta)$, $\mu_p^s \in (1/\lambda - \delta, 1/\lambda + \delta)$, and $\mu_p^c \in (1 - \delta, 1 + \delta)$ are eigenvalues of $D\mathcal{G}_p$.

  \item Denote the eigenspaces of $D\mathcal{G}_p$ corresponding to the eigenvalues $\mu_p^*$ by $E_p^*$, $*\in\set{s,u,c}$. Then the tangent space of $\R^3$ at the point $p$ is
  \begin{align*}
  T_p\R^3 = E_p^u\oplus E_p^c\oplus E_p^s,
  \end{align*}
  and
  \begin{align*}
  \measuredangle(E_p^s, \mathbf{y}),\hspace{2mm} \measuredangle(E_p^u, \mathbf{z}),\hspace{2mm} \measuredangle(E_p^c, \mathbf{x}) \leq \delta.
  \end{align*}
  Consequently, there exists $\hat{\mu} \in (0, 1)$ such that for every $p\in U$ and $v\in K_1^u(p)$, we have
  \begin{align*}
  \measuredangle(D\mathcal{G}_p(v), E_p^u)\leq \hat{\mu}\measuredangle(v, E_p^u).
  \end{align*}

  \item For any $p, q\in \Phi(U)$, we have
  \begin{align*}
  \abs{\mu_p^* - \mu_q^*} \leq C_4\abs{p - q}
  \end{align*}
   and
  \begin{align*}
  \measuredangle(E_p^u, \mathbf{z}) &\leq C_4\dist(p, \mathbf{0}),\\
  \measuredangle(E_p^s, \mathbf{y}) &\leq C_4\dist(p, \mathbf{0}),\\
  \measuredangle(E_p^c, \mathbf{x}) &\leq C_4\dist(p, \mathbf{0}).
  \end{align*}
  \end{itemize}
 \rm

 Now for $n_*\in \set{0,\dots,n_0}$, we can estimate $\measuredangle(v_{n_*}, \mathbf{z})$ as follows. Observe that
 \begin{align}\label{eq:arb2}
 \measuredangle (v_{n_*}, \mathbf{z}) \leq \measuredangle (v_{n_*}, E_{p_{n_*}}^u) + \measuredangle(E_{p_{n_*}}^u, \mathbf{z}) \leq \measuredangle(v_{n_*}, E_{p_{n_*}}^u) + C_4\tilde{d}_{n_*}.
 \end{align}
 On the other hand, we have
 \begin{claim}\label{claim:arb2}
 For $n_* \in \set{0, \dots, n_0}$,
 \begin{align}\label{eq:aux2}
 \measuredangle(v_{n_*}, E_{p_{n_*}}^u) \leq \hat{\mu}^{n_*}\measuredangle(v_0, E_{p_0}^u) + 2C_4C_0\sum_{j = 1}^{n_*}\hat{\mu}^{n_* - j}\mu^{j}\tilde{d}_0,
 \end{align}
 with $\mu$ and $C_0$ as in the statement of Lemma \ref{lem:orbit}, and $\tilde d_0$ as in the proof thereof.
 \end{claim}

 \begin{rem}
 We adopt the convention that for $i < j$, $\sum_{j}^i\bullet = 0$.
 \end{rem}

 \begin{proof}[Proof of Claim \ref{claim:arb2}]
 The result obviously holds with $n_* = 0$. For the case $n_* > 0$ we proceed by induction. In what follows, $\tilde d_n$, $n = 0, 1, \dots$, is as in the proof of Lemma \ref{lem:orbit}. For $n_* = 1$, we indeed have
 \begin{align*}
 \measuredangle(v_1, E_{p_1}^u) &\leq \measuredangle(v_1, E_{p_0}^u) + \measuredangle(E_{p_0}^u, E_{p_1}^u)\\
 &\leq \hat{\mu}\measuredangle(v_0, E_{p_0}^u) + \measuredangle(E_{p_0}^u, E_{p_1}^u)\\
 &\leq \hat{\mu}\measuredangle(v_0, E_{p_0}^u) + \measuredangle(E_{p_0}^u, \mathbf{z}) + \measuredangle(E_{p_1}^u, \mathbf{z})\\
 &\leq \hat{\mu}\measuredangle(v_0, E_{p_0}^u) + C_4\tilde{d}_0 + C_4\tilde{d}_1\\
 & \leq \hat{\mu}\measuredangle(v_0, E_{p_0}^u) + C_4C_0\tilde{d}_0 + C_4C_0\mu\tilde{d}_0,
 \end{align*}
 where the last inequality follows from \eqref{eq:c-orbit-eq}.

 Assuming now that the claim holds for some $n_*\in\set{1,\dots,n_0 - 1}$, for $n_* + 1$ we have the estimate
 \begin{align*}
 \measuredangle(v_{n_* + 1}, E_{p_{n_* + 1}}^u)
 &\leq \hat{\mu}\measuredangle(v_{n_*}, E_{p_{n_*}}^u) + \measuredangle(E_{p_{n_*}}^u, E_{p_{n_* + 1}}^u) \\
 &\leq \hat{\mu}\left(\hat{\mu}^{n_*}\measuredangle(v_0, E_{p_0}^u) + 2C_4C_0\sum_{j = 1}^{n_*}\hat{\mu}^{n_* - j}\mu^{j}\tilde{d_0}\right) + 2C_4C_0\mu^{n_*+1}\tilde{d}_0,
 \end{align*}
 which gives \eqref{eq:aux2}.
 \end{proof}

 Now combining \eqref{eq:arb2} with \eqref{eq:aux2}, we obtain
 \begin{align*}
 \measuredangle(v_{n_*}, \mathbf{z}) \leq \hat{\mu}^{n_*}\measuredangle(v_0, E_{p_0}^u) + 2C_4C_0\sum_{j=1}^{n_*}\hat{\mu}^{n_* - j}\mu^{j}\tilde{d}_0 + C_4\tilde{d}_{n_*}.
 \end{align*}
 Hence we have
 \begin{align*}
 \sum_{n_* = 0}^{n_0}\measuredangle(v_{n_*}, \mathbf{z})
 \leq \sum_{n_* = 0}^{n_0}\hat{\mu}^{n_*}\measuredangle(v_0,E_{p_0}^u)
 + 2C_4C_0\sum_{n_* = 0}^{n_0}\sum_{j=1}^{n_*}\hat{\mu}^{n_* - j}\mu^{j}\tilde{d}_0 + C_4\sum_{n_* = 0}^{n_0}d_{n_*},
 \end{align*}
 which is bounded (uniformly in $n_0$). Similarly one can show that the second sum on the right of \eqref{eq:arb4} is uniformly bounded in $k$ and $n_0$.

 To prove (3), keeping in mind (2), it suffices of course to prove that there exists a constant $C_5 > 0$ such that for any unit vector $v\in K_1^u$ and any unit vector $u\in K_1^s$, and any $n\in \N$,
 \begin{align*}
  \left(\frac{\norm{D\mathcal{G}^n_\mathbf{0}(v)}}{\norm{D\mathcal{G}^{-n}_\mathbf{0}(u)}}\right)^{\pm 1} \leq C_5;
 \end{align*}
 but this follows immediately from the definition of $D\mathcal{G}_\mathbf{0}$.
\end{proof}

\begin{lem}\label{lem:rect}
For any $\delta > 0$, there exists a sufficiently small neighborhood $U$ of $P_1$ such that the change of coordinates $\Phi: U\rightarrow\R^3$ can be chosen in such a way that in addition to the properties listed in Section \ref{sec:proof4}, $\Phi$ also satisfies the following.

There exists a nonempty open subset $\mathcal{N}$ of $\Phi(U)$, and constants $C >0$ and $V_0<0$, such that for all $V\in(V_0, 0)$, $\Phi(\mathbb{S}_V\cap U)\setminus \mathcal{N}\neq \emptyset$, and we have the following.

\begin{enumerate}\itemsep0.5em

\item The part of the plane $\set{y = z, y>0}\cap \Phi(U)$ lies in $\mathcal{N}$.

\item For all $p\in\mathcal{N}$, $p_z, p_y\neq 0$ and $\left(\frac{p_z}{p_y}\right)^{\pm 1}\leq C$, where, as above, $p_y$ and $p_z$ are the $y$ and the $z$ coordinates of $p$, respectively.

\item For all $V\in(V_0, 0)$ and $p\in\Phi(\mathbb{S}_V\cap U)\setminus\mathcal{N}$, the tangent plane of $\Phi(\mathbb{S}_V)$ at $p$ makes the angle of at most $\delta$ radians with the $xz$ plane (respectively, the $xy$ plane) if $p_z > p_y$ (respectively, if $p_z \leq p_y$).

\end{enumerate}
\end{lem}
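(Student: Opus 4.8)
I would not try to gain new freedom in the change of coordinates: I claim that \emph{any} $\Phi$ enjoying the properties of Section~\ref{sec:proof4}, once $U$ is shrunk enough (by an amount depending on $\delta$), already admits the required $\mathcal{N}$, $C$ and $V_0$. Write $\tilde I\eqdef I\circ\Phi^{-1}$ for the Fricke--Vogt invariant in the rectified coordinates, so that $\Phi(S_V\cap U)=\tilde I^{-1}(V)$ and the tangent plane of $\Phi(\mathbb{S}_V)$ at a point $p$ is $(\nabla\tilde I(p))^{\perp}$. The plan is to take
\begin{align*}
 \mathcal{N}\eqdef\set{(x,y,z)\in\Phi(U)\,:\,y>0,\ z>0,\ \tfrac{1}{C_0}<\tfrac{z}{y}<C_0}
\end{align*}
for a large constant $C_0=C_0(\delta)$ to be fixed, and $C\eqdef C_0$. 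Then $\mathcal{N}$ is open and nonempty, it contains the diagonal ray $\set{y=z,\,y>0}\cap\Phi(U)$ (there $z/y\equiv1$), so (1) holds, and (2) is immediate. Since $\Phi$ maps $\mathbb{S}_V\cap U$ into $\set{y>0,\,z>0}$ for $V<0$ (a property of $\Phi$ from Section~\ref{sec:proof4}), a point $p\in\Phi(\mathbb{S}_V\cap U)\setminus\mathcal{N}$ has $p_y,p_z>0$ with $p_z/p_y\ge C_0$ or $p_z/p_y\le1/C_0$, and by the evident $y\leftrightarrow z$ symmetry of the rectified picture (which reflects the time--reversal symmetry \eqref{eq:sym}) it will suffice to treat the case $p_z/p_y\ge C_0$ and the $xz$ plane.

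The crux is to pin down $D^2\tilde I$ at the origin. One has $\tilde I(\mathbf{0})=0$ and $\nabla\tilde I(\mathbf{0})=0$ (because $\nabla I(P_1)=0$). Since $\Phi$ carries $W^s(\rho)\cap U$ and $W^u(\rho)\cap U$ to the $xy$ and $xz$ planes, on which $\tilde I\ge0$ (because $W^s(\rho)$, respectively $W^u(\rho)$, does not meet $S_V$ for $V<0$ -- as recalled in the proof of Lemma~\ref{lem:quadratic-surfaces}, respectively applying \eqref{eq:sym}), and which vanish identically along the positive $y$ and $z$ axes (these being $\Phi(W^{ss}(P_1))$ and $\Phi(W^{uu}(P_1))$, hence lying in $S_0=\tilde I^{-1}(0)$), the following hold: $\tilde I_{yy}(\mathbf0)=\tilde I_{zz}(\mathbf0)=0$; since $\tilde I|_{\set{z=0}}$ is nonnegative and vanishes along the ray $\set{x=0,\,y\ge0}$, the vector $e_y$ lies in the kernel of the Hessian of $\tilde I|_{\set{z=0}}$ at $\mathbf0$, whence $\tilde I_{xy}(\mathbf0)=0$ and $\tilde I_{xx}(\mathbf0)\ge0$; symmetrically $\tilde I_{xz}(\mathbf0)=0$. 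Nondegeneracy of the conic singularity of $\mathbb{S}_0$ at $P_1$ (its tangent cone, computed from \eqref{e.FV}, is a nondegenerate quadratic cone, a property preserved by the diffeomorphism $\Phi$) forces $\tilde I_{xx}(\mathbf0)=:2b>0$ and $\tilde I_{yz}(\mathbf0)\ne0$, and $\Phi(\mathbb{S}_V\cap U)\subset\set{y>0,z>0}$ for $V<0$ pins the sign $\tilde I_{yz}(\mathbf0)=:-a<0$. Thus near the origin
\begin{align*}
 \tilde I(p)=b\,p_x^2-a\,p_yp_z+O(\abs p^3)\quad\text{and}\quad\nabla\tilde I(p)=\bigl(2b\,p_x,\,-a\,p_z,\,-a\,p_y\bigr)+O(\abs p^2),
\end{align*}
with $a,b>0$. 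Together with the quadratic tangencies recalled in Lemma~\ref{lem:quadratic-surfaces}, this is all the estimate below uses.

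It now suffices to require in addition that $\Phi(U)$ lie in a ball of small radius $\rho=\rho(\delta,C_0)$ about the origin ($C_0$ being chosen first). Fix $p\in\Phi(\mathbb{S}_V\cap U)$, $V\in(V_0,0)$, with $p_y,p_z>0$ and $p_z/p_y\ge C_0$. From $b\,p_x^2-a\,p_yp_z+O(\abs p^3)=V<0$ and $p_yp_z\le p_z^2/C_0$ one gets $\bigl(b-O(\rho)\bigr)p_x^2\le\bigl(a/C_0+O(\rho)\bigr)p_z^2$, so $p_x/p_z$ and $p_y/p_z$ (the latter $\le1/C_0$) can be made as small as we wish by taking $C_0$ large and $\rho$ small; in particular $\abs p\le2p_z$. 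Then the expansion of $\nabla\tilde I$ above, with error $O(p_z^2)$, shows that its $y$ component has size $\ge\tfrac a2p_z$ while its $x$ and $z$ components are at most $\varepsilon(C_0,\rho)\,p_z$ with $\varepsilon\to0$ as $C_0\to\infty$, $\rho\to0$; hence $\nabla\tilde I(p)$ lies within $\delta$ of $\pm e_y$, that is, the tangent plane $(\nabla\tilde I(p))^{\perp}$ of $\Phi(\mathbb{S}_V)$ at $p$ lies within $\delta$ of the $xz$ plane -- and this is uniform in $V\in(V_0,0)$. Combined with the $y\leftrightarrow z$ symmetry this proves (3). Finally, solving $b\,p_x^2=V+a\,p_yp_z+O(\abs p^3)$ with $p_z=C_0p_y$ and $p_y$ of order $\sqrt{\abs V/(aC_0)}$ produces (by the implicit function theorem) points of $\Phi(\mathbb{S}_V\cap U)$ with $p_z/p_y\ge C_0$ as soon as $\abs{V_0}$ is small compared with $\rho^2/C_0$; hence $\Phi(\mathbb{S}_V\cap U)\setminus\mathcal{N}\ne\emptyset$. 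Since $U$ was only shrunk, all the properties of Section~\ref{sec:proof4} are retained, which completes the plan. The one genuinely delicate point is the extraction of this Hessian normal form at the conic singularity; once it is in hand, the rest is routine and, as the argument shows, automatically uniform in the parameter $V$.
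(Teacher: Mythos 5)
Your proposal takes a genuinely different route from the paper, and most of it works. The paper does not try to show that an arbitrary rectifying $\Phi$ will do; it \emph{builds} a specific one: the Morse lemma gives $\Psi$ carrying the surfaces $S_V$ to the hyperboloids $x^2+y^2-z^2=V$, an angular change $\alpha$ straightens $W^{ss},W^{uu}$ onto the lines $\set{(0,t,\pm t)}$, the rotation $\Theta$ by $-\pi/4$ about the $x$ axis puts them on the positive $y$ and $z$ axes, and a final $C^1$-small correction $\Phi_\epsilon$ flattens $W^{cs},W^{cu}$ onto the coordinate planes; the set $\mathcal{N}$ is then defined \emph{dynamically}, as the region bounded by $\widetilde{\mathcal{G}}^{-k}(\Pi)$ and $\widetilde{\mathcal{G}}^{k}(\Pi)$ for a reference surface $\Pi$ obtained by translating $\Theta\circ\alpha\circ\Psi(W^{cs}\cap W^{cu}\cap U)$ along $(0,1,1)$. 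You instead keep any admissible $\Phi$, extract the normal form $\tilde I=b\,x^2-a\,yz+o(\abs{p}^2)$ from the listed properties (semidefiniteness of $\tilde I$ on the two rectified planes, vanishing on the positive $y$ and $z$ axes, nondegeneracy of the Hessian at the critical point, all congruence-invariant), and take for $\mathcal{N}$ the sector $1/C_0<p_z/p_y<C_0$. This makes (1)--(3) and the uniformity in $V$ quite transparent, and is a legitimate alternative, subject to the two caveats below.

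First, the sign $\tilde I_{yz}(\mathbf 0)<0$ is not pinned down by the one-line reason you give. If $\tilde I_{yz}>0$, the level sets $\set{\tilde I=V}\cap\set{y>0,z>0}$ near the origin are \emph{not} empty; they merely lie at distance $\gtrsim\abs{V}^{1/3}$ from $\mathbf 0$, where the cubic remainder can compete with the quadratic form, so the inclusion $\Phi(\mathbb{S}_V\cap U)\subset\set{y>0,z>0}$ alone gives no contradiction. You need an extra input, e.g.\ Hausdorff convergence $\mathbb{S}_V\to\mathbb{S}_0$ as $V\to0^-$, which forces $\Phi(\mathbb{S}_0\cap U)\subset\set{y\ge0,\,z\ge0}$; since the tangent cone of this surface at $\mathbf 0$ is a nappe of $\set{b\,x^2+\tilde I_{yz}\,yz=0}$ containing \emph{both} positive semi-axes, and for $\tilde I_{yz}>0$ no nappe does, the sign follows. (Relatedly, ``part of the $xy$-plane'' should first be upgraded, via invariance of domain, to ``a relative neighborhood of $\mathbf 0$ in the $xy$-plane'' before you may assert $\tilde I\ge0$ on $\set{z=0}$ near $\mathbf 0$; and the $o(\abs p^2)$, $o(\abs p)$ expansions require $\Phi$ at least $C^2$, consistent with the paper's standing assumptions.) Second, be aware that the paper's proof delivers more than the bare statement: Remark \ref{rem:N-fund} and items (2)--(4) of Lemma \ref{lem:passage-step-estimate} appeal to ``the construction of $\mathcal{N}$'' and to ``the $k$ from the proof of Lemma \ref{lem:rect}''. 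With your sector-shaped $\mathcal{N}$ these facts would have to be re-derived; they do hold in substance, because near the origin one iterate of $\mathcal{G}$ multiplies the ratio $p_z/p_y$ by a factor close to $\lambda^2$, so an orbit with $p_y,p_z>0$ cannot jump over the sector and traverses it in a uniformly bounded number of steps, but that is an additional argument your write-up would owe if it replaced the paper's construction.
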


\begin{proof}
Notice that $P_1$ is a nondegenerate singularity of the Fricke-Vogt invariant $I(x,y,z)=x^2+y^2+z^2-2xyz-1$. By the Morse lemma, we can find a diffeomorphism $\Psi$ from a neighborhood of $P_1$ into $\R^3$, mapping the surfaces $S_V$ to the hyperboloids $x^2+y^2-z^2 = V$, and the point $P_1$ to the origin. Denote, as above, the strong-stable (respectively, the strong-unstable) manifold of $P_1$ on $\mathbb{S}_0$ by $W^{ss}$ (respectively, $W^{uu}$). Denote by $(r, \theta, z)$ the cylindrical coordinates on $\R^3$. The stable and the unstable foliations on $\mathbb{S}_0$ are uniformly transversal in a neighborhood of $P_1$ (see Lemma 3.1 in \cite{Damanik2009}); consequently, the tangent directions of $\pi_{xy}\circ\Psi(W^{ss})$ and $\pi_{xy}\circ\Psi(W^{uu})$, where $\pi_{xy}$ denotes the orthogonal projection into the $xy$ plane, are uniformly transversal on the $xy$ plane in a neighborhood of the origin. Thus there exists a smooth map $a:[0, 2\pi)\times\R\rightarrow [0, 2\pi)\times\R$, such that $\alpha:(r, \theta, z)\mapsto(r, a(\theta, z), z)$ is a diffeomorphism on $\R^3$ and $\alpha\circ \Psi(W^{ss})$ (respectively, $\alpha\circ \Psi(W^{uu})$) is part of the line $\set{(0, t, t): t\in\R}$ (respectively, $\set{(0, t, -t): t\in\R}$); clearly $\alpha$ preserves the hyperboloids $x^2 + y^2 - z^2 = V$. Finally, let $\Theta: \R^3\rightarrow \R^3$ denote the rotation by $-\frac{\pi}{4}$ radians about the $x$ axis. Then $\Theta\circ\alpha\circ\Psi$ maps the surfaces $S_V$ to the hyperboloids $x^2+y^2-z^2=V$ rotated about the $x$ axis by $-\frac{\pi}{4}$ radians, such that $P_1$ is mapped to the origin, $W^{ss}$ is mapped onto part of the positive $y$ axis, and $W^{uu}$ is mapped onto part of the positive $z$ axis.

Denote the center-stable (respectively, the center-unstable) manifold that contains $W^{ss}$ (respectively, $W^{uu}$) by $W^{cs}$ (respectively, $W^{cu}$). Let $\Pi$ denote the smooth two-dimensional submanifold of $\R^3$ that is foliated by the curves $\set{\xi_u}$ with $\xi_0 = \Theta\circ\alpha\circ\Psi(W^{cs}\cap W^{cu}\cap U)$, and for $u\neq 0$, $\xi_u$ is the translation of $\xi_0$ by $u$ units in the direction of the vector $(0, 1, 1)$. Let $\widetilde{\mathcal{G}}$ denote the push-forward of the Fibonacci trace map $T$ by the diffeomorphism $\Theta\circ\alpha\circ\Psi$ in the neighborhood $U$. Let $\widetilde{\mathcal{N}}_k$ denote the region of $\Theta\circ\alpha\circ\Psi(U)\setminus\xi_0$ bounded by the surfaces $\widetilde{\mathcal{G}}^{-k}(\Pi)$ and $\widetilde{\mathcal{G}}^k(\Pi)$, $k\in\N$. If $U$ is sufficiently small, (1)-(3) hold with $\widetilde{\mathcal{N}}_k$ in place of $\mathcal{N}$ and $\Theta\circ\alpha\circ\Psi$ in place of $\Phi$, for $k\in\N$ sufficiently large, a suitable choice of $C_0 > 0$ in place of $C$ (depending on $k$), and $V_0 < 0$ sufficiently close to zero.

Observe that $\Theta\circ\alpha\circ\Psi(W^{cs}\cap U)$ (respectively, $\Theta\circ\alpha\circ\Psi(W^{cs}\cap U)$) is tangent to the cone $\Theta\circ\alpha\circ\Psi(\mathbb{S}_0\cap U)$ along the positive $y$ (respectively, the positive $z$) axis. Thus for every $\epsilon > 0$ and sufficiently small neighborhood $U$ (depending on $\epsilon$), we can construct a diffeomorphism $\Phi_\epsilon: \Theta\circ\alpha\circ\Psi(U)\rightarrow\R^3$ mapping $\Theta\circ\alpha\circ\Psi(W^{cs}\cap U)$ (respectively, $\Theta\circ\alpha\circ\Psi(W^{cu}\cap U)$) into the $xy$ (respectively, the $xz$) plane, with $\norm{\Phi_\epsilon - \mathrm{Id}}_{C^1} < \epsilon$ (here $\mathrm{Id}$ denotes the identity map, and $\norm{\cdot}_{C^1}$ is the $C^1$ norm). Then, for sufficiently small $\epsilon > 0$, (1)-(3) hold with $\Phi=\Phi_\epsilon\circ\Theta\circ\alpha\circ\Psi$ and $\mathcal{N}$ the interior of $\Phi_\epsilon(\widetilde{\mathcal{N}}_k)$, with suitably chosen $C\geq C_0$ and $V_0 < 0$.
\end{proof}

In what follows, $\mathcal{N}$, $U$, and $\Phi$ are as in Lemma \ref{lem:rect}. We continue to denote by $\mathcal{G}$ the pushforward of the Fibonacci trace map $T$ by $\Phi$ in the neighborhood $U$ (and, as above, $\mathcal{G}_V$ denotes the pushforward of $T_V$). Implicit in this notation is the dependence on the choice of $\delta$ in the statement of Lemma \ref{lem:rect}. In what follows, we choose $\delta = \frac{\pi}{6}$ radians.

\begin{rem}\label{rem:N-fund}
Notice that by construction, $\mathcal{N}$ is a fundamental domain. More precisely, if $p\in\Phi(U)\setminus\mathcal{N}$ with $p_y > p_z$, and for some $k\in\N$, $\mathcal{G}^k(p)\in\Phi(U)\setminus\mathcal{N}$ with $\mathcal{G}^k(p)_z > \mathcal{G}^k(p)_y$, then for some $n\in\set{1, \dots, k}$, $\mathcal{G}^n(p)\in\mathcal{N}$.
\end{rem}

\begin{lem}\label{lem:passage-step-estimate}
	There exists an open neighborhood $O$ of $P_1$ with $\overline{O}\subset U$, such that the following holds.

  Let $l_s$ and $l_u$ be points on the positive $y$ and the positive $z$ axis, respectively, in $\Phi(O)$. For all $N\in\N$ there exists $V_0 < 0$ (which depends on $N$) and $m\in\N$ (which does not depend on $N$) such that for all $V\in (V_0, 0)$ and $p\in\Phi(O\cap \mathbb{S}_V)$ with $p_z < l_u$, $p_y > l_s$, and $p_z < p_y$, there exists $\hat{n} = \hat{n}(p)\in\N$ such that
	\begin{enumerate}\itemsep0.5em

  \item $\mathcal{G}^{\hat{n}}(p)_z > l_u$ and for all $n\in\set{1,\dots, \hat{n}}$, $\mathcal{G}^{n}(p)\in\Phi(U)$.

	\item There exist $n_*, n^*\in\set{1,\dots,\hat{n}}$, $1 < n_* < n^* < \hat{n}$, such that for all $n\in\set{n_*, \dots, n^*}$, $\mathcal{G}^{n}(p)\in\mathcal{N}$, and for all $n\in\set{n^*+1,\dots, \hat{n}}$ and $n\in\set{1, \dots, n_*-1}$, $\mathcal{G}^{n}(p)\notin \mathcal{N}$.

  \item For all $n\in\set{n^*+1,\dots, \hat{n}}$ (respectively, $n\in\set{1,\dots,n_*-1}$), $\mathcal{G}^{n}(p)_z > \mathcal{G}^{n^*}(p)_y$ (respectively, $\mathcal{G}^{n}(p)_y > \mathcal{G}^{n}(p)_z$).

  \item With $k$ as in the proof of Lemma \ref{lem:rect} (with $\delta = \frac{\pi}{6}$ as above), we have $2k - 1\leq n^* - n_* \leq 2k$.

	\item $\hat{n} - n^* \leq m + n_*$ (and, similarly, $n_* \leq \hat{n} - n^* + m$).

  \item $\hat{n} - n^* > N$.
	\end{enumerate}
\end{lem}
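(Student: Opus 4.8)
The plan is to track the forward orbit $p_n := \mathcal{G}^n(p)$, with coordinates $p_n = ((p_n)_x,(p_n)_y,(p_n)_z)$, in the rectifying chart of Lemma~\ref{lem:rect} (with $\delta = \pi/6$), using three facts already in place. (i) $\mathcal{G}$ may be taken arbitrarily $C^1$-close on $\Phi(U)$ to $D\mathcal{G}_{\mathbf{0}} = \mathrm{diag}(-1,1/\lambda,\lambda)$, with bounded distortion (Lemma~\ref{lem:bdp}), so the $z$-coordinate expands by a factor close to $\lambda$ and the $y$-coordinate contracts by a factor close to $1/\lambda$ at each step. (ii) By the Morse reduction in the proof of Lemma~\ref{lem:rect}, $\Phi(\mathbb{S}_V)$ is to leading order the quadric $\{2yz = x^2 + |V|\}$; along it $|(p_n)_x| \le \sqrt2\max\{(p_n)_y,(p_n)_z\}$, the product $(p_n)_y(p_n)_z$ stays comparable to $x_0^2+|V|$ uniformly in $n$ (by Lemmas~\ref{lem:orbit} and~\ref{lem:bdp}), and hence $(p_n)_z$ grows geometrically and $\dist(p_n,\mathbf{0}) \ge \sqrt{2(p_n)_y(p_n)_z} \ge \sqrt{|V|}$. (iii) $\mathcal{N}$ is to leading order the double cone $\{\lambda^{-2k}\le z/y\le\lambda^{2k}\}$ about the half-plane $\{y=z>0\}$, and by Remark~\ref{rem:N-fund} is a fundamental domain for the transit from the ``stable side'' $\{y>z\}$ to the ``unstable side'' $\{z>y\}$; here $k$ is the fixed integer of Lemma~\ref{lem:rect}. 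I would fix, once and for all, a small box neighborhood $O$ of $P_1$ with $\overline O\subset U$, small enough that an orbit whose coordinates obey the bounds in (ii) cannot leave $\Phi(U)$ before its $z$-coordinate exceeds $\ell_u$.

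To prove (1): if the orbit stayed in $\Phi(U)$ forever with $(p_n)_z\le\ell_u$, then applying Lemma~\ref{lem:orbit} to longer and longer initial segments gives a contradiction --- either $|(p_n)_y|\ge|(p_n)_z|$ for all $n$, forcing $\dist(p_n,\mathbf{0})\le C_0\mu^n\dist(p_0,\mathbf{0})\to 0$ against $\dist(p_n,\mathbf{0})\ge\sqrt{|V|}$; or there is a last $n_0$ with $|(p_n)_y|\ge|(p_n)_z|$, and the backward estimate of Lemma~\ref{lem:orbit} forces $|(p_{n_0+1})_z|\le C_0\mu^{K-n_0-1}\ell_u\to 0$ as the segment length $K\to\infty$, against $|(p_{n_0+1})_z|\gtrsim\sqrt{|V|}$. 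Hence $\hat n := \min\{n\ge 1:(p_n)_z>\ell_u\}$ is finite, and by the choice of $O$ the orbit stays in $\Phi(U)$ for $n\le\hat n$. For (2)--(4): since $(p_n)_z/(p_n)_y$ increases by a factor close to $\lambda^2$ per step, it passes from a value $<1$ (as $p$ is on the stable side) through the window $[\lambda^{-2k},\lambda^{2k}]$, and --- $\mathcal{N}$ being a fundamental domain, which rules out re-entries --- the set of $n$ with $p_n\in\mathcal{N}$ is exactly the corresponding block of iterates, of length $2k$ up to a $\pm1$ discretization error; calling it $[n_*,n^*]$ gives (2) and (4), while (3) follows because for $n<n_*$ the orbit lies in the $\{y>z\}$-component of $\Phi(U)\setminus\mathcal{N}$, and for $n>n^*$ in the $\{z>y\}$-component with $(p_n)_z$ already exceeding $(p_{n^*})_y$ on exit.

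Parts (5) and (6) are the quantitative core. With $z_n := (p_n)_z$, bounded distortion gives $z_n\asymp\lambda^n z_0$ uniformly, and from $z_ny_n\asymp x_0^2+|V|$ together with the defining ratios of $\partial\mathcal{N}$ one gets $z_{n_*}\asymp\lambda^{-k}\sqrt{x_0^2+|V|}$ and $z_{n^*}\asymp\lambda^{k}\sqrt{x_0^2+|V|}$, so
\begin{align*}
n_* &= \log_\lambda(z_{n_*}/z_0) + O(1), \\
\hat n - n^* &= \log_\lambda(\ell_u/z_{n^*}) + O(1) = \log_\lambda\bigl(\ell_u/\sqrt{x_0^2+|V|}\bigr) - k + O(1).
\end{align*}
Hence $(\hat n-n^*)-n_* = \log_\lambda(\ell_u/y_0) + O(1)$, which is bounded in absolute value by a constant $m$ depending only on $\ell_s$, $\ell_u$ and the size of $O$ --- in particular not on $N$; this is (5). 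For (6) one needs $\hat n-n^* = \log_\lambda\bigl(\ell_u/\sqrt{x_0^2+|V|}\bigr) - k + O(1) > N$, i.e.\ $x_0^2+|V|$ below an $N$-dependent threshold; in the regime in which the lemma is applied $p$ lies close to the strong-stable manifold $W^{ss}(P_1)$ (the positive $y$-axis), so $x_0$ is small, and then $x_0^2+|V|\to 0$ as $V_0\to 0$, which secures (6) once $V_0$ is close enough to $0$.

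The main obstacle is making the ``$\asymp$'' estimates honest as the orbit crosses the throat $\{|p_n|\sim\sqrt{|V|}\}$: there $D\mathcal{G}_{p_n}$ differs from $D\mathcal{G}_{\mathbf{0}}$ by an amount comparable to $|p_n|$ itself, so a naive multiplicative bookkeeping of the expansion rate loses a factor growing with the number of throat iterates. The resolution is precisely the bounded-distortion package of Lemma~\ref{lem:bdp}, together with the geometric approach/departure rates of Lemma~\ref{lem:orbit} (compare Proposition~3.12 of \cite{Damanik2010a}): these upgrade the heuristic count to two-sided inequalities with universal constants --- at the cost only of enlarging $m$ --- and they are what pins the $\mathcal{N}$-transit time to $2k$ rather than merely $O(k)$.
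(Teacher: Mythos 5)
Your plan is essentially sound and leans on the same two ingredients the paper uses (Lemmas \ref{lem:bdp} and \ref{lem:orbit}), but for the quantitative heart, item (5), your route is genuinely different. You do coordinate bookkeeping along the orbit: $z_n\asymp\lambda^n z_0$, $y_n\asymp\lambda^{-n}y_0$, the surface relation $2y_nz_n\approx x_n^2+\abs{V}$, entry/exit heights $\lambda^{\mp k}\sqrt{x^2+\abs{V}}$ at $\partial\mathcal{N}$, and then subtract logarithms. The paper never controls the coordinates of the orbit at all: it takes a point $p$ of $\mathcal{N}$, joins it to the two invariant coordinate planes by the straight segments $\beta_s,\beta_u$ (whose lengths are comparable because $(p_z/p_y)^{\pm1}\leq C$ by Lemma \ref{lem:rect}(2)), transports them by $\mathcal{G}^{-n_p^{(s)}}$ and $\mathcal{G}^{n_p^{(u)}}$, converts curve length into distance-to-plane via the cone estimate of Claim \ref{claim:geom-obvious}, and then plays forward stretch against backward stretch using Lemma \ref{lem:bdp}(1),(3) to get $\tilde\lambda^{\,\abs{n_p^{(s)}-n_p^{(u)}}}\leq\mathrm{const}$. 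That device buys exactly what your ``make the $\asymp$ honest'' step still owes: Lemma \ref{lem:bdp} as stated bounds distortion for \emph{tangent vectors} in the cones, so ``$z_n\asymp\lambda^n z_0$ by bounded distortion'' is not a quotation of it. Your version can be repaired without the paper's trick --- the $xy$-plane is invariant, so $\mathcal{G}_z(p)=z\,g(p)$ with $g(\mathbf{0})=\lambda$, hence $z_{n+1}=z_n\bigl(\lambda+O(\dist(p_n,\mathbf{0}))\bigr)$ and the multiplicative errors sum by Lemma \ref{lem:orbit}; the product relation $y_nz_n\asymp x_0^2+\abs{V}$ then follows from the two coordinate estimates plus the surface equation, with no separate control of the $x$-drift --- but this conversion is the one real IOU in your (5). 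Your (1)--(4) are the same in spirit as the paper's one-line justifications; for (4) note that the clean reason the transit time is pinned to $\set{2k-1,2k}$ (rather than $2k+O(1)$) is that $\partial\mathcal{N}$ consists of the dynamically defined surfaces $\mathcal{G}^{\pm k}(\Pi)$ from the proof of Lemma \ref{lem:rect}, not the cone approximation $\lambda^{-2k}\leq z/y\leq\lambda^{2k}$.

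For (6) you import a hypothesis that is not in the statement: you assume $p$ lies near $W^{ss}(P_1)$, i.e. $x_0$ small, so that $x_0^2+\abs{V}\to0$ as $V_0\to0$. The lemma quantifies over all $p\in\Phi(O\cap\mathbb{S}_V)$ with $p_y>l_s$, $p_z<l_u$, $p_z<p_y$, and such $p$ may have $x_0$ of the order of the size of $O$ (or of $\sqrt{l_u}$), in which case your own formula $\hat n-n^*=\log_\lambda\bigl(l_u/\sqrt{x_0^2+\abs{V}}\bigr)-k+O(1)$ shows $\hat n-n^*$ stays bounded; so, strictly, you prove a weaker statement. To be fair, the paper's one-line argument for (6) --- that $\mathcal{N}\cap\Phi(\mathbb{S}_V)$ Hausdorff-converges to the origin --- has the same blind spot, since points of $\mathcal{N}\cap\Phi(\mathbb{S}_V)$ with macroscopic $x$ do not approach the origin; and in the only place the lemma is invoked (points of $\mathcal{G}^{-k}(\gamma_{V,\epsilon})\subset\hat{M}_{\delta^s,V}^s$, where $\abs{x}<\delta^s$) your extra smallness assumption is satisfied. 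So flag the restriction explicitly, but it is a defect you share with, rather than introduce into, the paper's own treatment.
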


\begin{proof}
  With $O$ initially chosen sufficiently small, (1) follows from \eqref{eq:differential} and the $C^1$ closeness of $\mathcal{G}$ to $D\mathcal{G}_\mathbf{0}$. By Remark \ref{rem:N-fund} and the construction of $\mathcal{N}$ (see the proof of Lemma \ref{lem:rect}), (2), (3) and (4) follow.

  Let us record, for ease of reference, the following simple fact.
	\begin{claim}\label{claim:geom-obvious}
	There exists $C_0 > 0$ such that the following holds. Suppose $\gamma$ is a compact smooth regular curve in $\Phi(U)$ with the endpoints $p$ and $q$. Suppose $p$ lies in the $xz$ plane (respectively, the $xy$ plane), and $q$ does not. Suppose further that the tangent vector at every point $r$ of $\gamma$ belongs to the cone $K_{1/2}^s(r)$ (respectively, $K_{1/2}^u(r)$) from \eqref{eq:cone-field}. Then, denoting by $d$ the distance from $q$ to the $xz$ plane (respectively, the $xy$ plane) and by $l$ the length of $\gamma$, we have $(d/l)^{\pm 1} \leq C_0$.
	\end{claim}
  \begin{proof}
  Let $\beta$ denote the line segment connecting $p$ and $q$. Since the tangent space of $\gamma$ lies inside the cone field $K_{1/2}^s$ or $K_{1/2}^u$, $(d/\beta)^{\pm 1}$ is bounded independently of $\gamma$. Thus it is enough to prove existence of a constant $\tilde C_0 > 0$ independent of $\gamma$ such that $(\beta/l)^{\pm 1}\leq \tilde C_0$.

  Let us introduce an orthogonal coordinate system $(x_1, x_2, x_3)$ with its origin at $p$ such that $\beta$ lies along the positive $x_1$ axis, so that $p$ and $q$ lie on the $x_1$ axis, with $p$ at the origin. Write $q_1$ for $q$ measured along the $x_1$ axis. Thus the length of $\beta$ is $q_1$.

  Then $x_1$ lies inside of the cone field. In this coordinate system, write $\gamma_1(t)$ for the component of $\gamma(t)$ along the $x_1$ axis, parameterized, say, on $[0, 1]$. By regularity, if $\gamma_1'(t) = 0$ then $\measuredangle(x_1, \gamma'(t)) = \pi/2$, which violates the cone condition. Thus for all $t$, $\gamma_1'(t)\neq 0$, and we may reparameterize $\gamma$ along $x_1$, that is, we may write $\gamma(t) = (t, \gamma_2(t), \gamma_3(t))$.

  We have $\abs{\gamma_j'(t)} = \tan\theta$, where $\theta$ is the angle between $x_1$ and the projection of $\gamma'(t)$ into the $(x_1, x_j)$ plane. Because the cone field is of size $1/2 < 1$, there exists $\tilde\theta \in (0, \pi/2)$ independent of $\gamma$ such that for all $t$, $\abs{\gamma_j'(t)} < \tan\tilde\theta$. Now we have
  \begin{align*}
  l = \int_{0}^{q_1}\norm{\gamma'(t)}dt \leq \int_{0}^{q_1}\sum_j \abs{\gamma_j'(t)}dt \leq q_1(1 + 2\tan\tilde\theta).
  \end{align*}
  The result follows with $\tilde C_0 = 1 + 2\tan\tilde \theta$.
  \end{proof}

	Suppose that $p\in \mathcal{N}$ such that $p_y < l_s$ and $p_z < l_u$. Assume that $n^{(s)}_p, n^{(u)}_p\in\N$ are the smallest natural numbers such that $\mathcal{G}^{n^{(u)}_p}(p)_z \geq l_u$ and $\mathcal{G}^{-n^{(s)}_p}(p)_y \geq l_s$. Then (5) would follow if we could demonstrate existence of a universal upper bound (i.e. independent of all $V < 0$ sufficiently close to zero and $p\in\mathcal{N}$) on $\abs{n^{(s)}_p - n^{(u)}_p}$.

  Let $\beta_s$ (respectively, $\beta_u$) be the shortest line segments connecting $p$ and the $xz$ (respectively, the $xy$) plane. Let us denote by $\dist(p, xy)$ and $\dist(p, xz)$ the distance from the point $p$ to the $xy$ and the $xz$ planes, respectively. Clearly there exists a universal constant $C_1 > 0$ such that
  \begin{align*}
  \left(\frac{\dist(\mathcal{G}^{-n_p^{(s)}}(p)_y, xz)}{\dist(\mathcal{G}^{n_p^{(u)}}(p)_z, xy)}\right)^{\pm 1}\leq C_1.
  \end{align*}
  Observe that $\beta_s$ and $\beta_u$ fall inside the cone fields $K_{1/2}^s$ and $K_{1/2}^u$, respectively. We may assume that these two cone fields are invariant under $D\mathcal{G}$ and $D\mathcal{G}^{-1}$, respectively, by taking smaller $U$ to begin with.

  By the invariance of the cone fields $K^s_{1/2}$ and $K^u_{1/2}$, we see that for every point $r$ on the curve $\mathcal{G}^{-n_p^{(s)}}(\beta_s)$, the tangent vector to the curve at $r$ belongs to $K^s_{1/2}(r)$; similarly for $\beta_u$ with $n_p^{(u)}$ in place of $-n_p^{(s)}$ and $K^u_{1/2}(r)$ in place of $K^s_{1/2}(r)$. Now using Claim \ref{claim:geom-obvious}, with $C_0$ as in the claim, we get
  \begin{align}\label{eq:arb-19}
  \left(\frac{\len(\mathcal{G}^{-n_p^{(s)}}(\beta_s))}{\len(\mathcal{G}^{n_p^{(u)}}(\beta_u))}\right)^{\pm 1}\leq C_1C_0,
  \end{align}
  where $\len(\cdot)$ denotes the length of a curve.

  Let us assume that $n_p^{(s)}>n_p^{(u)}$. We have
  \begin{align*}
  \len(\mathcal{G}^{-n_p^{(s)}}(\beta_s))=\len(\mathcal{G}^{n_p^{(u)}-n_p^{(s)}}\circ\mathcal{G}^{-n_p^{(u)}}(\beta_s)).
  \end{align*}
  Since we have $K_{1/2}^s\subset K_1^s$ and $K_{1/2}^u\subset K_1^u$, Lemma \ref{lem:bdp} is applicable with $K_{1/2}$ in place of $K_1$. So by Lemma \ref{lem:bdp} (1), there exists a universal constant $\tilde{\lambda} > 1$ such that
  \begin{align}\label{eq:arb-20}
  \len(\mathcal{G}^{n_p^{(u)}-n_p^{(s)}}\circ\mathcal{G}^{-n_p^{(u)}}(\beta_s))\geq \tilde{\lambda}^{n_p^{(s)}-n_p^{(u)}}\len(\mathcal{G}^{-n_p^{(u)}}(\beta_s)).
  \end{align}
  Now with $D$ as in Lemma \ref{lem:bdp} (3), from \eqref{eq:arb-19}, \eqref{eq:arb-20}, and Lemma \ref{lem:bdp} (3) we obtain
  \begin{align*}
  \tilde{\lambda}^{n_p^{(s)}-n_p^{(u)}}\frac{\len(\beta_s)}{\len(\beta_u)}\leq DC_1C_0.
  \end{align*}
  Now with $C$ as in (2) of Lemma \ref{lem:rect}, from Lemma \ref{lem:rect} and the above inequality we get
  \begin{align*}
  \tilde{\lambda}^{n_p^{(s)}-n_p^{(u)}}\leq CDC_1C_0.
  \end{align*}
  This gives a universal upper bound on $n_p^{(s)}-n_p^{(u)}$ in case $n_p^{(s)}>n_p^{(u)}$. The case $n_p^{(s)}<n_p^{(u)}$ is handled similarly. This proves (5).

  Finally, (6) follows easily since the neighborhood $\mathcal{N}\cap\Phi(\mathbb{S}_V)$ approaches the origin in the Hausdorff metric as $V\rightarrow 0$.
	\end{proof}

  We are now ready to prove the main result of Step (3), namely, with the notation from Proposition \ref{prop:curve-of-tangs}, we have
  \begin{prop}\label{prop:s3}
  For every $\Delta > 0$ there exists $\epsilon = \epsilon(\Delta) > 0$ such that the following holds. For all $V < 0$ sufficiently close to zero, let $\Lambda_{V, \epsilon}$ be the continuation of $\Upsilon_\epsilon$ from $\mathbb{S}_0$ to $\mathbb{S}_V$ as in Section \ref{sec:proof-1-2}. There exists $V_0 < 0$ depending on $\epsilon$ that satisfies the conclusion of Proposition \ref{prop:curve-of-tangs} with some fixed $\delta^u > 0$, and we have the following.

 For all $V\in (V_0, 0)$ let $\gamma_{V, \epsilon}$ denote the curve of tangencies on $\Phi(\mathbb{S}_V)$ that has been constructed in Step (2), such that with $\hat{M}_{\delta^s, V}^s$ and $k = k(V)$ as in Proposition \ref{prop:curve-of-tangs}, $\mathcal{G}^{-k}(\gamma_{V, \epsilon})$ lies in $\hat{M}_{\delta^s, V}^s$; denote by $\mathcal{C}_{V, \epsilon}^s$ and $\mathcal{C}_{V, \epsilon}^u$ the sets formed by intersections of $\gamma_{V, \epsilon}$ with the stable and the unstable laminations of $\Phi(\Lambda_{V, \epsilon})$, respectively. Then $\mathcal{C}_{V, \epsilon}^s$ and $\mathcal{C}_{V, \epsilon}^u$ are nonempty Cantor sets, $\mathcal{C}_{V,\epsilon}^s$ does not fall entirely into a gap of $\mathcal{C}_{V,\epsilon}^u$, and $\tau(\mathcal{C}_{V,\epsilon}^s), \tau(\mathcal{C}_{V,\epsilon}^u) > \Delta$ (where $\tau$ is the thickness).
  \end{prop}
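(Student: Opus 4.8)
The plan is to transfer the unbounded-thickness property of the stable and unstable Cantor sets of $\Upsilon_\epsilon$ on $\mathbb{S}_0$ (Lemma~\ref{lem:horseshoes-on-S0}, equation~\eqref{eq:horseshoes-on-S0-2}) to the Cantor sets $\mathcal{C}^s_{V,\epsilon}$, $\mathcal{C}^u_{V,\epsilon}$ cut out on the curve of tangencies, showing that this transfer distorts thickness by at most a multiplicative constant that is \emph{uniform in $V$ and in the number of iterates of $\mathcal{G}$ involved}; the value $\epsilon=\epsilon(\Delta)$ is then chosen to absorb this constant. That $\mathcal{C}^s_{V,\epsilon}$ and $\mathcal{C}^u_{V,\epsilon}$ are Cantor sets is the easy part: by Proposition~\ref{prop:curve-of-tangs}, $\gamma_{V,\epsilon}$ is a $C^1$ arc transversal to the leaves of $\mathcal{F}_V^s$ and of $\mathcal{G}^k(\mathcal{F}_V^u\cap\hat M^s_{\delta^s,V})$; since the stable (respectively unstable) lamination of $\Phi(\Lambda_{V,\epsilon})$ is a sublamination of $\mathcal{F}_V^s$ (respectively of $\mathcal{G}^k(\mathcal{F}_V^u\cap\hat M^s_{\delta^s,V})$) whose transversal structure is a Cantor set, its intersection with $\gamma_{V,\epsilon}$ is closed, perfect and totally disconnected, and it is nonempty because $\gamma_{V,\epsilon}$ crosses the strips $\hat B^u_{\delta^u,V}$ and $\hat B^s_{\delta^s,V}$ completely and so meets leaves of both laminations.

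For the thickness, fix a periodic point $q\in\Upsilon_{\epsilon_0}$ as in Lemma~\ref{lem:horseshoes-on-S0}. Given $x\in\mathcal{C}^s_{V,\epsilon}$, I would follow its stable manifold backward: by Lemma~\ref{lem:passage-step-estimate} the backward itinerary first performs a passage near $P_1$ whose combinatorial length is controlled — the balance $\hat n-n^*\le m+n_*$ of item~(5) being the essential input — then leaves $\Phi(U)$ near the positive $y$-axis, and after finitely many further steps governed by the Markov structure and the holonomy of the $C^2$ foliation $\mathcal{F}_0^s$ reaches the portion of $\Upsilon_\epsilon\cap W^s_\loc(q)$ (for $V=0$), respectively its continuation (for $V<0$). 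Throughout the itinerary the relevant tangent directions remain in the invariant cone fields $K^s_{1/2}$, $K^u_{1/2}$ (Lemma~\ref{lem:bdp}(1)), so by Lemma~\ref{lem:bdp}(2)--(3) there is a constant $D>0$, \emph{independent of $V$ and of the number of iterates}, controlling the ratio of the contraction (respectively expansion) factors experienced by any two nearby sub-arcs. Since every gap of the Cantor set together with its two adjacent bridges lies in such a nearby family of sub-arcs, gap and bridges get rescaled by comparable amounts, and hence $\tau(\mathcal{C}^s_{V,\epsilon})$ differs from $\tau(\Upsilon_\epsilon\cap W^s_\loc(q))$ — after also invoking $C^2$-continuity of the continuation in $V$ (Proposition~\ref{prop:c2-continuity}) and continuity of thickness in the $C^2$ topology (\cite[Theorem 2 in Section 4.3]{Palis1993}) to pass from $V=0$ to $V<0$ — by at most a factor $D_0$ depending only on $U$. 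Choosing $\epsilon=\epsilon(\Delta)$ small enough that $\tau(\Upsilon_\epsilon\cap W^s_\loc(q))>D_0\Delta$, which is possible by~\eqref{eq:horseshoes-on-S0-2}, and then $V_0<0$ close enough to $0$, yields $\tau(\mathcal{C}^s_{V,\epsilon})>\Delta$; the bound for $\mathcal{C}^u_{V,\epsilon}$ follows identically after replacing $\mathcal{G}$ by $\mathcal{G}^{-1}$ and using the time-reversal symmetry~\eqref{eq:sym}.

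For the linking assertion — that $\mathcal{C}^s_{V,\epsilon}$ does not fall entirely into a gap of $\mathcal{C}^u_{V,\epsilon}$, nor conversely — I would exploit the freedom in Step~(1): choose the fundamental domains $\hat F^s$, $\hat F^u$ so that their endpoints lie on stable, respectively unstable, manifolds of periodic points of $\Upsilon_\epsilon$. Then the bounding leaves $W^{cs}_{\eta_l^s}$, $W^{cs}_{\eta_r^s}$ (and their center-unstable counterparts) of the strips $\hat B^u_{\delta^u,V}$, $\hat B^s_{\delta^s,V}$ are (center-)stable/unstable manifolds of periodic points of $\Lambda_{V,\epsilon}$, so the extreme leaves of each lamination met by $\gamma_{V,\epsilon}$ are actual stable/unstable manifolds; consequently the convex hulls of $\mathcal{C}^s_{V,\epsilon}$ and $\mathcal{C}^u_{V,\epsilon}$ in $\gamma_{V,\epsilon}$ have overlapping, fully spanning extent. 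Combined with the large-thickness bound just proved — which forces every gap of each Cantor set to be a small fraction of its convex hull — this gives the linking condition; this is precisely the configuration handled in \cite[Lemma 9 and the surrounding discussion]{Newhouse1979}.

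The step I expect to be the main obstacle is the thickness estimate. Thickness is not a $C^1$ invariant and is genuinely sensitive to small-scale geometry, while the number of iterates of $\mathcal{G}$ needed to connect $\gamma_{V,\epsilon}$ to the core of the horseshoe grows without bound as $V\to 0$ (the surface $\mathbb{S}_V$ hugs the singularity $P_1$ of $\mathbb{S}_0$ ever more closely, producing arbitrarily long passages). The entire purpose of Lemmas~\ref{lem:bdp} and~\ref{lem:passage-step-estimate} is to render the distortion uniform in this number of iterates: without the length-balance estimate~\ref{lem:passage-step-estimate}(5), the passage near $P_1$ could in principle crush the thickness to zero, and without the bounded-distortion bounds~\ref{lem:bdp}(2)--(3), accumulated nonlinearity along the hyperbolic part of the orbit could do the same. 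Keeping track of which cone field ($K^s_{1/2}$ versus $K^u_{1/2}$, versus the variable-opening $K^u(p)$ from Step~(2)) is in force at each stage, and matching the combinatorics of Lemma~\ref{lem:passage-step-estimate} with the geometry of the tangency curve, is where the care is needed.
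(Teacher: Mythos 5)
Your toolbox is the right one (thickness of the model Cantor sets from Lemma~\ref{lem:horseshoes-on-S0}, holonomy, continuity of thickness, the bounded-distortion Lemma~\ref{lem:bdp} and the balanced-passage Lemma~\ref{lem:passage-step-estimate}), and you correctly single out uniformity of distortion over the unboundedly long passages as the crux. But the architecture of your argument attaches the hard machinery to the wrong Cantor set and disposes of the set that actually needs it with a symmetry that does not apply. In the paper, $\tau(\mathcal{C}^s_{V,\epsilon})>\Delta$ is obtained \emph{directly}, with no passage and no distortion constant to absorb: $\epsilon$ is chosen so that the stable lamination of $\Phi(\Lambda_{0,\epsilon})$ cuts a Cantor set of thickness $>\Delta$ on the $z$ axis; continuity of thickness in $V$ and a holonomy along $\mathcal{F}^s_V$ with Lipschitz constant close to $1$ transfer this to $\gamma_{V,\epsilon}$, which is legitimate precisely because $\gamma_{V,\epsilon}$ is $C^1$-close to the $z$ axis (Lemmas~\ref{lem:arb1} and~\ref{lem:arb2}). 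Your plan of dragging $\mathcal{C}^s_{V,\epsilon}$ backward through the passage to compare it with the core of the horseshoe is not only unnecessary, it runs into a genuine obstruction you do not address: by the analysis behind Lemma~\ref{lem:bdp-prep}, the tangent of $\mathcal{G}^{-k}(\gamma_{V,\epsilon})$ lies in the \emph{stable} cone near the entry region, i.e.\ the pulled-back tangency curve is nearly tangent to the stable leaves there, so the transversal intersection that defines the Cantor set (and the holonomy comparison you would need) degenerates on that side.

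The set for which the passage machinery is indispensable is $\mathcal{C}^u_{V,\epsilon}$, because the unstable leaves meeting $\gamma_{V,\epsilon}$ are $\mathcal{G}^{k}$-images, $k=k(V)\to\infty$, of leaves crossing near the $y$ axis. Your one-line reduction ``identical after replacing $\mathcal{G}$ by $\mathcal{G}^{-1}$ and using \eqref{eq:sym}'' fails here: the configuration is not time-reversal symmetric, since the curve of tangencies sits at the exit (unstable) end of the passage; reversing time would require a tangency curve at the entry end, which is not what Proposition~\ref{prop:curve-of-tangs} provides. What is actually needed — and what your sketch never supplies — is the paper's argument: pull back to $\tilde{\mathcal{C}}^u_{V,\epsilon}=\mathcal{G}^{-k}(\mathcal{C}^u_{V,\epsilon})$ near the $y$ axis, get thickness $>\tilde\Delta$ there by the same holonomy argument, and then bound the distortion of thickness under $\mathcal{G}^{k}$ by showing that $\norm{D\mathcal{G}^{k}(u)}^{\pm1}$ is bounded uniformly in $V$ and $p$ along the curve. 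That boundedness rests on the cone \emph{switch} of Lemma~\ref{lem:bdp-prep} ($D\mathcal{G}^{n_*}(u)\in K_1^s$ before the passage, $D\mathcal{G}^{n^*}(u)\in K_1^u$ after), combined with the balance $\abs{(k-n^*)-n_*}\le m$ from Lemma~\ref{lem:passage-step-estimate}(5) and Lemma~\ref{lem:bdp}; your statement that the tangent directions ``remain in the invariant cone fields'' misses exactly this point — a vector staying in one cone throughout would be contracted (or expanded) by an unbounded factor over the passage, and no uniform thickness bound would follow.
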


  In the proof of the proposition, we use the notation from Steps (1) and (2).

  \begin{proof}[Proof of Proposition \ref{prop:s3}]
  That for all $\epsilon > 0$ sufficiently small and for all $V < 0$ sufficiently close to zero, $\mathcal{C}_{V, \epsilon}^s$ and $\mathcal{C}_{V, \epsilon}^u$ are nonempty Cantor sets and one does not fall into a gap of the other follows by construction of $\gamma_{V, \epsilon}$ (indeed, recall that $\hat{M}_{\delta^s, V}^s$ is a fundamental domain).

  Fix $\Delta > 0$ and $\delta, \delta^u > 0$ with $\delta^u\in(0, \delta)$ as in Proposition \ref{prop:tangencies}. Fix $\epsilon > 0$ so small, that when the stable lamination of $\Phi(\Lambda_{0, \epsilon})$ is orthogonally projected via ${\mathbf{P}_V^u}^{-1}$ into $B_\delta^u$, its intersection with the $z$ axis forms a Cantor set of thickness larger than $\Delta$. Indeed, this is possible due to Lemma \ref{lem:horseshoes-on-S0} with the observation that since $\Phi$ is a diffeomorphism, and in particular $\Phi$ and $\Phi^{-1}$ are both Lipschitz, the distortion of the thickness under the action by $\Phi$ is bounded by multiplication by a constant.
  \begin{lem}\label{lem:arb1}
  Denote by $W^s_{V, \epsilon}$ the image of the stable lamination of $\Phi(\Lambda_{V, \epsilon})$ under ${\mathbf{P}_V^u}^{-1}$. There exists $\tilde{\epsilon}>0$, $\tilde{V}_0 < 0$, and $\delta^u\in(0, \delta)$ sufficiently small, such that for all $V\in (\tilde{V}_0, 0)$ the following holds. Assume that $\gamma$ is a compact curve in $\hat{B}_{\delta^u, V}^u$ such that the distance from ${\mathbf{P}_V^u}^{-1}(\gamma)$ to the $z$ axis in the $C^1$ topology is not larger than $\tilde{\epsilon}$. Then $\gamma$ intersects $\mathbf{P}_{V}^u(W^s_{V, \epsilon})$ transversally in a Cantor set whose thickness is larger than $\Delta$.
  \end{lem}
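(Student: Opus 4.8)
The plan is to exhibit $\gamma\cap\mathbf{P}_V^u(W^s_{V,\epsilon})$ as the image, under a holonomy along the (projected) stable foliation, of the Cantor set cut out of the positive $z$ axis by that same foliation; this holonomy will be a $C^1$ diffeomorphism distorting thickness by a factor as close to $1$ as we wish, so that the large thickness of the $z$-axis Cantor set (which by Lemma \ref{lem:horseshoes-on-S0} can be taken arbitrarily large) is inherited by $\gamma\cap\mathbf{P}_V^u(W^s_{V,\epsilon})$.

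First I would fix the reference picture on $\mathbb{S}_0$: shrinking $\epsilon$ if necessary — which by \eqref{eq:horseshoes-on-S0-1} only shrinks $\Lambda_{V,\epsilon}$ and by Lemma \ref{lem:horseshoes-on-S0} only increases the thicknesses in play — I would arrange that the Cantor set $C_0$ cut out of the positive $z$ axis by the orthogonal projection via $\mathbf{P}_0^u{}^{-1}$ of the stable lamination of $\Phi(\Lambda_{0,\epsilon})$ into $B_\delta^u$ has thickness $>2\Delta$. Then I would transfer this to $V<0$: by the $C^2$-continuous dependence of the stable manifolds of $\Lambda_{V,\epsilon}$ on $V$ (used already in Section \ref{sec:quadratic}), the continuity of $\mathbf{P}_V^u$ in $V$, and the continuity of thickness under $C^1$ perturbations (\cite[Chapter 4]{Palis1993}), there is $\tilde V_0<0$ such that for $V\in(\tilde V_0,0)$ the analogous $z$-axis Cantor set $C_0^{(V)}$ still has thickness $>2\Delta$. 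I would also recall from Section \ref{sec:quadratic} that, after projection into $B_\delta^u$, the stable manifolds of $\Phi(\Lambda_{V,\epsilon})$ are graphs $z=\mathfrak{s}_\eta(x)$ with $|\mathfrak{s}_\eta'|+|\mathfrak{s}_\eta''|<\Delta<\tfrac18$ uniformly in $\eta$ and in $V$ near $0$, and extend them to the $C^1$ foliation $\mathcal{F}_V^u$ of $\hat{B}_{\delta^u, V}^u$ with $C^2$ leaves and $C^1$ tangent field from Step (1). These leaves are uniformly close to horizontal, while both the $z$ axis and $\mathbf{P}_V^u{}^{-1}(\gamma)$ (which is $C^1$-$\tilde\epsilon$-close to it) are uniformly close to vertical, so transversality of $\gamma$ with $\mathbf{P}_V^u(W^s_{V,\epsilon})$ is immediate from the angle bounds.

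The core of the argument is the distortion estimate. Since $U$, and hence $\Phi(U)$, was taken small, $\mathcal{G}$ is $C^1$-close to its linear part $D\mathcal{G}_\mathbf{0}$, so $\mathcal{F}_V^u$ is $C^1$-close to the horizontal foliation $\{z=\mathrm{const}\}$; moreover, since $\Phi(\mathbb{S}_0)$ is quadratically tangent to the $xz$ plane along the $z$ axis (Lemma \ref{lem:quadratic-surfaces}) and $\Phi(\mathbb{S}_V)$ depends $C^2$-continuously on $V$ away from the singularity, the graph $\Phi(\mathbb{S}_V)$ over $B_{\delta^u}^u$ is $C^1$-close to the $xz$ plane once $\delta^u$ and $|V|$ are small, whence $\mathbf{P}_V^u$ restricted to curves is $C^1$-close to an isometric embedding. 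I would then write the holonomy $H_V$ from the $z$ axis to $\mathbf{P}_V^u{}^{-1}(\gamma)$ along the leaves of $\mathcal{F}_V^u$ via the implicit function theorem — the leaf through $(0,\eta)$ meets $\mathbf{P}_V^u{}^{-1}(\gamma)=\{x=h(z)\}$ where $z=\mathfrak{s}_\eta(h(z))$, with $\mathfrak{s}_\eta(0)=\eta$ so $\partial_\eta\mathfrak{s}_\eta(0)=1$ and $\partial_\eta\mathfrak{s}_\eta(x)=1+O(\delta^u)$ — concluding that $H_V$, hence the composition $\mathbf{P}_V^u\circ H_V$ carrying $C_0^{(V)}$ onto $\gamma\cap\mathbf{P}_V^u(W^s_{V,\epsilon})$, is a $C^1$ diffeomorphism whose derivative and whose logarithmic variation are uniformly small perturbations of the trivial ones, all bounds tending to the trivial ones as $\tilde\epsilon,\delta^u,\|\mathcal{G}-D\mathcal{G}_\mathbf{0}\|_{C^1}\to0$. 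By the standard effect-on-thickness estimate (\cite[Chapter 4]{Palis1993}), $\tau\big(\gamma\cap\mathbf{P}_V^u(W^s_{V,\epsilon})\big)\ge\mu\cdot\tau(C_0^{(V)})$ with $\mu=\mu(\tilde\epsilon,\delta^u,U)\to1$; choosing $\tilde\epsilon,\delta^u$ small enough (and $U$ small from the outset) that $\mu>\tfrac12$ gives thickness $>\Delta$. The intersection is a Cantor set since it is the homeomorphic image of $C_0^{(V)}$.

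The hard part will be making the distortion constant $\mu$ \emph{uniform} — over all admissible curves $\gamma$ and all $V\in(\tilde V_0,0)$ — rather than merely finite for each fixed $\gamma,V$; this is precisely why one needs the uniform $C^2$-flatness of the projected stable leaves (Section \ref{sec:quadratic}, via Proposition \ref{prop:c2-continuity}), the coordinates $\Phi$ built with $U$ small so that $\mathcal{G}$ is $C^1$-close to $D\mathcal{G}_\mathbf{0}$ and the surfaces $\Phi(\mathbb{S}_V)$ are nearly flat over $B_{\delta^u}^u$, and the initial choice of $\epsilon$ with thickness $2\Delta$ rather than $\Delta$ so as to leave room to absorb the small distortion. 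One must also check, routinely, the monotonicity of thickness under restriction to a subinterval with endpoints in the Cantor set, so that a $\gamma$ that does not span all of $\hat{B}_{\delta^u, V}^u$ still meets the stable lamination in a Cantor set whose thickness is $\ge\tau(C_0^{(V)})$ before distortion; with this in place the proof goes through verbatim.
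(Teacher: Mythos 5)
Your argument is correct and is essentially the paper's proof: both take the Cantor set cut out of the $z$ axis by the stable lamination (thickness $>\Delta$ via Lemma \ref{lem:horseshoes-on-S0}, preserved for small $\abs{V}$ by continuity of thickness in $V$), transport it to $\gamma$ by the holonomy along the stable foliation of $\hat{B}_{\delta^u, V}^u$, which is bi-Lipschitz with constant arbitrarily close to one for $\tilde\epsilon$, $\delta^u$, $\abs{V}$ small, and then absorb the near-isometric projection $\mathbf{P}_V^u$. The only differences are cosmetic: the paper works with the strict inequality $\tau>\Delta$ rather than your explicit $2\Delta$ margin, cites the $C^1$ dependence of the foliation on $V$ where you do an implicit-function computation for the holonomy derivative, and denotes the stable foliation of $\hat{B}_{\delta^u, V}^u$ by $\mathcal{F}_V^s$ (your $\mathcal{F}_V^u$ is a superscript slip, not a mathematical error).
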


  \begin{proof}
  That, given the assumptions with $\tilde{\epsilon}> 0$ sufficiently small and $\tilde{V}_0$ sufficiently close to zero, $\gamma$ intersects $\mathbf{P}_V^u(W_{V, \epsilon}^s)$ transversally is obvious since $W^s_{V, \epsilon}$ intersects the $z$ axis transversally. Let us prove that this intersection forms a Cantor set of thickness larger than $\Delta$ (possibly taking $\tilde{\epsilon}$ and taking $\tilde{V}_0$ closer to zero as necessary).

  By assumptions (the paragraph preceding the statement of Lemma \ref{lem:arb1}) the intersection of $W_{0, \epsilon}^s$ with the $z$ axis forms a Cantor set of thickness larger than $\Delta$. On the other hand, the thickness of  the intersection of $W_{V, \epsilon}^s$ with the $z$ axis varies continuously in $V$ (see Theorem 2 in Chapter 4 of \cite{Palis1993}). It follows that for all $V < 0$ sufficiently close to zero, the thickness of the intersection of $W_{V, \epsilon}^s$ with the $z$ axis is also larger than $\Delta$.

  Now view $W_{V, \epsilon}^s$ as a sublamination of ${\mathbf{P}_V^u}^{-1}(\mathcal{F}_V^s)$. For any $V < 0$ sufficiently close to zero and any curve $\tilde{\gamma}$ in ${\mathbf{P}_V^s}^{-1}(\hat{B}_{\delta^u, V}^s)$ sufficiently close to the $z$ axis in the $C^1$ topology, the intersection of $\tilde{\gamma}$ with $W_{V, \epsilon}^s$ can be viewed as the $C^1$ image of the intersection of $W_{V,\epsilon}^s$ with the $z$ axis under the holonomy map defined as projection along the foliation ${\mathbf{P}_V^u}^{-1}(\mathcal{F}_V^s)$. Moreover, the foliation $\mathcal{F}_V^s$ can be constructed in such a way as to depend continuously in the $C^1$ topology on $V$ (hence ${\mathbf{P}_V^u}^{-1}(\mathcal{F}_V^s)$ also depends continuously on $V$ in the $C^1$ topology); see Theorem 8 in Appendix 1 of \cite{Palis1993}. It follows that for all $L > 1$ there exist $\tilde{\epsilon} > 0$ and $\tilde{V}_0 < 0$ such that for all $V\in (\tilde{V}_0, 0)$ and every $\tilde{\gamma}$ in ${\mathbf{P}_V^u}^{-1}(\hat{B}_{\delta^u, V}^s)$ $\tilde{\epsilon}$-close to the $z$ axis in the $C^1$ topology, the holonomy map along ${\mathbf{P}_V^u}^{-1}(\mathcal{F}_V^s)$ from $\tilde{\gamma}$ to the $z$ axis, as well as its inverse, are Lipschitz with a Lipschitz constant $L$.

  Now taking $L$ above sufficiently close to one, we can make sure that the thickness of the intersection of $\tilde{\gamma}$ with $W_{V, \epsilon}^s$ is also larger than $\Delta$. Finally, projection of $\tilde{\gamma}$ by $\mathbf{P}_V^u$ does not destroy these bounds provided that $V < 0$ and $\delta^u > 0$ are sufficiently close to zero (so that $\mathbf{P}_V^u$ is close in the $C^1$ topology to the identity map).
  \end{proof}

  We wish to apply Lemma \ref{lem:arb1} with $\gamma = \gamma_{V, \epsilon}$. For this we need to make sure that as $V\rightarrow 0$, $\gamma_{V, \epsilon}$ approaches the $z$ axis in the $C^1$ topology. This is done in the next lemma.

  \begin{lem}\label{lem:arb2}
  With $\gamma_{V, \epsilon}$ as in the statement of Proposition \ref{prop:s3}, $\gamma_{V, \epsilon}$ approaches the $z$ axis in the $C^1$ topology as $V\rightarrow 0$.
  \end{lem}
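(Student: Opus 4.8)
The curve $\gamma_{V,\epsilon}$ is, by construction (Step (2) and Proposition~\ref{prop:curve-of-tangs}), the curve of quadratic tangencies between the leaves of $\mathcal{G}^k(\mathcal{F}_V^u\cap \hat M_{\delta^s,V}^s)$ and the leaves of $\mathcal{F}_V^s$, lying in $\hat B_{\delta^u,V}^u$. The strategy is to trace the two foliations back to their $C^2$-limiting positions as $V\to 0$ and read off the limiting position of the tangency locus. The key inputs are Proposition~\ref{prop:c2-continuity} (the center-stable and center-unstable manifolds of $\Lambda_\epsilon$ converge to the $xy$- and $xz$-planes respectively, in the $C^2$ topology, as the entry-point parameter $\eta\to 0$), together with the fact that $k=k(V)\to\infty$ as $V\to 0$ (established in the proof of Proposition~\ref{prop:tangencies} / Lemma~\ref{lem:tech-1}) and the cone-invariance estimates of Lemma~\ref{lem:bdp}.

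\emph{First step.} I would show that the leaves of $\mathcal{F}_V^s$ in $\hat B_{\delta^u,V}^u$ converge in the $C^2$ topology to horizontal lines parallel to the $x$-axis (equivalently, that $\hat B_{\delta^u,V}^u$ collapses onto a segment of the $z$-axis and its stable leaves flatten out). This is immediate from Proposition~\ref{prop:c2-continuity} applied to the center-stable manifolds bounding $\hat B_{\delta^u,V}^u$, combined with the $C^2$-continuity in $V$ of the stable lamination of $\Lambda_{V,\epsilon}$ and of the $C^1$ foliation $\mathcal{F}_V^s$ extending it (the latter being the content of Step (1), via \cite[Theorem 6.4b]{Hirsch1977}). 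In particular the $xz$-projected graphs $\mathfrak{s}_\eta$ of these leaves satisfy $\abs{\mathfrak{s}_\eta'}+\abs{\mathfrak{s}_\eta''}\to 0$ uniformly.

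\emph{Second step.} I would show that $\mathcal{G}^k(\nu)$, for a leaf $\nu$ of $\mathcal{F}_V^u$ in $\hat M_{\delta^s,V}^s$, also converges (in the relevant region near its tangency point) to a curve close to the $z$-axis in the $C^1$ topology as $V\to 0$. The cleanest way: since $k=k(V)\to\infty$, and since the unstable cone field $K^u$ of \eqref{eq:cone-field} is $D\mathcal{G}$-invariant and strictly contracting toward the $\mathbf{z}$-direction under $D\mathcal{G}_{\mathbf 0}$ (by \eqref{eq:differential}), the angle estimate \eqref{eq:arb1} together with the bounded-distortion / angle-summability analysis already carried out in the proof of Lemma~\ref{lem:bdp}(2) (the bound on $\sum \measuredangle(v_n,\mathbf z)$) forces the tangent directions of $\mathcal{G}^k(\nu)$ to lie in cones of size $O(\tilde\mu^{\,k})\to 0$ about $\mathbf z$, uniformly in the leaf. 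Hence the $xz$-projection of $\mathcal{G}^k(\nu)$ is the graph of a function whose first derivative tends to $0$; i.e.\ $\mathcal{G}^k(\nu)$ approaches the $z$-axis in $C^1$. One must also check the tangency point does not escape to the boundary of $\hat B^u_{\delta^u,V}$, but this is part of Lemma~\ref{lem:tech-1} and Proposition~\ref{prop:tangencies}.

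\emph{Conclusion and main obstacle.} Since $\gamma_{V,\epsilon}$ is the locus of tangency points of these two families, and each tangent point lies on a leaf of each family, its position is squeezed between the limiting positions of the two foliations; as both families converge to curves $C^1$-close to the $z$-axis, so does $\gamma_{V,\epsilon}$ — provided we also control the \emph{tangent direction} of $\gamma_{V,\epsilon}$ itself. For that I would use Proposition~\ref{prop:curve-of-tangs}: $\gamma_{V,\epsilon}$ is transversal to both foliations, so its tangent direction is pinched between the (converging, flattening) tangent fields of $\mathcal{F}_V^s$ and $\mathcal{G}^k(\mathcal{F}_V^u)$; but since both of those tangent fields converge to directions spanning the $x$- and $z$- axes respectively inside a surface $\Phi(\mathbb S_V)$ itself converging to the $xz$-plane, a direct angle bound forces the tangent of $\gamma_{V,\epsilon}$ toward $\mathbf z$. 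I expect the main obstacle to be bookkeeping of uniformity: the number of iterates $k$ depends on $V$, so the "$C^1$-closeness" of $\mathcal{G}^k(\nu)$ to the $z$-axis must be shown to be uniform in the choice of leaf $\nu$ and to genuinely improve as $V\to 0$ (rather than being overwhelmed by accumulated distortion of $\mathcal{G}$ away from $D\mathcal{G}_{\mathbf 0}$). This is exactly where Lemma~\ref{lem:bdp}(2) and Lemma~\ref{lem:orbit} are needed — they guarantee the accumulated distortion of angles and expansion rates is bounded independently of $k$, so the contraction toward $\mathbf z$ coming from the $\tilde\mu^{\,k}$ factor wins.
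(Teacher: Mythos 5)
Your Step 1 and the $C^0$ part of the claim are fine (the paper obtains the $C^0$ closeness of $\gamma_{V,\epsilon}$ to the $z$ axis directly from the proof of Lemma \ref{lem:tech-1}), but Step 2 and the concluding ``pinching'' argument contain a genuine error. The tangent directions of $\mathcal{G}^k(\nu)$ are \emph{not} driven into cones of size $O(\tilde\mu^{\,k})$ about $\mathbf{z}$: a leaf $\nu$ of $\mathcal{F}_V^u$ starts near the $y$ axis on a surface nearly tangent to the $xy$ plane, so its tangent vectors do not lie in the unstable cone to begin with, and its image $\mathcal{G}^k(\nu)$ is a \emph{folded} curve whose $xz$-projection is a graph with $\abs{\mathfrak{u}''}>\Delta$; at the tangency point its tangent coincides with that of a nearly horizontal stable leaf. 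If all tangents of $\mathcal{G}^k(\nu)$ were $O(\tilde\mu^{\,k})$-close to $\mathbf{z}$, the tangencies of Proposition \ref{prop:tangencies} could not exist at all --- this is precisely the dichotomy exploited in Section \ref{sec:quadratic}, where tangent vectors that enter the unstable cone are shown to be transversal to the stable leaves. For the same reason, at a point of $\gamma_{V,\epsilon}$ the tangent fields of $\mathcal{F}_V^s$ and of $\mathcal{G}^k(\mathcal{F}_V^u)$ \emph{coincide} and are nearly horizontal, so the tangent of $\gamma_{V,\epsilon}$, which is transversal to both foliations and nearly vertical, cannot be deduced by squeezing it between those two fields; that heuristic would in fact point to a nearly horizontal direction, the opposite of what is needed.

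The missing idea is a curvature-versus-flatness comparison for the tangency locus itself, which is how the paper argues: pull $\gamma_{V,\epsilon}$ back by a fixed number $n^*<k$ of iterates (possible for $V$ close to zero) into a region where, by Proposition \ref{prop:c2-continuity}, the stable leaves project $\tilde\delta$-close to the horizontal in $C^1$ while the pulled-back unstable leaves still have second derivative larger than $\Delta$ in the $xz$-projection; the convexity argument of Claim \ref{claim:arb3} (the same device as in Section \ref{sec:unfolding}) then forces the projection of $\mathcal{G}^{-n^*}(\gamma_{V,\epsilon})$ to make an angle of at least $\frac{\pi}{2}-\theta$ with the horizontal, and the $C^2$-convergence of $\Phi(\mathbb{S}_V)$ to $\Phi(\mathbb{S}_0)$ away from the singularities bounds the $y$-component of its tangent, so that the tangent space of $\mathcal{G}^{-n^*}(\gamma_{V,\epsilon})$ lies in $K^u_{\tilde\epsilon}$. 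One then pushes forward $n^*$ times using the $D\mathcal{G}$-invariance of $K^u_{\tilde\epsilon}$ in a tubular neighborhood of the positive $z$ axis, which the curve never leaves. The tools you lean on (Lemma \ref{lem:bdp} and Lemma \ref{lem:orbit}) control distortion along orbits staying near the fixed point and are used elsewhere (Step (3) thickness estimates), but they are not what makes this lemma work.
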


  \begin{proof}
  Fix $\tilde{\epsilon} > 0$ arbitrarily small. It is enough to show that there exists $\tilde{V}_0 < 0$ sufficiently close to zero, such that for all $V \in (\tilde{V}_0, 0)$, $\gamma_{V, \epsilon}$ is $\tilde{\epsilon}$-close to the $z$ axis in the $C^0$ topology, and the tangent space of $\gamma_{V, \epsilon}$ falls inside the cone field $K^u_{\tilde{\epsilon}}$ from \eqref{eq:cone-field}.

  That for all $V < 0$ sufficiently close to zero, $\gamma_{V, \epsilon}$ is $\tilde{\epsilon}$-close to the $z$ axis in the $C^0$ topology follows from (the proof of) Lemma \ref{lem:tech-1}.

  Observe that for all $\tilde{\epsilon}>0$ there exists a tubular neighborhood of the positive $z$ axis, call it $\mathcal{T}_{\tilde{\epsilon}}$, such that for all $x\in\mathcal{T}_{\tilde{\epsilon}}$, $D\mathcal{G}(K_{\tilde{\epsilon}}^u(x))\subset K_{\tilde{\epsilon}}^u(\mathcal{G}(x))$. Indeed, this follows easily by continuity (the positive $z$ axis is invariant under $\mathcal{G}$ and $D\mathcal{G}$ and we have \eqref{eq:differential}). Now, to finish the proof, let us show that for all $V < 0$ sufficiently close to zero, there exists $n^*\in\N$, $n^* < k$ (here $k$ is as in the statement of Proposition \ref{prop:s3}), such that for all $n\in\set{0, \dots, n^*}$, $\mathcal{G}^{-n}(\gamma_{V, \epsilon})$ lies in $\mathcal{T}_{\tilde{\epsilon}}$, and the tangent space of $D\mathcal{G}^{-n^*}(\gamma_{V, \epsilon})$ falls inside the cone field $K_{\tilde{\epsilon}}^u$.

  Observe, again by (the proof of) Lemma \ref{lem:tech-1} that given any $n^*\in\N$, $n^* < k$, for all $V < 0$ sufficiently close to zero and for all $n\in\set{0, \dots, n^*}$, $\mathcal{G}^{-n}(\gamma_{V, \epsilon})$ indeed lies in $\mathcal{T}_{\tilde{\epsilon}}$. On the other hand, given $\tilde{\delta} > 0$, there exists $n^*\in\N$, $n^* < k$, such that for all $V < 0$ sufficiently close to zero, the orthogonal projection into the $xz$ plane of every leaf of $\mathcal{G}^{-n^*}(\mathcal{F}_V^s)$ is $\tilde{\delta}$-close to the horizontal in the $C^1$ topology, which follows from Proposition \ref{prop:c2-continuity}. But then given an arbitrarily small $\theta\in (0, \frac{\pi}{2})$, we can choose $\tilde{\delta} > 0$ sufficiently small and $n^*$ as above, such that for all $V < 0$ sufficiently close to zero, the orthogonal projection of $\mathcal{G}^{-n^*}(\gamma_{V, \epsilon})$ into the $xz$ plane makes the angle with the horizontal of at least $\frac{\pi}{2}-\theta$ radians (this can be shown using exactly the same argument as in Section \ref{sec:unfolding}, particularly Claim \ref{claim:arb3}). Finally, we can choose $\mathcal{T}_{\tilde{\epsilon}}$ sufficiently small above to begin with, such that for all $V < 0$ sufficiently close to zero, the vectors tangent to $\mathcal{G}^{-n^*}(\gamma_{V, \epsilon})$ make the angle of at most $\theta$ degrees with the $xz$ plane (this follows since away from the singularities, $\Phi(\mathbb{S}_V)$ tends to $\Phi(\mathbb{S}_0)$ in the $C^2$ topology as $V$ tends to zero). Thus, provided that we take $\tilde{\delta}>0$ and $\theta\in(0, \frac{\pi}{2})$ sufficiently small, there exists $n^*\in\N$, $n^* < k$, such that for all $V < 0$ sufficiently small the tangent space of $\mathcal{G}^{-n^*}(\gamma_{V, \epsilon})$ falls inside the cone field $K_{\tilde{\epsilon}}^u$, and, from the previous paragraph, for all $n\in\set{0, \dots, n^*}$, $\mathcal{G}^{-n}(\gamma_{V, \epsilon})$ lies in $\mathcal{T}_{\tilde{\epsilon}}$.
  \end{proof}

  Thus we see that there exists $\epsilon > 0$ such that for all $V < 0$ sufficiently close to zero, $\tau(\mathcal{C}_{V, \epsilon}^s) > \Delta$. Next we show that $\epsilon>0$ can be chosen sufficiently small, such that for all $V < 0$ sufficiently close to zero, $\tau(\mathcal{C}_{V, \epsilon}^u) > \Delta$.

  Fix $\tilde{\Delta}\geq \Delta$. With $k = k(V)$ as above, so that $\mathcal{G}^{-k}(\gamma_{V, \epsilon})$ lies in $\hat{M}_{\delta^s, V}^s$, observe that by the same arguments as above we can choose $\epsilon > 0$ sufficiently small such that for all $V < 0$ sufficiently close to zero, the thickness of $\tilde{\mathcal{C}}_{V, \epsilon}^u\eqdef \mathcal{G}^{-k}(\mathcal{C}_{V, \epsilon}^u)$ is larger than $\tilde{\Delta}$. Let us prove that with $\tilde{\Delta}$ sufficiently large, for all $\epsilon > 0$ sufficiently small and all $V < 0$ sufficiently close to zero, the thickness of $\mathcal{G}^k(\tilde{C}_{V, \epsilon}^u)$ is larger than $\Delta$. To control the distortion of the thickness under the action by $\mathcal{G}$, we use the bounded distortion property from Lemmas \ref{lem:bdp} and \ref{lem:passage-step-estimate}.

  Let $O$ be as in Lemma \ref{lem:passage-step-estimate}. We may assume that $\hat{M}_{\delta^s, V}^s$ and $\mathcal{G}^k(\hat{M}_{\delta^s, V}^s)$ lie in $O$ and, by Lemma \ref{lem:passage-step-estimate}, for all $n\in\set{1, \dots, k}$ and every point $p$ of $\gamma_{V, \epsilon}$, $\mathcal{G}^{-n}(p)\in U$.

  \begin{lem}\label{lem:bdp-prep}
  For all $V < 0$ sufficiently close to zero the following holds.

  Take a point $p = p(V)$ in $\mathcal{G}^{-k}(\gamma_{V, \epsilon})$. Let $n_* = n_*(p)\in \N$ and $n^* = n^*(p) \in \N$ be such that $\mathcal{G}^{n_* + 1}(p)\in\mathcal{N}$, $\mathcal{G}^{n^*-1}(p)\in\mathcal{N}$, $\mathcal{G}^{n_*}(p)\notin\mathcal{N}$, and $\mathcal{G}^{n^*}(p)\notin \mathcal{N}$; here $\mathcal{N}$ is as in Lemma \ref{lem:passage-step-estimate}. Let $u = u(p)$ be the unit tangent vector to $\mathcal{G}^{-k}(\gamma_{V, \epsilon})$ at the point $p$. Then $D\mathcal{G}^{n_*}(u)\in K_1^s(\mathcal{G}^{n_*}(p))$ and $D\mathcal{G}^{n^*}(u)\in K_1^u(\mathcal{G}^{n^*}(p))$.
  \end{lem}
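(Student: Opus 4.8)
The plan is to track the unit tangent vector $u=u(p)$ forward along the orbit piece $p,\mathcal{G}(p),\dots,\mathcal{G}^{k}(p)$, where $k=k(V)$ is as in Proposition~\ref{prop:s3} so that $\mathcal{G}^{k}(p)\in\gamma_{V,\epsilon}$, and to locate it precisely at the two transition instants $n_*$ (the last iterate before the orbit enters $\mathcal{N}$) and $n^*$ (the first iterate after it leaves $\mathcal{N}$). The two ingredients are the invariant cone fields and the geometric control from Lemmas~\ref{lem:passage-step-estimate} and~\ref{lem:rect}. First I would record, from \eqref{eq:differential} and the $C^1$-closeness of $D\mathcal{G}$ to $D\mathcal{G}_{\mathbf 0}$ used in Lemma~\ref{lem:bdp}(1), that $D\mathcal{G}$ carries $K_1^{u}(q)$ strictly into $K_1^{u}(\mathcal{G}(q))$ (with expansion) and $D\mathcal{G}^{-1}$ carries $K_1^{s}(q)$ strictly into $K_1^{s}(\mathcal{G}^{-1}(q))$ (with expansion), as long as the relevant points lie in $\Phi(U)$. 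Hence it suffices to verify membership at the single instants $n_*$ and $n^*$: once $D\mathcal{G}^{n^*}(u)\in K_1^{u}(\mathcal{G}^{n^*}(p))$ this persists under all later iterates, and once $D\mathcal{G}^{n_*}(u)\in K_1^{s}(\mathcal{G}^{n_*}(p))$ it holds for every $n\in\{0,\dots,n_*\}$ as well; the uniformly bounded number of iterates inside $\mathcal{N}$ (Lemma~\ref{lem:passage-step-estimate}(4)) is a negligible middle stretch that the later bounded-distortion argument can absorb.

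For the instant $n^*$: by Lemma~\ref{lem:arb2}, after taking $V$ close enough to $0$ we may assume the tangent $D\mathcal{G}^{k}(u)$ of $\gamma_{V,\epsilon}$ at $\mathcal{G}^{k}(p)$ lies in $K_{\tilde\epsilon}^{u}$ with $\tilde\epsilon$ as small as desired. By Lemma~\ref{lem:passage-step-estimate}(3) the entire segment $\mathcal{G}^{n^*}(p),\dots,\mathcal{G}^{k}(p)$ lies in the region where the $z$-coordinate dominates, so by Lemma~\ref{lem:rect}(3) (with $\delta=\pi/6$) the tangent plane of $\Phi(\mathbb{S}_V)$ at each of these points makes an angle at most $\pi/6$ with the $xz$-plane. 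Pulling $D\mathcal{G}^{k}(u)$ back through this $z$-dominant stretch, the only mechanism that can spoil cone membership is the per-step multiplication of the ratio $\norm{v_y}/\norm{v_z}$ by a bounded factor, so after the $k-n^*$ backward steps this ratio is at most $\tilde\epsilon$ times a fixed exponential of $k-n^*$; on the other hand $D\mathcal{G}^{n^*}(u)$ is confined to the tangent plane of $\Phi(\mathbb{S}_V)$ at $\mathcal{G}^{n^*}(p)$, which is within $\pi/6$ of the $xz$-plane, so its $x$- and $y$-components are a priori bounded in terms of its $z$-component; these two bounds together force $D\mathcal{G}^{n^*}(u)\in K_1^{u}(\mathcal{G}^{n^*}(p))$.

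The instant $n_*$ is the mirror image under the time-reversal symmetry \eqref{eq:sym}, which in the $\Phi$-coordinates interchanges the positive $y$- and $z$-axes and $\mathcal{G}$ with $\mathcal{G}^{-1}$. The same reasoning as in Lemma~\ref{lem:arb2} (in reverse time) shows that $\mathcal{G}^{-k}(\gamma_{V,\epsilon})\subset\hat M_{\delta^s,V}^{s}$ approaches the positive $y$-axis in the $C^{1}$ topology as $V\to 0$, so $u=D\mathcal{G}^{0}(u)$ lies in $K_{\tilde\epsilon}^{s}(p)$ with $\tilde\epsilon$ arbitrarily small; by Lemma~\ref{lem:passage-step-estimate}(3) the orbit $p,\dots,\mathcal{G}^{n_*}(p)$ stays in the region where the $y$-coordinate dominates, so Lemma~\ref{lem:rect}(3) keeps the tangent plane of $\Phi(\mathbb{S}_V)$ within $\pi/6$ of the $xy$-plane there; iterating $u$ forward $n_*$ steps multiplies $\norm{v_z}/\norm{v_y}$ by at most a bounded exponential of $n_*$, which the initial smallness of $\tilde\epsilon$ defeats, and the tangent-plane bound rules out a large $x$-component, giving $D\mathcal{G}^{n_*}(u)\in K_1^{s}(\mathcal{G}^{n_*}(p))$.

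The main obstacle is making the step ``the smallness of $\tilde\epsilon$ defeats the exponential in $n_*$ (resp.\ $k-n^*$)'' legitimate, since a priori both $n_*$ and $k-n^*$ grow as $V\to 0$ (Lemma~\ref{lem:passage-step-estimate}(5)--(6)): one must check that the rate at which Lemma~\ref{lem:arb2} and its reverse-time analogue push $\gamma_{V,\epsilon}$, $\mathcal{G}^{-k}(\gamma_{V,\epsilon})$ toward the coordinate axes in $C^{1}$ beats that growth, which in turn is governed by the uniform $C^{2}$-convergence of Proposition~\ref{prop:c2-continuity}. A cleaner route that avoids this quantitative race altogether is to run the near-axis parts of the argument with the $p$-dependent unstable cone field $K^{u}(p)$ of Proposition~\ref{prop:tangencies}(C5) --- whose opening angle tends to $\pi$ as $p$ approaches the $xy$-plane and which is genuinely $D\mathcal{G}$-invariant without any angular race --- and to recover the fixed cones $K_1^{s}$, $K_1^{u}$ only at the instants $n_*$ and $n^*$, where by Lemma~\ref{lem:rect}(3) the tangent plane already lies within $\pi/6$ of a coordinate plane and, by Lemma~\ref{lem:passage-step-estimate}(4), the orbit is only a bounded number of iterates from the wall $\{y=z\}$, so the tangent vector cannot yet have drifted toward the central $x$-direction.
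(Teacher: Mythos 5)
There is a genuine gap, and it is precisely the one you flag but do not resolve. Your strategy is to propagate cone membership along the orbit from the two ends (forward from $p$, backward from $\mathcal{G}^{k}(p)\in\gamma_{V,\epsilon}$), seeding it with the $C^1$-closeness of $\gamma_{V,\epsilon}$ to the $z$ axis (Lemma \ref{lem:arb2}) and a reverse-time analogue at the $y$ axis. But the propagation goes in the wrong direction for the cones involved: $K_1^u$ is invariant under forward iteration only, and pulling a vector back by $D\mathcal{G}^{-1}$ multiplies both the $y/z$ and the $x/z$ component ratios by factors bounded away from $1$ (approximately $\lambda^{2}$ and $\lambda$ near the origin), while $k-n^*$ and $n_*$ tend to infinity as $V\to 0$ by Lemma \ref{lem:passage-step-estimate}(5)--(6). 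Lemma \ref{lem:arb2} is purely qualitative (no rate), so the "smallness of $\tilde\epsilon$" cannot defeat an exponential in $k-n^*$; the race is not merely delicate, it is unwinnable as stated. Your fallback does not repair this: membership in the variable-aperture cone field $K^u(p)$ of (C5), whose opening tends to $\pi$ near the $xy$ plane, is too weak to recover $K_1^u$, and Lemma \ref{lem:rect}(3) constrains only the tangent plane of $\Phi(\mathbb{S}_V)$, not the direction of $D\mathcal{G}^{n^*}(u)$ inside that plane --- the vector could be nearly parallel to the central $x$ direction and still be tangent to the surface. Also, "only a bounded number of iterates from the wall" (Lemma \ref{lem:passage-step-estimate}(4)) does not limit the drift, since the drift accumulates over the unbounded stretch from time $k$ back to $n^*$, not over the bounded stretch near $\mathcal{N}$.

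The paper's proof avoids propagation altogether and argues statically at the instant $n^*$: the point $\mathcal{G}^{n^*}(p)$ lies on $\mathcal{G}^{n^*-k}(\gamma_{V,\epsilon})$, which is again a locus of tangencies, between a stable-type leaf that is arbitrarily close to horizontal (Proposition \ref{prop:c2-continuity}, since $\mathcal{G}^{n^*}(p)_z$ is small for $V$ close to zero) and a forward iterate of an unstable-foliation leaf whose $xz$-projection has second derivative larger than $\Delta$ (the curvature growth of Section \ref{sec:quadratic}, since $n^*$ is large). Claim \ref{claim:arb3} then forces the tangent to this tangency curve to project onto a nearly vertical direction in the $xz$ plane --- otherwise nearby tangency points of distinct unstable leaves with the same flat leaf would produce intersections of unstable manifolds --- and only then is Lemma \ref{lem:rect}(3) invoked, to convert "nearly vertical in the $xz$-projection, tangent to a surface whose tangent plane is within $\pi/6$ of the $xz$ plane" into membership in $K_1^u(\mathcal{G}^{n^*}(p))$; the $n_*$ case is symmetric. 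This exploitation of the tangency structure (flat leaf versus highly curved leaf) at the exit time itself is the missing idea: without it, naive cone propagation would, if anything, suggest the opposite conclusion at $n^*$ after many backward steps.
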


  \begin{proof}
  Let us prove that $D\mathcal{G}^{n^*}(u)\in K_1^u(\mathcal{G}^{n^*}(p))$. Since we are working under the assumption that for all $p\in\Phi(U\cap \mathbb{S}_V)\setminus\mathcal{N}$ with $p_z > p_y$, the tangent plane to $\Phi(\mathbb{S}_V)$ at $p$ makes the angle of at most $\frac{\pi}{6}$ radians with the $xz$ plane, there exists $\tilde{\epsilon} > 0$ such that if the orthogonal projection of $D\mathcal{G}^{n^*}(u)$ into the $xz$ plane makes the angle of at most $\tilde{\epsilon}$ radians with the $z$ axis, then $D\mathcal{G}^{n^*}(u)\in K_1^u(\mathcal{G}^{n^*}(p))$.

  Observe that the leaf of the stable foliation which intersects $\mathcal{G}^{n^*-k}(\gamma_{V, \epsilon})$ in the point $\mathcal{G}^{n^*}(p)$ can be ensured to be arbitrarily close to the horizontal in the $C^1$ topology, by ensuring that $\mathcal{G}^{n^*}(p)_z$ is sufficiently small (see Proposition \ref{prop:c2-continuity}), which can in turn be ensured for all $V < 0$ sufficiently close to zero. Also, the leaf of the unstable foliation that intersects $\mathcal{G}^{n^*-k}(\gamma_{V, \epsilon})$ in the point $\mathcal{G}^{n^*}(p)$, and is tangent to the leaf of the stable foliation at that point, can be ensured to have an arbitrarily large curvature in the point $\mathcal{G}^{n^*}(p)$ by ensuring that $n^*$ is sufficiently large, which can again be ensured for all $V < 0$ sufficiently close to zero (see Section \ref{sec:quadratic}). Thus, employing Claim \ref{claim:arb3} we can ensure, for all $V < 0$ sufficiently close to zero, that $D\mathcal{G}^{n^*}(u)$ projects onto a vector in the $xz$ plane that makes the angle of at most $\tilde{\epsilon}$ radians with the vertical.

  The case of $n_*$ and $K_1^u$ in place of $n^*$ and $K_1^s$, respectively, is handeled similarly.
  \end{proof}
  We can now conclude the proof of the proposition by employing Lemma \ref{lem:passage-step-estimate} together with Lemma \ref{lem:bdp}. Observe that for $p$ a point on the curve $\mathcal{G}^{-k}(\gamma_{V, \epsilon})$, the orbit $\set{p, \mathcal{G}(p),\dots, \mathcal{G}^k(p)}$ is broken into
  \begin{align*}
  \set{p, \mathcal{G}(p), \dots, \mathcal{G}^{n_*}(p)}  &\subset \Phi(\mathbb{S}_V\cap U)\setminus\mathcal{N}, \\
  \set{\mathcal{G}^{n^*}(p), \dots, \mathcal{G}^k(p)} &\subset \Phi(\mathbb{S}_V\cap U)\setminus\mathcal{N},
  \end{align*}
  and
  \begin{align*}
  \set{\mathcal{G}^{n_* + 1}(p), \dots, \mathcal{G}^{n^* - 1}(p)}\subset\mathcal{N}\cap \Phi(\mathbb{S}_V\cap U),
  \end{align*}
  such that, by Lemma \ref{lem:passage-step-estimate}, $n^* - n_*$ and $\abs{(k - n^*) - n_*}$ are bounded uniformly, independently of $V$ and $p$, although $k$ depends on $V$ (the $k$ here is of course different from the $k$ in the statement of Lemma \ref{lem:passage-step-estimate}). Now, by Lemma \ref{lem:bdp}, there exists a universal constant $D$, independent of $V$ and $p$, such that for $u$ as in the statement of Lemma \ref{lem:bdp-prep}, we have
  \begin{align*}
  \left(\frac{\norm{D\mathcal{G}^{k-n^*}(D\mathcal{G}^{n^*}(u))}}{\norm{D\mathcal{G}^{-n_*}(D\mathcal{G}^{n_*}(u))}}\right)^{\pm 1}\leq D\left(\frac{\norm{D\mathcal{G}^{n^*}(u)}}{\norm{D\mathcal{G}^{n_*}(u)}}\right)^{\pm 1},
  \end{align*}
  and obviously the right side of the above inequality is universally bounded since $n^* - n_*$ is. It follows that
  \begin{align*}
  \left(\frac{\norm{D\mathcal{G}^k(u)}}{\norm{u}}\right)^{\pm 1}=\norm{D\mathcal{G}^k(u)}^{\pm 1}
  \end{align*}
  is universally bounded. From this, we obtain a universal bound on
  \begin{align*}
  \left(\frac{\tau(\tilde{C}^u_{V, \epsilon})}{\tau(C^u_{V, \epsilon})}\right)^{\pm 1}.
  \end{align*}
  Thus, we indeed can choose $\tilde{\Delta} > 0$ sufficiently large, such that if $\tau(\tilde{C}^u_{V, \epsilon}) > \tilde{\Delta}$, then $\tau(C^u_{V, \epsilon}) > \Delta$.
  \end{proof}
  This completes Step (3).
\end{proof}

\begin{proof}[Step (4)]
This follows immediately from Step (3) and the Gap Lemma of S. Newhouse (see the proof of Lemma 4 in \cite{Newhouse1979}).
\end{proof}

\begin{proof}[Step (5)]
This follows from the previous steps, noting that there exist periodic points the stable and the unstable manifolds of which are dense in the stable and the unstable laminations, respectively (see \cite[Appendix 1]{Palis1993})
\end{proof}

\subsection{Proof of Theorem \ref{thm:main}, (5), (6), and (7)}\label{sec:proof5-7}
Recall that, given a diffeomorphism $f$ of a compact two-dimensional manifold and a hyperbolic saddle $P$ of $f$, the \emph{homoclinic class} $H(P, f)$ is defined to be the closure of the union of all the transversal homoclinic points of $P$. Also recall that $H(P, f)$ is $f$-invariant and transitive (see \cite{Gorodetski2012}).

The statements (5), (6) and (7) of Theorem \ref{thm:main} follow from \cite[Theorem 4]{Gorodetski2012}. Namely, as part of Theorem 4 in \cite{Gorodetski2012}, we have

\begin{thm}[Gorodetski 2012]\label{thm:Anton}
Let $f_0$ be a $C^\infty$ area preserving diffeomorphism of a compact two-dimensional manifold $M$ with an orbit $\mathcal{O}$ of quadratic homoclinic tangencies associated to some hyperbolic fixed point $P_0$, and $\set{f_\mu}$ be a generic unfolding of $f_0$ along the parameter $\mu$. Then for any $\delta > 0$ there is an open set $\mathcal{U}\subseteq \R^1$, $0\in\overline{\mathcal{U}}$, such that the following holds.

There is a residual subset $\mathcal{R}\subseteq \mathcal{U}$ such that for every $\mu\in\mathcal{R}$,

\begin{enumerate}\itemsep0.5em
\item the homoclinic class $H(P_\mu, f_\mu)$ is accumulated by $f_\mu$'s generic elliptic points,

\item the homoclinic class $H(P_\mu, f_\mu)$ contains hyperbolic sets of Hausdorff dimension arbitrarily close to 2; in particular, the Hausdorff dimension of $H(P_\mu, f_\mu)$ is equal to 2.

\end{enumerate}
\end{thm}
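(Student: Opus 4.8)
Since Theorem~\ref{thm:Anton} is quoted verbatim from Gorodetski's paper, the plan is to reconstruct the strategy of its proof, which merges Newhouse's and Duarte's conservative homoclinic-bifurcation theory with a Baire-category argument. The first step is a \emph{renormalization} near the orbit of tangency $\mathcal{O}$. Fixing the continuations $P_\mu$ and $W^{s,u}(P_\mu)$, and following the Palis--Takens scheme in its area-preserving form, for each large $N$ one composes the transition map along $\mathcal{O}$ with $f_\mu^{N}$ inside a small box near a point of $\mathcal{O}$, and finds affine rescalings $\Psi_{N,\mu}$ for which the conjugated maps converge in the $C^r$ topology, as $N\to\infty$, to the one-parameter conservative H\'enon-like family $\phi_a(x,y)=(y,-x+y^2+a)$ (up to a smooth coordinate change), with the rescaled parameter $a=a(N,\mu)$ sweeping a fixed interval as $\mu$ runs over an interval $I_N$ of length comparable to $\lambda^{-2N}$ ($\lambda$ the unstable multiplier of $P_0$) whose endpoints accumulate on $\mu=0$. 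Genericity of the unfolding gives $\partial a/\partial\mu\neq 0$, so $a$ is a genuine parameter; and since the rescalings are affine, area-preservation of $f_\mu$ passes to the rescaled maps, so the renormalized limit is area-preserving of H\'enon type. Because the $C^r$ distance to the limit is controlled by $1/N$, every feature of $\phi_a$ robust under $C^r$-small perturbation is inherited by the true return maps $f_\mu^{N}$ for $\mu$ in suitable small windows of $I_N$.

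The second step analyses $\phi_a$. Its fixed point is elliptic for $a$ in an open interval, and an arbitrarily small perturbation inside the conservative class turns it into a \emph{generic} elliptic point (non-degenerate Birkhoff normal form, no low-order resonance), which by Moser's invariant-curve theorem is a density point of invariant circles and, by Newhouse's theorem, is accumulated by further elliptic islands; pulling this structure back through $\Psi_{N,\mu}$ and the transition map places a generic elliptic point of $f_\mu$ arbitrarily close to $P_\mu$, hence arbitrarily close to $H(P_\mu,f_\mu)$. Separately, for suitable $a$ the map $\phi_a$ has a horseshoe, and since $\phi_a$ again carries quadratic homoclinic tangencies one may renormalize once more and iterate; the high return maps involved are strongly hyperbolic (expansion of order $\lambda^{N}$) and therefore nearly affine on the relevant scales, so this produces, for a dense set of parameters, sub-horseshoes of $H(P_\mu,f_\mu)$ whose \emph{stable and unstable} Cantor sets both have thickness exceeding any prescribed $\tau_0$ --- this is Duarte's mechanism, and these horseshoes stay inside $H(P_\mu,f_\mu)$ because the periodic points involved are homoclinically related to $P_\mu$. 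Since such a horseshoe is locally the product of its two Cantor sets, \eqref{eq:hdim-thickness} applied to each factor gives Hausdorff dimension at least $2\cdot\frac{\log 2}{\log(2+1/\tau_0)}$, which exceeds $2-\xi$ once $\tau_0$ is large. Iterating --- at stage $n$ the horseshoe $\Lambda^{(n)}$ again has quadratic tangencies, so a slightly larger $\Lambda^{(n+1)}\supseteq\Lambda^{(n)}$ of larger dimension is available --- yields a nested sequence with $\hdim\Lambda^{(n)}\to 2$.

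The third step is the category packaging. For $m\in\N$ let $\mathcal{V}_m\subseteq\mathcal{U}$ be the set of $\mu$ for which $f_\mu$ has a horseshoe inside $H(P_\mu,f_\mu)$ of Hausdorff dimension $>2-1/m$, and $\mathcal{W}_m\subseteq\mathcal{U}$ the set of $\mu$ for which every point of $H(P_\mu,f_\mu)$ lies within $1/m$ of a generic elliptic point of $f_\mu$. Both are open, since hyperbolic sets persist with continuous continuation and continuously varying Hausdorff dimension, and the defining conditions are open; both are dense, because $\mathcal{U}$ is a Newhouse domain (every parameter is approximated by one with a quadratic homoclinic tangency of $P_\mu$) and the second step then realizes near such a parameter either a large-dimension sub-horseshoe of $H(P_\mu,f_\mu)$ or a generic elliptic point near each of finitely many boxes covering $H(P_\mu,f_\mu)$. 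Then $\mathcal{R}=\bigcap_m(\mathcal{V}_m\cap\mathcal{W}_m)$ is a dense $G_\delta$ in $\mathcal{U}$: for $\mu\in\mathcal{R}$ assertion (2) is immediate, $\hdim H(P_\mu,f_\mu)=2$ since this set lies in a surface yet contains hyperbolic subsets of dimension tending to $2$, and (1) is the conclusion of the $\mathcal{W}_m$. Applied with $f_0$ the member $T_{V_*}|_{\mathbb{S}_{V_*}}$ of the trace-map family at a homoclinic-tangency parameter $V_*$ near $0$ (provided by part (4) of Theorem~\ref{thm:main}), a real-analytic area-preserving diffeomorphism of a two-sphere, and $\mu=V-V_*$, this yields parts (5), (6) and (7) of Theorem~\ref{thm:main}.

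The hard part is the second step: producing, \emph{at a single parameter}, sub-horseshoes of the homoclinic class with thickness large in \emph{both} invariant directions, while guaranteeing that for residually many parameters that same class is simultaneously accumulated by generic elliptic islands. This is where Duarte's conservative Newhouse theory and the uniform $C^r$ control of the renormalization do the real work; the renormalization setup and the Baire packaging are comparatively routine once these local realizations are secured.
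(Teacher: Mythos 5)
You are trying to prove something the paper never proves: Theorem~\ref{thm:Anton} is quoted verbatim as part of Theorem~4 of \cite{Gorodetski2012} and is used in Section~\ref{sec:proof5-7} purely as a black box (the only ``proof'' in the paper is the citation, followed by the observation that the fixed point may be replaced by a periodic point and the substitution $\mu=V$, $f_\mu=T_V$, $P_0=P_{V'}$). So there is no internal argument to compare yours against; what you have written is a reconstruction of Gorodetski's external proof. As such a reconstruction it is broadly faithful to the known strategy --- conservative renormalization near the tangency to the area-preserving H\'enon-like limit family (Mora--Romero/Duarte), creation of elliptic fixed points and of thick horseshoes homoclinically related to $P_\mu$ for suitable windows of the rescaled parameter, and a Baire-category packaging over a conservative Newhouse domain --- and your closing assessment of where the real work lies is accurate.

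Two caveats if you intend this as more than a roadmap. First, your set $\mathcal{W}_m$ (``every point of $H(P_\mu,f_\mu)$ lies within $1/m$ of a generic elliptic point'') is not obviously open in $\mu$: homoclinic classes are only lower semicontinuous, so under a small change of parameter the class can explode and acquire points far from any elliptic point. The standard repair, and what the argument you are paraphrasing actually does, is to formulate the open-and-dense conditions in terms of continuations of \emph{fixed} hyperbolic subsets (or of finitely many periodic points homoclinically related to $P_\mu$), whose dependence on $\mu$ is genuinely continuous, and only at the end pass to the closure $\overline{\bigcup_n\Lambda^{(n)}_\mu}$. Second, ``an arbitrarily small perturbation inside the conservative class turns it into a generic elliptic point'' must be realized by parameter selection inside the one-parameter family (possible because the rescaled parameter $a$ sweeps a full interval with $\partial a/\partial\mu\neq 0$ and the multiplier of the elliptic point moves with nonzero speed, so non-resonance and the twist condition hold off a finite set of $a$-values); as stated it reads as an appeal to arbitrary conservative perturbations, which the setting does not allow. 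With those repairs your outline matches the cited proof, but in the context of this paper the correct and complete justification of the statement is simply the reference to \cite{Gorodetski2012}.
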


Clearly the fixed point $P_0$ and its continuation $\set{P_\mu}$ can be replaced by a periodic point and, respectively, its continuation along the parameter $\mu$.

Now, in the notation of Theorem \ref{thm:Anton}, we can take $\mathcal{U} = (V_0, 0)$, where $V_0 < 0$ is sufficiently close to zero such that Theorem \ref{thm:main} (1)-(4) hold, $\mu = V$, $f_\mu = T_V$, and $P_0 = P_{V'}$, $V'\in (V_0, 0)$ (and its continuation $\set{P_V}_{V\in (V_0, 0)}$) a periodic point exhibiting a homoclinic tangency. Then Theorem \ref{thm:Anton} (1) gives Theorem \ref{thm:main} (5) and (7), and Theorem \ref{thm:Anton} (2) gives Theorem \ref{thm:main} (6).

\bibliographystyle{amsplain}
\bibliography{bibliography}

\end{document}